\pgfplotsset{compat=1.14}
\pgfplotsset{/pgf/number format/.cd,fixed,precision=4}
\newlength\fwidth 
\newlength\fheight
\def\R{\mathbb R}
\def\C{\mathbb C}\def\N{\mathbb N}
\def\phi{\varphi}
\def\Re{{\mathrm Re\,}}
\def\un{\mathbbm{1}}
\def\exp{\textrm{e}}
\newcommand{\dxh}{{\delta \! x}}
\newcommand{\dvh}{{\delta \! v}}
\newcommand{\dth}{{\delta t}}
\newcommand{\Pidx}{\mathsf{\Pi}_{\dxh}}
\newcommand{\Ld}{\mathcal L_\delta}
\newcommand{\bh}{\mathsf{b}_\dxh}
\newcommand{\eh}{\mathsf{e}}
\newcommand{\Mh}{\mathsf{M}}
\newcommand{\Nh}{\mathsf{N}}
\newcommand{\rh}{r_{\dxh}}
\newcommand{\vh}{\mathsf{v}}
\newcommand{\wh}{\mathsf{w}}
\newcommand{\Wh}{\mathsf{W}}
\newcommand{\Id}{\mathsf{Id}}
\newcommand{\Pd}{{\mathsf{P}_\delta}}
\newcommand{\dd}{{\rm d}}
\newtheorem{theorem}{Theorem}[section]
\newtheorem{property}[theorem]{Property}
\newtheorem{corollary}[theorem]{Corollary}
\newtheorem{proposition}[theorem]{Proposition}
\newtheorem{lemma}[theorem]{Lemma}
\newtheorem{remark}[theorem]{Remark}
\newtheorem{definition}[theorem]{Definition}
  \author[1]{Guillaume Dujardin}
  \author[2]{Pauline Lafitte} 
 \affil[1]{Univ. Lille, INRIA, CNRS, UMR 8524 
 - Laboratoire Paul Painlev{\'e} F-59000
  \tt{guillaume.dujardin@inria.fr}}
 \affil[2]{F\'ed\'eration de Math\'ematiques FR3487, Laboratoire MICS,
  CentraleSup\'elec, 3 rue Joliot-Curie, 91190
   Gif-sur-Yvette \tt{pauline.lafitte@centralesupelec.fr}
  } 
\title{Uniform estimates for a fully discrete scheme integrating the linear heat
  equation on a bounded interval with pure Neumann boundary conditions
  % \thanks{G.D. is supported by the Labex CEMPI (ANR-11-LABX-0007-01).}
}
\begin{document} 
\Opensolutionfile{preuves}
\maketitle 

\begin{abstract}
  This manuscript deals with the analysis of numerical methods for the full discretization
  (in time and space) of the linear heat equation with Neumann boundary conditions,
  and it provides the reader with error estimates that are uniform in time.
  First, we consider the homogeneous equation with homogeneous Neumann boundary conditions
  over a finite interval.
  Using finite differences in space and the Euler method in time,
  we prove that our method is of order 1 in space, uniformly in time, under a classical CFL condition,
  and despite its lack of consistency at the boundaries.
  Second, we consider the nonhomogeneous equation with nonhomogeneous Neumann boundary conditions
  over a finite interval.
  Using a tailored similar scheme,  we prove that our method is also of order 1 in space,
  uniformly in time, under a classical CFL condition.
  We indicate how this numerical method allows for a new way to compute steady states of such equations
  when they exist.
  We conclude by several numerical experiments to illustrate the sharpness and relevance of our
  theoretical results, as well as to examine situations that do not meet the hypotheses of our
  theoretical results, and to illustrate how our results extend to higher dimensions.
\end{abstract}

%\subclass{35Q83; 35Q84;35B40} 
%\keywords{kinetic equations, numerics, return to equilibrium}

%\tableofcontents

\section{Introduction}

%{\color{blue} \'Ecrire ici une motivation de l'article + une description de son contenu.}

%{\color{blue} Bien insister sur les estimations d'ordre {\it uniformes en temps}}
%{\color{red} Done 2022 10 14}

%{\color{blue} Dire pourquoi on s'en tient \`a un truc sym\'etrique, m\^eme si pas consistant.}

This article deals with the numerical integration of the classical linear heat equation in dimension 1,
over a finite interval with homogeneous Neumann boundary conditions,
and is concerned with proving {\it uniform in time} order estimates.
The use of homogeneous Neumann boundary conditions is usual,
for example when one wants to truncate an infinite domain (say, $\R$)
to a finite one (say, a bounded interval) and one wants to ``allow the solution to get out''
(see for example \cite{ILR20} for the transport equation and references therein).
Precisely, this article deals with methods using finite differences in space.
This approach of using finite differences in space for solving linear PDEs in long time
on bounded domains is simple and hence popular (see \cite{KPY04}
for the wave equation and \cite{ILR20} for the
transport equation), even though the treatment of the boundary condition raises delicate questions
and sometimes unexpected behaviors.
For example, it is well-known that, for the heat equation,
the most naive classical finite-difference approach
in space lacks consistency at the boundary of the domain \cite{leveque2007finite}.
Note however that, in this case, the associated matrix is symmetric and hence allows to carry out
spectral analysis.
A possible approach to circumvent this inconsistency issue is to use a modified finite-difference
matrix on the first or last lines (as in \cite{leveque2007finite} pages 21-23).
This solution produces an order 1 scheme in space for the stationary problem
that involves a nonsymmetric matrix.
Another approach to circumvent this lack of consistency, developed in \cite{thomas1995numerical}
(see page 15), consists in introducing a ghost point.
Once again, this last approach yields a nonsymmetric finite-difference matrix.

In this paper, we consider the initial approach, with a symmetric matrix,
and the associated time-dependent heat equation.
For this discretization in space, and the explicit Euler method in time,
despite this lack of consistency of the scheme at the boundary that we quantify,
we manage to prove that the scheme has order $\mathcal{O}(\dxh)$ {\it uniformly in time},
under the classical CFL condition.
To this end, we perform a thorough spectral analysis allowed by the symmetry of the finite-difference
matrix.
This is the main result of this paper and details are provided in Theorem \ref{th:mainresult}.
The proof relies on the one hand on a precise estimate of a consistency operator,
and on the other hand on a precise control of the evolution of the numerical
error. We use a discrete Gronwall lemma and both discrete and continuous coercivity estimates,
that establish some uniform-in-time stability.
This strategy is similar to the one used to prove the standard Lax theorem,
which states that consistency and stability imply convergence.
The originality of this paper is that we manage to carry out the analysis for all times,
and we obtain error estimates that are {\it uniform in time}.
To the best of our knowledge, this is the first ever uniform-in-time error analysis
result for the heat equation with Neumann boundary conditions.

We also extend this analysis to address the {\it nonhomogeneous} linear heat equation
(with a given source term and given fluxes at the boundary of the line segment).
This provides a way to compute steady states of the heat equation with pure Neumann
boundary conditions.
The numerical approximation of the continuous operator in this context
and its analysis arise {\it e.g. } in control problems (see for example \cite{DNS18},
\cite{DEGGH18} and references therein).
For the heat equation, the existence of such steady states is submitted to a compatibility condition
between the heat fluxes at the boundary of the domain and the source term.
When this compatibility condition is fulfilled, the direct computation of a steady state
is known to be an ill-posed problem, because the continuous operator is nonnegative and self-adjoint
with a nontrivial kernel.
In this context, our method for the time integration of the time dependent heat equation
can be seen as an iterative method to solve the corresponding discrete noninvertible linear problem.
Our main result in this direction is Theorem \ref{th:steadystateuniform}.
Solving this kind of problems numerically has a long history and may involve
algebraic as well as variational formulations of the problem \cite{BL05}.
Other approaches use a Monte--Carlo formulation {\it via} a stochastic representation
of the solution \cite{MT13}.

Most results about the convergence of numerical schemes for parabolic problems deal
with finite time horizons (see for example
a finite element methods for nonlinear heat equations
with Dirichlet boundary conditions in \cite{Stig89},
a Schwarz waveform relaxation method for the linear heat equation in \cite{HM23},
or numerical methods for fractional heat equations with non smooth data in \cite{JLZ19}).
Some results about numerical schemes for parabolic problems
deal with the asymptotic behaviour of the schemes in time, but
the question of the order of the scheme for all times is usually not addressed
(see for example the long-time analysis of numerical methods for linear advection diffusion equations,
using finite volume discretization for Dirichlet and Neumann boundary conditions in \cite{CHLM22}).
When the convergence of the scheme is addressed uniformly in time,
it usually often for problems with Dirichlet boundary conditions and for weak
(or weak-star) topologies.
For example, in \cite{Shen90}, the author considers two nonlinear Galerkin methods
for the nonlinear Navier--Stokes system in a bounded domain $\Omega$ of $\R^2$ with homogeneous
Dirichlet boundary conditions, and obtains convergence in $L^2(0,T,H)$
(where $H$ is an appropriate subspace of $L^2(\Omega)^2$) for any {\it finite} time horizon $T>0$
and weak-star convergence in $L^\infty(\R^+,H)$.
In contrast, our aim is to obtain {\it uniform in time} strong estimates,
to prove convergence and {\it uniform order} of our scheme, and to describe how one can
handle Neumann boundary conditions using a discretization with finite differences that
lacks consistency at the boundary of the domain.
Some authors addressed a similar question of obtaining {\it uniform} convergence in strong topologies
as well as {\it uniform} order estimates. For example, the present case of the nonlinear
heat equation on a bounded interval with Dirichlet boundary condition using the forward Euler
method in time and finite differences in space has been analyzed in \cite{SZS92}.
In some sense, the sections \ref{sec:heatequation} and \ref{sec:erroranalysis} of this paper
are the analogue of the case $f\equiv 0$ in \cite{SZS92} to the case of Neumann boundary conditions.
%For the sake of simplicity, we consider a {\it linear} equation, on a finite open bounded interval
%in dimension $1$.
Note that, anyway, frameworks for the longtime nonlinear analysis of schemes for parabolic
equations exist (see for example \cite{wu1999stability}).
However, they do not allow for the analysis of the scheme introduced in this paper
(see Section \ref{subsec:comparaisonWu}),
because of the way the Neumann conditions are discretized
(lack of consistency with the Laplace operator at the boundary).
Other authors proved convergence results for parabolic problems in the context
of a data assimilation algorithm \cite{MT18} \cite{IMT20} for the Navier--Stokes equation
in dimension 2.
In contrast, in this paper, we focus on the classical linear heat equation with Neumann
boundary conditions on a bounded interval,
we consider a fully discrete scheme based on finite differences,
and we prove order estimates that are {\it uniform in time} for homogeneous as well as
nonhomogeneous problems.

The main reason we address this classical problem is that it appears to be a
very simplified version of the problem of the time integration of the linear Fokker--Planck equation
(see for example \cite{DHL2020}).
In this setting, when the Fokker--Planck equation is homogeneous-in-space,
the operator is symmetric nonnegative (with a nontrivial kernel), and coercivity
estimates are a crucial tool to prove exponential convergence towards equilibrium,
in the continuous and discrete settings.
Moreover, this symmetric operator also lacks consistency at the boundary of the velocity domain.
For this reason, we wish to consider discrete linear schemes
with symmetric matrices for the operators in the linear continuous equation that are self-adjoint.
For the Fokker--Planck equation, even in the homogeneous-in-space case,
the analysis of the uniform order in time is still an open problem.
The linear heat equation in this paper therefore serves as a toy model for this problem.
Even if the analysis is carried out in dimension $1$, our results extend to higher dimensions.

The outline of this paper is as follows.
Section \ref{sec:heatequation} is devoted to the introduction of the problem
and the statement of the main result (Theorem \ref{th:mainresult}).
Section \ref{sec:erroranalysis} deals with
the estimation of the errors in time, and the proof of the main Theorem.
Section \ref{sec:computationsteadystate} presents an application of this result
to the computation of the steady state for nonhomogeneous Neumann problems.
Numerical experiments are provided in Section \ref{sec:num} to show the efficiency
and optimality of the theoretical results of the previous sections, and to illustrate
the validity of these results in higher dimensions.
This article ends with an appendix containing technical lemmas and a discussion about
the possible generalization of the method to the discretization of the linear homogeneous
Fokker--Planck equation with Neumann boundary conditions.

\section{Setting of the problem and main results}
\label{sec:heatequation}

\subsection{The continuous linear heat equation with homogeneous Neumann boundary
  conditions}

We consider the solution $u=u(t,x)$ to the problem
\begin{equation}
  \label{eq:heat}
  \left\{
    \begin{matrix}
      \partial_t u(t,x) & = & P u(t,x)\\
      u(0,x) & = & u^0(x)
    \end{matrix},
  \right. 
\end{equation}
where $P=\partial_x^2$ is the Laplace operator over $(0,L)$
with homogeneous Neumann boundary conditions at $x=0$ and $x=L$, for some $L>0$.
The function $u^0\in L^2(0,L)$ is some given initial datum, possibly smoother.
We decompose the unknown solution $u(t,\cdot)$ of \eqref{eq:heat} at time $t>0$
onto the classical orthonormal basis of $L^2(0,L)$ associated to $P$ for the scalar product
\begin{equation}
  \label{eq:prodscalL2}
  \langle u,v\rangle=\frac{1}{L}\int_0^L u(x) v(x) \dd x,
\end{equation}
in the form
\begin{equation}
  \label{eq:solexheat}
  u(t,x) = \sum_{p=0}^{+\infty} \alpha_p {\rm e}^{-p^2\frac{\pi^2}{L^2}t} c_p(x),
\end{equation}
for $t\geq 0$ and $x\in (0,L)$, with
\begin{equation}
  \label{eq:baseONcos}
  c_p(x) = \sqrt{2}\cos\left(p\frac{\pi}{L}x\right),\,\,p\geq 1, \qquad {\rm
    and}\qquad c_0(x) = 1,
\end{equation}
 and  $\alpha_p = \langle u^0,c_p\rangle$ for $p\geq 0$.

We denote by $\un_{[0,L]}$ the constant function equal to $1$ over $[0,L]$.
We denote the norm associated to the scalar product \eqref{eq:prodscalL2} by $\|\cdot \|_{L^2}$
and the mean value of any function $v\in L^2(0,L)$ by 
\begin{equation}
  \label{eq:valeurmoyennecontinue}
\langle v \rangle =  \langle v, \un_{[0,L]}\rangle = \frac{1}{L} \int_0^L v(x)\dd x.
\end{equation}

\subsection{Decay properties of the solutions of \eqref{eq:heat}}
\label{subsec:decayheathomog}

A classical result for the solutions to the linear heat equation \eqref{eq:heat}
is the following
\begin{property}
  \label{prop:decrchaleur}
  Assume $u^0\in L^2(0,L)$. The corresponding solution $u$ to \eqref{eq:heat} satisfies
  \begin{equation*}
    \forall t\geq 0,\qquad
    \|u(t,\cdot)-\langle u^0\rangle \un_{[0,L]}\|_{L^2} \leq {\rm e}^{-\frac{\pi^2}{L^2} t}\|u^0\|_{L^2}.
  \end{equation*}
\end{property}

\begin{proof}
  Using twice the fact that the functions $(c_p)_{p\geq 0}$ are an orthonormal Hilbert
  basis of $L^2(0,L)$, we have, with the notations introduced above, for all $t\geq 0$,
  \begin{equation*}
    \left\|u(t,\cdot)-\langle u^0\rangle \un_{[0,L]}\right \|_{L^2}^2  =  \sum_{p=1}^{+\infty}
                                            \left(\alpha_p\, {\rm e}^{-p^2\frac{\pi^2}{L^2}t} \right)^2
                                      \leq  {\rm e}^{-2 \frac{\pi^2}{L^2}t} \sum_{p=1}^{+\infty} \alpha_p ^2\\
    \leq {\rm e}^{-2 \frac{\pi^2}{L^2}t} \left\|u^0\right\|_{L^2}^2.
  \end{equation*}  
\end{proof}

Assuming extra smoothness on the initial datum $u^0$, this decay property also holds for
$x$-derivatives of the exact solution to the linear equation \eqref{eq:heat}, as is stated,
for example, in Property \ref{prop:debutalternatif}.

  \begin{property}[Domain of the operator $P$]
    The domain ${\rm Dom}(P)$ of the operator $P$ is the set of functions $u\in L^2(0,L)$
    such that $\partial_x^2 u\in L^2(0,L)$ and $\partial_x u (0) = \partial_x u (L)=0$.
    Observe in particular that ${\rm Dom} (P)\subset H^2(0,L)$.
  \end{property}

  Throughout the paper, we assume the following on the initial datum $u^0$:
    \begin{equation}
      \label{eq:hypotheseu0}
      u^0\in {\rm Dom}(P),\qquad
      P u^0\in {\rm Dom}(P),\qquad
      P^2 u^0 \in {\rm Dom}(P).
    \end{equation}

    \begin{remark}
      The hypothesis \eqref{eq:hypotheseu0} is for example fulfilled for example in either
      of the following cases:
      \begin{itemize}
      \item $u^0$ is $\mathcal C^\infty$ over $(0,L)$ with compact support,
      \item $u^0$ is the restriction to $[0,L]$ of an
        even $2L$-periodic function of class $\mathcal C^6$ over $\R$.
        
      \end{itemize}
    \end{remark}

    \begin{remark}
      \label{rem:domainu0}
      Assume that $u^0\in L^2(0,L)$.
      Then, for all $t>0$, the corresponding solution $u(t)={\rm e}^{-t P} u^0$ of \eqref{eq:heat}
      at time $t$ defined in \eqref{eq:solexheat} satisfies the hypothesis \eqref{eq:hypotheseu0}.
      In particular, if $u^0\in L^2(0,L)$ does not satisfy the hypothesis \eqref{eq:hypotheseu0},
      then it does instantaneously after $t=0$.
      The numerical long-time analysis presented below can surely be adapted
      using this remark to suppress the hypothesis \eqref{eq:hypotheseu0}
      and replace it with the simple hypothesis that
      $u^0\in L^2(0,L)$ (see for example Remark \ref{rem:utildemoinsutildeinfini}).
      We shall not do this here for the sake of brevity.
      However, we illustrate this fact numerically in Section \ref{sec:num}.
    \end{remark}

    \begin{property}
      \label{prop:uestreguliereentemps}
      If $u^0$ satisfies \eqref{eq:hypotheseu0}, then the corresponding solution $u$ of \eqref{eq:heat}
      defined in \eqref{eq:solexheat} is in $\mathcal C^2([0;+\infty[,L^2(0,L))$.
    \end{property}
  
  \begin{property}
  \label{prop:debutalternatif}
  Assume that $u^0$ satisfies \eqref{eq:hypotheseu0}.
  Then, the corresponding solution $u$ to \eqref{eq:heat} obtained by \eqref{eq:solexheat}
  satisfies
\begin{equation}
  \label{eq:estimerrorcontinuousheat}
  \forall t\geq 0,\qquad \left\|P^2u(t)\right\|_{L^2}
  \leq {\rm e}^{-\frac{\pi^2}{L^2}t} \left\|P^2 u^0\right\|_{L^2},
\end{equation}
and
\begin{equation}
  \label{eq:estimerrorcontinuousheatdx}
  \forall t\geq 0,\qquad \left\|\partial_x P^2u(t)\right\|_{L^2}
  \leq {\rm e}^{-\frac{\pi^2}{L^2}t} \left\|\partial_x P^2 u^0\right\|_{L^2}.
\end{equation}
\end{property}

\begin{proof}
  Since $P^2 u^0 \in {\rm Dom}(P)$, we have that $t\mapsto P^2u(t)$ is continuous
  from $[0;+\infty)$ to $L^2(0,L)$. Since it has zero mean value and solves the homogeneous
  linear heat equation with homogeneous Neumann boundary condition, with initial datum
  $P^2 u^0\in L^2(0,L)$ with zero mean value (since $P u^0\in {\rm Dom}(P)$), we infer that
  \eqref{eq:estimerrorcontinuousheat} holds, using Property \ref{prop:decrchaleur}.

  Moreover, the function $t\mapsto \partial_x^5 u(t) $ is continuous over $[0,+\infty)$
  with values in $L^2(0,L)$, and solves the linear homogeneous heat equation
  with homogeneous Dirichlet boundary condition,
  and initial datum $\partial_x^5 u^0 \in L^2(0,L)$ (which vanishes at $0$ and $L$
  since $P^2 u^0 \in {\rm Dom}(P)$).
  We infer that \eqref{eq:estimerrorcontinuousheatdx} holds.
\end{proof}
  
\subsection{The discretized problem}

We consider the discretization of the Laplace operator with homogeneous Neumann boundary conditions
on $(0,L)$ that appears in \eqref{eq:heat}.
To do so, we set $J\geq 2$ and define $x_j=j\dxh$ for $0\leq j\leq J-1$
and $\dxh = L/(J-1)$.
We equip $\R^J$ with the inner product
\begin{equation}
  \label{eq:prodscalRJ} 
  \langle \vh,\wh \rangle_\delta = \frac{1}{J}\sum_{j=0}^{J-1}\vh_j\wh_j, \qquad (\vh,\wh)\in \R^J\times \R^J,
\end{equation}
which is a discrete analogue to \eqref{eq:prodscalL2}, and denote by $\|\cdot\|_{\ell^2}$ the
associated norm.
And, similarly to \eqref{eq:valeurmoyennecontinue}, denoting $\un=(1,\hdots,1)^\top$, we set
\begin{equation}
  \label{eq:moyenneRJ}
  \langle \vh \rangle_\delta = \langle\vh,\un\rangle_\delta = \frac{1}{J}\sum_{j=0}^{J-1}\vh_j,
\end{equation}
for all $\vh\in\R^J$.
We can think of $\vh_j$ as an approximation of some smooth function $v$ on $[0,L]$ at point $x_j$.
We introduce the classical matrix
\begin{equation*}
  \Pd = \frac{1}{\dxh^2}
  \begin{pmatrix}
    -1     & 1      & 0      & 0      & \hdots & 0\\
    1      & -2     & 1      & 0      & \hdots & 0\\
    0      & 1      & -2     & 1      & \ddots & \vdots\\
    \vdots & \ddots & \ddots & \ddots & \ddots & 0\\
    0      & \hdots & 0      & 1      & -2     & 1\\
    0      & \hdots & \hdots & 0      & 1      & -1\\
  \end{pmatrix}.
\end{equation*}
% so that the square matrix $\Pd$ of size $J$ is an approximation of the Laplace operator
% with Neumann homogeneous boun\-da\-ry conditions over $(0,L)$.
It is a real symmetric matrix and $\Pd$ has $J$ simple real eigenvalues
which read
\begin{equation}
  \label{eq:lambdaell}
  \lambda_\ell = - \frac{4}{\dxh^2} \sin^2\left(\frac{\ell\pi}{2J}\right),\qquad 0\leq \ell\leq J-1.
\end{equation}
The eigenspace for the eigenvalue $\lambda_0=0$ is generated by the vector $\un$.
See Section \ref{subsec:spectralPdelta} for more on the spectral analysis of $\Pd$.

For $\dth>0$ we consider the explicit fully discrete iterative scheme
\begin{equation}
  \label{eq:discrheat}
  \left\{
    \begin{matrix}
      \vh^{n+1} & = & (\Id+\dth\Pd)\vh^n\\
      \vh^0 &  & \text{given in }\R^J
    \end{matrix},
  \right.
\end{equation}
which is a discrete analogue of \eqref{eq:heat}.
The classical CFL condition of this scheme (see Proposition \ref{prop:stabCFLevident}) reads
\begin{equation}
  \label{eq:CFLheat}
  \frac{\dth}{\dxh^2}\leq \frac12.
\end{equation}

Note that \eqref{eq:discrheat} is a fully discrete analogue of \eqref{eq:heat},
where $P$ is discretized by $\Pd$ in space and the explicit Euler method is used in time.
We can think of $\vh^0$ as a discrete approximation of $u^0$.
Let us denote by $\|\cdot\|_{\ell^2}$ the norm associated to the scalar product defined in
\eqref{eq:prodscalRJ}.
We have the following classical stability result.
\begin{proposition}
  \label{prop:stabCFLevident}
  Assume $J\geq 2$ and $\dth>0$ are such that the CFL condition \eqref{eq:CFLheat} is fulfilled.
  We have
  \begin{equation}
  \label{eq:stabheat}
  \rho((\Id+\dth \Pd))= 1
  \qquad \text{and} \qquad \forall \vh\in\R^J,\quad
  \|({\rm Id+\dth\Pd})\vh\|_{\ell^2}\leq\|\vh\|_{\ell^2},
\end{equation}
where $\rho$ denotes the spectral radius.
\end{proposition}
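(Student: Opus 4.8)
The plan is to exploit the spectral decomposition of the symmetric matrix $\Pd$, which is already available through \eqref{eq:lambdaell}. Since $\Pd$ is real symmetric, there is an orthonormal basis $(\varphi_\ell)_{0\le \ell\le J-1}$ of $\R^J$ (for the inner product $\langle\cdot,\cdot\rangle_\delta$) consisting of eigenvectors, with $\Pd\varphi_\ell=\lambda_\ell\varphi_\ell$. Then $\Id+\dth\Pd$ has the same eigenvectors, with eigenvalues $1+\dth\lambda_\ell$, so $\rho(\Id+\dth\Pd)=\max_{0\le\ell\le J-1}|1+\dth\lambda_\ell|$. The key is to bound these quantities under the CFL condition \eqref{eq:CFLheat}.

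First I would observe from \eqref{eq:lambdaell} that $-\tfrac{4}{\dxh^2}\le \lambda_\ell\le 0$, since $0\le\sin^2(\ell\pi/(2J))\le 1$. Hence $1+\dth\lambda_\ell\le 1$ for every $\ell$, and, using the CFL condition $\dth/\dxh^2\le 1/2$,
\begin{equation*}
  1+\dth\lambda_\ell \ge 1 - \frac{4\dth}{\dxh^2} \ge 1 - 2 = -1.
\end{equation*}
Therefore $|1+\dth\lambda_\ell|\le 1$ for all $\ell$, giving $\rho(\Id+\dth\Pd)\le 1$. For the reverse inequality, note that $\lambda_0=0$, so $1+\dth\lambda_0=1$ is an eigenvalue, whence $\rho(\Id+\dth\Pd)\ge 1$. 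This proves the first identity in \eqref{eq:stabheat}.

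For the norm estimate, I would decompose an arbitrary $\vh\in\R^J$ as $\vh=\sum_{\ell=0}^{J-1}\beta_\ell\varphi_\ell$ with $\beta_\ell=\langle\vh,\varphi_\ell\rangle_\delta$, so that by orthonormality $\|\vh\|_{\ell^2}^2=\sum_\ell\beta_\ell^2$ and $(\Id+\dth\Pd)\vh=\sum_\ell(1+\dth\lambda_\ell)\beta_\ell\varphi_\ell$, hence
\begin{equation*}
  \|(\Id+\dth\Pd)\vh\|_{\ell^2}^2 = \sum_{\ell=0}^{J-1}(1+\dth\lambda_\ell)^2\beta_\ell^2
  \le \sum_{\ell=0}^{J-1}\beta_\ell^2 = \|\vh\|_{\ell^2}^2,
\end{equation*}
using the bound $|1+\dth\lambda_\ell|\le 1$ just established. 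Taking square roots concludes the proof.

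There is essentially no hard obstacle here: the only point requiring a little care is verifying that the CFL condition is exactly what is needed to keep $1+\dth\lambda_\ell\ge -1$ (the lower bound is the binding one, since the upper bound $1+\dth\lambda_\ell\le 1$ is automatic from $\lambda_\ell\le 0$), and that equality $\rho=1$ is attained thanks to the kernel eigenvalue $\lambda_0=0$ generated by $\un$. If one wished to avoid invoking the explicit formula \eqref{eq:lambdaell}, one could instead argue directly that $-\Pd$ is positive semidefinite with $\langle-\Pd\vh,\vh\rangle_\delta=\tfrac{1}{J\dxh^2}\sum_{j=0}^{J-2}(\vh_{j+1}-\vh_j)^2$ and that $\|\Pd\|\le 4/\dxh^2$ by Gershgorin's theorem, which yields the same bounds; but given that the eigenvalues are already displayed, the spectral argument above is the most direct.
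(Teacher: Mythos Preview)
Your proof is correct and follows essentially the same approach as the paper: use the explicit eigenvalues \eqref{eq:lambdaell} together with the CFL condition to get $|1+\dth\lambda_\ell|\le 1$, invoke $\lambda_0=0$ for the equality $\rho=1$, and use the symmetry of $\Pd$ (orthonormal eigenbasis) to pass from the spectral radius to the $\ell^2$-norm bound. The paper's version is simply more terse.
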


\begin{proof}
  The CFL condition \eqref{eq:CFLheat} ensures that the eigenvalues of $\Pd$ satisfy
  (see Equation \ref{eq:lambdaell})
  \begin{equation*}
    \forall \ell\in\{0,\cdots,J-1\},\qquad |1+\dth\lambda_\ell|\leq 1.
  \end{equation*}
  This implies that $\rho(\Id+\dth\Pd)=1$ (recall that $\lambda_0=0$).
  The fact that, additionally, $\Pd$ is symmetric
  and has eigenvectors forming an orthogonal basis of $\R^J$ yields \eqref{eq:stabheat}.
\end{proof}

We make repeated use of the existence of a spectral gap for the operator $\Pd$,
which is described in the following proposition.

\begin{proposition}
  \label{prop:defeta}
  Assume $L>0$ is fixed, $J\geq 2$ and $\dth>0$ are given.
  We set, using the eigenvalues $(\lambda_\ell)_{0\leq \ell\leq J-1}$ of $\Pd$
  defined in \eqref{eq:lambdaell},
  \begin{equation}
  \label{eq:defeta}
  \eta = \max_{1\leq \ell\leq J-1}\left|1+\dth \lambda_\ell\right|.
  \end{equation}
Assuming that the CFL condition \eqref{eq:CFLheat} is fulfilled, we have
\begin{equation}
  \label{eq:etapluspetitque1}
  0<\eta %= 1-4\frac{\dth}{\dxh^2} \sin^2 \left(\frac{\pi}{2J}\right)
  <1.
\end{equation}
\end{proposition}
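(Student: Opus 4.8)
The plan is to analyze directly the quantity $\eta = \max_{1\le \ell\le J-1}|1+\dth\lambda_\ell|$ by using the explicit formula \eqref{eq:lambdaell} for the eigenvalues. First I would record that, since $\ell$ ranges over $\{1,\dots,J-1\}$, the argument $\frac{\ell\pi}{2J}$ lies in the open interval $\bigl(0,\frac{(J-1)\pi}{2J}\bigr)\subset\bigl(0,\frac\pi2\bigr)$, so that $\sin^2\bigl(\frac{\ell\pi}{2J}\bigr)>0$ strictly for each such $\ell$; hence $\lambda_\ell<0$ strictly for $1\le\ell\le J-1$. This immediately gives $1+\dth\lambda_\ell<1$ for each such $\ell$, and therefore $\eta<1$ once we also know $\eta$ is attained (which it is, being a maximum over a finite set), provided we can also rule out the other way the absolute value could be large, namely $1+\dth\lambda_\ell\le -1$.

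The lower bound $\eta>0$ is the easy half: since $J\ge 2$, the index $\ell=1$ is available, and $1+\dth\lambda_1$ is a real number; if it were zero for every admissible $\ell$ the max would be zero, but in any case $\eta\ge |1+\dth\lambda_1|$, and more simply $\eta\ge 0$ with equality only if $1+\dth\lambda_\ell=0$ for all $\ell$, which cannot happen since the $\lambda_\ell$ are distinct (they are $J$ simple eigenvalues) — so at most one of them can satisfy $1+\dth\lambda_\ell=0$. Thus $\eta>0$. Actually the cleanest route is: $\eta>0$ because $\eta$ is a maximum of absolute values over a nonempty set and not all the numbers $1+\dth\lambda_\ell$ can vanish simultaneously.

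For the strict upper bound $\eta<1$, I would argue as follows. By the CFL condition \eqref{eq:CFLheat} we have $\dth/\dxh^2\le \tfrac12$, hence for every $\ell$,
\begin{equation*}
  1+\dth\lambda_\ell = 1 - \frac{4\dth}{\dxh^2}\sin^2\!\Bigl(\frac{\ell\pi}{2J}\Bigr)
  \ge 1 - 4\cdot\frac12\cdot 1 = -1,
\end{equation*}
so $1+\dth\lambda_\ell\in[-1,1)$ for $1\le\ell\le J-1$, the strict inequality on the right coming from $\sin^2\bigl(\frac{\ell\pi}{2J}\bigr)>0$ and $\dth>0$. Therefore $|1+\dth\lambda_\ell|<1$ unless $1+\dth\lambda_\ell=-1$, i.e. unless equality holds throughout, which forces simultaneously $\dth/\dxh^2=\tfrac12$ and $\sin^2\bigl(\frac{\ell\pi}{2J}\bigr)=1$, that is $\ell = J$. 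But $\ell\le J-1<J$, so this case never occurs, and we get $|1+\dth\lambda_\ell|<1$ for every $\ell\in\{1,\dots,J-1\}$; taking the maximum over this finite set preserves the strict inequality, giving $\eta<1$.

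There is essentially no hard part here — the proposition is an elementary consequence of the explicit spectrum of $\Pd$ together with the CFL bound. The only point that requires a moment's care is to notice that the potentially dangerous endpoint $\sin^2\bigl(\frac{\ell\pi}{2J}\bigr)=1$ (which would correspond to $\lambda_\ell = -4/\dxh^2$ and hence, at the CFL equality, to $1+\dth\lambda_\ell=-1$) is excluded precisely because the maximal eigenvalue index is $J-1$ and not $J$; this is the structural role of the Neumann discretization $\Pd$ (whose spectrum stops one step short of the symmetric tridiagonal matrix's would-be extremal mode), and it is what makes the spectral gap $\eta<1$ hold even at the boundary of the CFL region. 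I would state the argument in the two displays above and conclude.
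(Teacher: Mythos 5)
Your argument is correct and essentially identical to the paper's proof: both use the explicit spectrum \eqref{eq:lambdaell}, the strict bounds $0<\sin^2\left(\ell\pi/(2J)\right)<1$ for $1\leq \ell\leq J-1$, and the CFL condition \eqref{eq:CFLheat} to obtain $-1<1+\dth\lambda_\ell<1$, hence $\eta<1$, and both deduce $\eta>0$ from the fact that the numbers $1+\dth\lambda_\ell$ cannot all vanish (distinctness of the eigenvalues). The only caveat, which you share with the paper (it defers $J=2$ to a ``separate treatment''), is that your ``not all can vanish simultaneously'' step is vacuous when $J=2$: there is then a single value $1+\dth\lambda_1=1-2\dth/\dxh^2$, which does vanish at CFL equality $\dth=\dxh^2/2$.
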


\begin{proof}
  Thanks to \eqref{eq:lambdaell}, we observe that $\lambda_0=0$
  and for $\ell\in \{1,\cdots,J-1\}$, ${0<\ell\pi/(2J)<\pi/2}$, so that
  $0<\sin^2\left(\ell\pi/(2J)\right)<1$ and hence, using \eqref{eq:CFLheat},
  $-2<-4\dth/(\dxh^2)\times\sin^2\left(\ell\pi/(2J)\right)<0$, and therefore
  \begin{equation*}
    -1<1+\dth \lambda_{\ell}<1.
  \end{equation*}
  This proves \eqref{eq:etapluspetitque1}, since the $J-1$ values
  $(1+\dth\lambda_\ell)_{1\leq \ell \leq J-1}$ are all distinct (hence not all zero) as soon as
  $J\geq 3$ ($J=2$ is easily treated separately).
\end{proof}

Another useful and more precise estimate on the eigenvalues of $I+\dth\Pd$ is provided
in Proposition \ref{prop:encadrevpIplusPdelta}.

% \begin{proposition}
%   \label{prop:estimsommepuissanceeta}
%   Assume $L>0$ is fixed, $J\geq 3$ and $\dth>0$ are given.
%   Let $\eta$ be defined by \eqref{eq:defeta}.
%   Assuming the CFL condition \eqref{eq:CFLheat}, we have
%   \begin{equation*}
%     \forall n\in\N,\qquad
%     \dth \sum_{k=0}^{n-1} \eta^k \leq 2L^2.
%   \end{equation*}
% \end{proposition}

% \begin{proof}
%   Let $L>0$, $J\geq 3$ and $\dth>0$ such that \eqref{eq:CFLheat} holds.
%   Observe that $0<\pi/(2J)$, so that $\sin(\pi/2J)<\pi/(2J)$. This implies that
%   \begin{equation*}
%     1+\dth\lambda_1 = 1-4\frac{\dth}{\dxh^2}\sin^2\left(\frac{\pi}{2J}\right)
%     > 1 - 4 \frac{\dth}{\dxh^2} \frac{\pi^2}{4J^2}
%     \geq  1-\frac12\frac{\pi^2}{J^2}>0.
%   \end{equation*}
%   The monotonicity of $\ell\mapsto \lambda_\ell$ over the set $\{1,\cdots,J-1\}$
%   ensures that
%   \begin{equation*}
%     \eta=\max\left\{\left|1+\dth\lambda_{J-1}\right|, 1+\dth\lambda_1\right\}.
%   \end{equation*}
% \end{proof}

\subsection{Description of the lack of consistency}
\label{subsec:lackconsistency}

\begin{definition}
  \label{def:Pidx}
  We introduce the ``projection'' operator $\Pidx$ acting on continuous functions $w$ over the closed
bounded interval $[0,L]$ by setting
  for all $j\in\{0,\cdots,J-1\}$, $(\Pidx(w))_j=w(x_j)$.
\end{definition}

The consistency of the operator $\Pd$ with respect to $P$ can be measured using the following
operator.

\begin{definition}
  \label{def:Ldelta}
  For all smooth function $w$ over $[0,L]$, we set for all $J\geq 2$,
\begin{equation*}
  \Ld w = \left(\Pidx P -\Pd\Pidx\right)w =
  \begin{pmatrix}
    \partial_x^2 w(x_0) - \frac{w(x_1)-w(x_0)}{\dxh^2}\\
    \partial_x^2 w(x_1) - \frac{w(x_0)-2w(x_1)+w(x_2)}{\dxh^2}\\
    \vdots\\
    \partial_x^2 w(x_{J-2}) - \frac{w(x_{J-3})-2w(x_{J-2})+w(x_{J-1})}{\dxh^2}\\
    \partial_x^2 w(x_{J-1}) - \frac{w(x_{J-2})-w(x_{J-1})}{\dxh^2}
  \end{pmatrix}.
\end{equation*}
\end{definition}

For the analysis to come, we split the consistency defect above into two terms.

\begin{proposition}
  \label{prop:Ldelta}
  For all smooth function $w$ over $[0,L]$ such that $\partial_x w(x_0)=\partial_x w(x_{J-1})=0$,
  we have for all $J\geq 2$,
  \begin{equation}
  \label{eq:decoupageLdelta}
  \Ld w:=\left(\Pidx P -\Pd\Pidx\right)w = \Ld^1 w + \dxh^2 \Ld^2 w,
\end{equation}
with
\begin{equation}
  \label{eq:defL1}
  \Ld^1 w = 
  \begin{pmatrix}
  \displaystyle  \frac12 \partial_x^2 w(x_0) +   \frac{\dxh}{2} \int_0^1(1-\sigma)^2 \partial_x^3 w(\sigma\dxh)\dd\sigma\\
    0\\
    \vdots\\
    0\\
 \displaystyle   \frac12 \partial_x^2 w(x_{J-1}) +   \frac{\dxh}{2} \int_0^1(1-\sigma)^2 \partial_x^3 w(x_{J-2}+\sigma\dxh)\dd\sigma
  \end{pmatrix},
\end{equation}
and
\begin{equation}
  \label{eq:defL2}
  \Ld^2 w = \frac{1}{6}
  \begin{pmatrix}
    0\\
   \displaystyle \int_0^1 (1-\sigma)^3 \partial_x^4 w (x_{1}+\sigma\dxh)\dd\sigma
    +\int_0^1 (\sigma-1)^3 \partial_x^4 w (x_{1}-\sigma\dxh)\dd\sigma\\
    \vdots\\
   \displaystyle  \int_0^1 (1-\sigma)^3  \partial_x^4 w (x_{J-2}+\sigma\dxh)\dd\sigma
    +\int_0^1 (\sigma-1)^3  \partial_x^4 w (x_{J-2}-\sigma\dxh)\dd\sigma\\
    0
  \end{pmatrix}.
\end{equation}
\end{proposition}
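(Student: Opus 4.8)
The two summands on the right-hand side of \eqref{eq:decoupageLdelta} have disjoint supports when viewed as vectors of $\R^J$: $\Ld^1 w$ vanishes on the rows $j\in\{1,\dots,J-2\}$, while $\dxh^2\Ld^2 w$ vanishes on the two rows $j\in\{0,J-1\}$. Hence the plan is to verify the identity componentwise. On each interior row one must show $(\Ld w)_j=\dxh^2(\Ld^2 w)_j$, and on each of the two boundary rows one must show $(\Ld w)_j=(\Ld^1 w)_j$. The only tool needed is Taylor's formula with integral remainder, used at order $3$ on the interior rows and at order $2$ on the boundary rows, combined in the latter case with the Neumann hypotheses; in each case one finishes by the change of variables $t=x_j\pm\sigma\dxh$, which turns the remainder into an integral over $[0,1]$ with a power of $(1-\sigma)$ (or $(\sigma-1)$) as weight.

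For a fixed interior index $j\in\{1,\dots,J-2\}$, I would expand $w$ to order $3$ around $x_j$, so that the remainder features $\partial_x^4 w$, and evaluate the expansion at $x_{j+1}=x_j+\dxh$ and at $x_{j-1}=x_j-\dxh$. Summing the two expansions, the first- and third-order terms cancel since they carry the opposite factors $\pm\dxh$ and $\pm\dxh^3$, whereas the zeroth- and second-order terms produce $2w(x_j)$ and $\dxh^2\partial_x^2 w(x_j)$; only the two fourth-order integral remainders survive, and together they contribute a term proportional to $\dxh^4$. Therefore $w(x_{j-1})-2w(x_j)+w(x_{j+1})-\dxh^2\partial_x^2 w(x_j)$ equals $\dxh^4$ times a sum of two integrals of $\partial_x^4 w$ against a cubic weight on $[0,1]$; dividing by $\dxh^2$ and rearranging yields exactly $(\Ld w)_j=\dxh^2(\Ld^2 w)_j$, i.e. row $j$ of \eqref{eq:defL2} (and $(\Ld^1 w)_j=0$ there, as needed).

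For the row $j=0$ — and, symmetrically, $j=J-1$ — the discrete Laplacian only involves the one-sided quantity $(w(x_1)-w(x_0))/\dxh^2$, so I would expand $w$ to order $2$ around $x_0$ and evaluate at $x_1=x_0+\dxh$. The decisive step here, the one genuinely attached to this inconsistent boundary stencil, is to use the Neumann hypothesis $\partial_x w(x_0)=0$ to remove the first-order term; what remains is $w(x_1)-w(x_0)=\tfrac{\dxh^2}{2}\partial_x^2 w(x_0)$ plus an integral remainder of order $\dxh^3$ involving $\partial_x^3 w$ against a quadratic weight. Dividing by $\dxh^2$ and subtracting the result from $\partial_x^2 w(x_0)$ leaves $\tfrac12\partial_x^2 w(x_0)$ plus a term of order $\dxh$, which is precisely the first component of $\Ld^1 w$ in \eqref{eq:defL1} (recall $x_0=0$); and $(\dxh^2\Ld^2 w)_0=0$. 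The last row is treated in the same way, expanding $w$ around $x_{J-1}$, evaluating at $x_{J-2}=x_{J-1}-\dxh$, and invoking $\partial_x w(x_{J-1})=0$.

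I do not expect any conceptual obstacle: the proof is pure Taylor bookkeeping. The delicate part, and where sign errors are easiest to make, is the backward half of each expansion, where the increment equals $-\dxh$: one must track the signs generated by the odd powers $(-\dxh)^k$ and stay consistent about the orientation of the substitution $t=x_j\pm\sigma\dxh$, so that the two cubic factors in an interior row (appearing as $(1-\sigma)^3$ and $(\sigma-1)^3$) and the global signs in \eqref{eq:defL1}--\eqref{eq:defL2} are matched correctly. Recombining the $\partial_x^3 w$ boundary remainders into the single integral written in \eqref{eq:defL1} is the last small piece, and then the componentwise checks assemble into \eqref{eq:decoupageLdelta}.
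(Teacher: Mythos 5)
Your componentwise Taylor expansion with integral remainder (order three at interior nodes, order two plus the Neumann conditions at the two boundary nodes, followed by the substitution $t=x_j\pm\sigma\dxh$) is exactly the classical computation behind Proposition \ref{prop:Ldelta}, for which the paper supplies no written proof, so your route is correct and essentially the intended one. One caveat on the sign bookkeeping you yourself flag: carried out carefully, the boundary remainder comes out as $-\frac{\dxh}{2}\int_0^1(1-\sigma)^2\,\partial_x^3 w(\sigma\dxh)\,\dd\sigma$ and the two interior remainders add with a common weight $-(1-\sigma)^3/6$, so the entries of \eqref{eq:defL1}--\eqref{eq:defL2} as printed carry harmless sign slips; this affects neither the structure of \eqref{eq:decoupageLdelta} nor the later estimates, which only use the moduli of these entries.
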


An interpretation of the splitting \eqref{eq:decoupageLdelta} of the consistency error
of the operator $\Pd$ with respect to $P$ is the following: The operator $\Pd$
is consistent with the operator $P$ to order $\dxh^2$ at interior points $(x_j)_{1\leq j\leq J-2}$,
yet it is {\it not} consistent at boundary points $x_0$ and $x_{J-1}$.
This is the main difficulty in the error analysis of the scheme \eqref{eq:discrheat}.
We shall explain how to deal with
this lack of consistency in Section \ref{subsec:analysisL}.

\subsection{Main results}

The main result of this paper is the following error estimate, that is valid
{\it uniformly in time} for the approximation of the solutions of the linear heat
equation \eqref{eq:heat} by the linear scheme \eqref{eq:discrheat}.
In particular, this result encapsulates the lack of consistency that appears
through the operator $\mathcal L^1_\dxh+\dxh^2\mathcal L^2_\dxh$ defined in \eqref{eq:decoupageLdelta},
and shows how the error behaves over short and long times.
The proof of the result is carried out in Section \ref{sec:erroranalysis},
and numerical results illustrating and supporting it are provided in Section \ref{sec:num}.

\begin{theorem}
  \label{th:mainresult}
  Assume $L>0$ is fixed. There exists $C>0$ such that for all $u^0\in H^6(0,L)$
  satisfying \eqref{eq:hypotheseu0},
  for all $J\geq 2$, $\dth\in(0,1)$ such that
  the CFL condition \eqref{eq:CFLheat} holds, and all $n\geq 1$,
  \begin{equation}
    \label{eq:estimmainresult}
    \|\Pidx u(n\dth)-\vh^n\|_{\ell^2}  \leq
    \left\|\Pidx u^0-\vh^0\right\|_{\ell^2} + C \left(\dxh \sum_{p=1}^{+\infty}|\alpha_p|p^4
    +\dth \left\|P^2 u^0\right\|_{H^1}\right).
\end{equation}
\end{theorem}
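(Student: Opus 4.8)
The plan is to control the error $e^n:=\Pidx u(n\dth)-\vh^n$ through a Lax-type decomposition into a stable part, a time-consistency part, an interior space-consistency part, and a boundary space-consistency part, the last being the real difficulty because $\Pd$ is inconsistent of order $\ooo(1)$ at $x_0$ and $x_{J-1}$.

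First I would derive the error recursion. Taylor-expanding the exact solution in time to second order and using $\partial_t u=Pu$, $\partial_t^2 u=P^2u$ (Property~\ref{prop:uestreguliereentemps}) together with $\Pidx Pw=\Pd\Pidx w+\Ld w$ (Definition~\ref{def:Ldelta}) gives
\[
e^{n+1}=(\Id+\dth\Pd)\,e^n+\dth\,\Ld u(n\dth)+r^n,\qquad
r^n:=\Pidx\!\!\int_{n\dth}^{(n+1)\dth}\!\!\big((n+1)\dth-s\big)\,P^2u(s)\,\dd s .
\]
Unfolding this and using the $\ell^2$-contractivity $\|(\Id+\dth\Pd)^m\|\le 1$ of Proposition~\ref{prop:stabCFLevident},
\[
\|e^n\|_{\ell^2}\le\|e^0\|_{\ell^2}+\sum_{k=0}^{n-1}\|r^k\|_{\ell^2}
+\Big\|\sum_{k=0}^{n-1}(\Id+\dth\Pd)^{n-1-k}\,\dth\,\Ld u(k\dth)\Big\|_{\ell^2},
\]
so the first term of \eqref{eq:estimmainresult} is already present and the two remaining sums must be bounded uniformly in $n$. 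With a quadrature/Sobolev bound $\|\Pidx v\|_{\ell^2}\lesssim\|v\|_{H^1}$ and Property~\ref{prop:debutalternatif}, one has $\|r^k\|_{\ell^2}\lesssim\dth^2\,{\rm e}^{-\pi^2 k\dth/L^2}\|P^2u^0\|_{H^1}$; since $\sum_k\dth^2{\rm e}^{-ck\dth}\le C\dth$, this yields the $C\dth\|P^2u^0\|_{H^1}$ term. For the interior part I use Proposition~\ref{prop:Ldelta}: the entries of $\dxh^2\Ld^2u(k\dth)$ are $\ooo(\dxh^2\|\partial_x^4u(k\dth)\|_{L^\infty})$, hence $\lesssim\dxh^2\sum_{p\ge1}|\alpha_p|p^4{\rm e}^{-cp^2k\dth}$ by differentiating \eqref{eq:solexheat} termwise; summing $\dth\times$ this over $k$ (Tonelli, $\dth\sum_k{\rm e}^{-cp^2k\dth}\le C$) bounds this contribution by $C\dxh^2\sum_p|\alpha_p|p^4\le C\dxh\sum_p|\alpha_p|p^4$ since $\dxh<1$.

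The heart of the matter is the boundary part $\Ld^1 u(k\dth)$, which up to $\ooo(\dxh)$ remainders is supported on the two endpoints with $\ooo(1)$ entries close to $\tfrac{1}{2}\partial_x^2u(x_0,k\dth)$ and $\tfrac{1}{2}\partial_x^2u(x_{J-1},k\dth)$; a naive termwise bound of $\dth\Ld^1u(k\dth)$ in $\ell^2$ is only $\ooo(\sqrt\dxh)$, and summing it even against the discrete spectral gap of Proposition~\ref{prop:defeta} still leaves $\ooo(\sqrt\dxh)$. Instead I write $\Ld^1u(k\dth)=\mu^k\un+\Pd\gamma^k$ with $\mu^k=\langle\Ld^1u(k\dth)\rangle_\delta$ and $\gamma^k$ the zero-mean solution of $\Pd\gamma^k=\Ld^1u(k\dth)-\mu^k\un$. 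The key ``precise consistency'' estimate, obtained from the discrete coercivity $-\langle\Pd w,w\rangle_\delta=\tfrac{1}{J\dxh^2}\sum_{j=0}^{J-2}(w_{j+1}-w_j)^2$ and a one-dimensional discrete Sobolev inequality (the refined spectral information of Section~\ref{subsec:analysisL}), is
\[
\|\gamma^k\|_{\ell^2}\lesssim\dxh\big(|\partial_x^2u(x_0,k\dth)|+|\partial_x^2u(x_{J-1},k\dth)|\big)+\dxh^2\|\partial_x^3u(k\dth)\|_{L^\infty},\qquad |\mu^k|\lesssim\dxh(\cdots).
\]
Since $\dth\Pd=(\Id+\dth\Pd)-\Id$, an Abel summation turns the offending sum into a telescoping one,
\[
\sum_{k=0}^{n-1}(\Id+\dth\Pd)^{n-1-k}\,\dth\Pd\,\gamma^k=(\Id+\dth\Pd)^n\gamma^0-\gamma^{n-1}+\sum_{j=0}^{n-2}(\Id+\dth\Pd)^{n-1-j}(\gamma^{j+1}-\gamma^j),
\]
whose $\ell^2$-norm is at most $\|\gamma^0\|_{\ell^2}+\|\gamma^{n-1}\|_{\ell^2}+\sum_j\|\gamma^{j+1}-\gamma^j\|_{\ell^2}$ (again using $\|(\Id+\dth\Pd)^m\|\le1$). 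The first two are $\lesssim\dxh\sum_p|\alpha_p|p^2$ by the estimate above and $\|\partial_x^2u(t)\|_{L^\infty}\le\sqrt2\sum_{p\ge1}|\alpha_p|(p\pi/L)^2$; the increments $\gamma^{j+1}-\gamma^j$ are driven by $\partial_x^2u(\cdot,(j+1)\dth)-\partial_x^2u(\cdot,j\dth)$ evaluated at $x_0$ and $x_{J-1}$, of size $\lesssim\dth\sum_p|\alpha_p|p^4{\rm e}^{-cp^2 j\dth}$ by termwise differentiation of \eqref{eq:solexheat} and $|{\rm e}^{-cp^2\dth}-1|\le cp^2\dth$, so that $\sum_{j=0}^{n-2}\|\gamma^{j+1}-\gamma^j\|_{\ell^2}\lesssim\dxh\sum_p|\alpha_p|p^4\,\dth\sum_j{\rm e}^{-cp^2 j\dth}\lesssim\dxh\sum_p|\alpha_p|p^4$; likewise $\big|\sum_k\dth\mu^k\big|\lesssim\dxh\sum_p|\alpha_p|p^2$.

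Collecting the three sums — and absorbing the leftover $\ooo(\dxh^2)$ and $\ooo(\dth)$ remainders, which come from the $\dxh^2\|\partial_x^3(\cdot)\|_{L^\infty}$ pieces of the corrector estimate, into $C(\dxh\sum_p|\alpha_p|p^4+\dth\|P^2u^0\|_{H^1})$ with the help of the CFL relation $\dth\le\dxh^2/2$ and Cauchy--Schwarz on the cosine coefficients — yields \eqref{eq:estimmainresult}. The main obstacle, as flagged above, is precisely the $\ooo(1)$ inconsistency at $x_0$ and $x_{J-1}$: a standard Lax argument (consistency $\times$ time) fails, and the cure is to represent this boundary defect as $\Pd$ applied to an $\ooo(\dxh)$ corrector and exploit $\dth\Pd=(\Id+\dth\Pd)-\Id$ to telescope — equivalently, to run a discrete Gronwall estimate on the corrected error $e^n-\sum_{k<n}(\Id+\dth\Pd)^{n-1-k}\dth\Pd\gamma^k$ — which is where the discrete coercivity/spectral-gap estimates and the continuous decay estimates of Properties~\ref{prop:decrchaleur} and~\ref{prop:debutalternatif} enter jointly.
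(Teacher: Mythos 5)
Your decomposition of the error and your treatment of the $\varepsilon_2$ term and of the interior defect $\dxh^2\Ld^2$ coincide with the paper's (Propositions \ref{prop:estimeps2} and \ref{prop:estimfinaleL2}), but your handling of the crucial boundary defect $\Ld^1$ is a genuinely different route. The paper works mode by mode in $p$: since $\Ld^1 c_p$ is supported at the two endpoints, it expands $e_0$ and $e_{J-1}$ in the eigenbasis of $\Pd$ (formulas \eqref{eq:decompe0}--\eqref{eq:decompeJm1}), exploits the $\mathcal O(1/J)$ size of the overlaps together with the refined bound $|1+\dth\lambda_\ell|\le{\rm e}^{-\frac{\dth}{\dxh^2}\sin^2(\ell\pi/J)}$ of Proposition \ref{prop:encadrevpIplusPdelta}, and then uses the quadrature/convolution Lemmas \ref{lem:teknikexpsin} and \ref{lem:teknikConvolExpmoins} (a discrete parabolic-smoothing gain of order $\dxh/\sqrt{m\dth}$) to reach the per-mode bound $C\dxh p^3$ of Proposition \ref{prop:estimfinaleL1}. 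You instead write $\Ld^1 u(k\dth)=\mu^k\un+\Pd\gamma^k$ with a zero-mean corrector satisfying $\|\gamma^k\|_{\ell^2}\lesssim\dxh$, and telescope via $\dth\Pd=(\Id+\dth\Pd)-\Id$, so that only $\gamma^0$, $\gamma^{n-1}$ and the time increments $\gamma^{k}-\gamma^{k-1}$ need to be summed; the corrector bound is exactly the content of the paper's Proposition \ref{prop:sumsumIplusPdelta} (boundedness of $\sum_\ell\lambda_\ell^{-2}$ after the $1/J^2$ weights), which the paper only deploys for the stationary defect in Theorem \ref{th:steadystateuniform}. In effect you transplant the Section~4 mechanism, augmented by time-differencing, into the proof of Theorem \ref{th:mainresult}. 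What each buys: the paper's spectral route yields the sharper weight $p^3$ and makes the smoothing effect explicit, while yours is more elementary (no Lemma \ref{lem:teknikexpsin}/\ref{lem:teknikConvolExpmoins}, no refined eigenvalue estimate, only stability, the spectral gap and a discrete Poincar\'e/coercivity argument) and still lands within the stated $p^4$ budget.

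One bookkeeping wrinkle you should repair: you feed the whole of $\Ld^1 u(k\dth)$, including its $\mathcal O(\dxh)$ third-derivative remainder (the integral terms in \eqref{eq:defL1}), into the corrector and then difference in time; the increments of that remainder cost an extra factor $p^2\dth$, producing a leftover of the type $\dxh^2\dth\sum_p|\alpha_p|p^5$, which is not obviously absorbed by $C\bigl(\dxh\sum_p|\alpha_p|p^4+\dth\|P^2u^0\|_{H^1}\bigr)$ under the sole assumption $u^0\in H^6$ (the CFL condition bounds $\dth$ from above, so it cannot trade $\dxh$-powers for the missing power of $p$). The fix stays inside your framework: peel off the $\mathcal O(\dxh)$ remainder of $\Ld^1$ first and bound its cumulative contribution directly by stability (it is $\mathcal O(\dxh^{3/2}\sum_p|\alpha_p|p^3)$), and apply the corrector/Abel argument only to the pure endpoint values $\tfrac12\partial_x^2u(x_0,k\dth)$, $\tfrac12\partial_x^2u(x_{J-1},k\dth)$, whose time increments give exactly the $\dxh\sum_p|\alpha_p|p^4$ term. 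With that reorganization your argument closes and gives the claimed uniform-in-time estimate.
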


This theorem implies straightforwardly the following corollary.

\begin{corollary}
  Assume $L>0$ is fixed. There exists $C>0$ such that for all $u^0\in H^6(0,L)$ satisfying
  \eqref{eq:hypotheseu0}, for all $J\geq 2$,
  $\dth\in(0,1)$ such that the CFL condition \eqref{eq:CFLheat} holds, the numerical
  solution $(\vh^n)_{n\geq 0}$ of \eqref{eq:discrheat} starting from $\vh^0=\Pidx u^0$
  satisfies for all $n\geq 1$,
  \begin{equation*}
    \|\Pidx u(n\dth)-\vh^n\|_{\ell^2} \leq  C(\dxh+\dth)\|u^0\|_{H^5}.
  \end{equation*}
\end{corollary}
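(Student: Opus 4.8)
The plan is to apply Theorem~\ref{th:mainresult} with the specific initialization $\vh^0=\Pidx u^0$, which makes the term $\|\Pidx u^0-\vh^0\|_{\ell^2}$ vanish identically, and then to show that each of the two remaining contributions on the right-hand side of \eqref{eq:estimmainresult} is bounded by a constant (depending only on the fixed $L$) times $\|u^0\|_{H^5}$. Since $H^6(0,L)\subset H^5(0,L)$, all the quantities below are finite. Concretely, it suffices to establish
\begin{equation*}
  \sum_{p=1}^{+\infty}|\alpha_p|\,p^4 \leq C_1\|u^0\|_{H^5}
  \qquad\text{and}\qquad
  \|P^2u^0\|_{H^1}\leq C_2\|u^0\|_{H^5},
\end{equation*}
for constants $C_1,C_2>0$ depending only on $L$; substituting these into \eqref{eq:estimmainresult} and collecting the factors $\dxh$ and $\dth$ then yields the claimed bound $C(\dxh+\dth)\|u^0\|_{H^5}$.

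The second inequality is immediate: since $P=\partial_x^2$ we have $P^2u^0=\partial_x^4u^0$, whence $\|P^2u^0\|_{H^1}^2=\|\partial_x^4u^0\|_{L^2}^2+\|\partial_x^5u^0\|_{L^2}^2\leq\|u^0\|_{H^5}^2$, up to the fixed constant relating the normalized scalar product \eqref{eq:prodscalL2} to the ambient $H^5$ norm. The first inequality is the only genuine step, and it rests on Parseval's identity for the basis $(c_p)_{p\geq 0}$ combined with a Cauchy--Schwarz argument that trades one power of $p$ for summability. Differentiating \eqref{eq:baseONcos} term by term, $\partial_x^5 c_p$ equals $(p\pi/L)^5$ times a member of the sine family $\sqrt2\sin(p\pi x/L)$, which is again orthonormal for \eqref{eq:prodscalL2}; hence $\sum_{p\geq 1}|\alpha_p|^2(p\pi/L)^{10}=\|\partial_x^5u^0\|_{L^2}^2\leq\|u^0\|_{H^5}^2$, so that $\sum_{p\geq1}|\alpha_p|^2p^{10}\leq (L/\pi)^{10}\|u^0\|_{H^5}^2$. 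Writing $p^4=p^5\cdot p^{-1}$ and applying Cauchy--Schwarz gives
\begin{equation*}
  \sum_{p=1}^{+\infty}|\alpha_p|\,p^4
  \leq\Bigl(\sum_{p=1}^{+\infty}|\alpha_p|^2p^{10}\Bigr)^{1/2}
       \Bigl(\sum_{p=1}^{+\infty}p^{-2}\Bigr)^{1/2}
  \leq \frac{\pi}{\sqrt 6}\,(L/\pi)^{5}\,\|u^0\|_{H^5},
\end{equation*}
using $\sum_{p\geq1}p^{-2}=\pi^2/6$, which is exactly the first inequality with $C_1=(L/\pi)^5\pi/\sqrt6$.

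I expect the only delicate point to be the bookkeeping of the normalization constants linking the $1/L$-weighted scalar product \eqref{eq:prodscalL2} (for which the $c_p$ are orthonormal and Parseval applies cleanly) to the convention used for the $H^5$ norm in the theorem statement; since $L>0$ is fixed, this affects only the value of the final constant $C$ and not the structure of the estimate. I also note that the half-derivative lost in the Cauchy--Schwarz step is precisely what forces the use of $\|u^0\|_{H^5}$ (rather than a lower-order norm) and is consistent with the regularity $u^0\in H^6(0,L)$ assumed throughout.
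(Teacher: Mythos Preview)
Your proposal is correct and follows essentially the same route as the paper: apply Theorem~\ref{th:mainresult} with $\vh^0=\Pidx u^0$, bound $\|P^2u^0\|_{H^1}$ trivially by $\|u^0\|_{H^5}$, and control $\sum_{p\geq1}|\alpha_p|p^4$ via Cauchy--Schwarz with the splitting $p^4=p^5\cdot p^{-1}$ together with Parseval for $\partial_x^5u^0$ in the sine basis. The one point worth making explicit is that the identity $\sum_{p\geq1}|\alpha_p|^2(p\pi/L)^{10}=\|\partial_x^5u^0\|_{L^2}^2$ relies on the boundary conditions encoded in hypothesis~\eqref{eq:hypotheseu0} (so that repeated integration by parts produces no boundary terms), which the paper also invokes at this step.
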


\begin{proof}
  On the one hand, since $u^0$ satisfies \eqref{eq:hypotheseu0}, it is obvious that
  \begin{equation*}
    \left\|P^2 u^0\right\|_{H^1} \leq \|u^0\|_{H^5}.
  \end{equation*}
  On the other hand, we have
  \begin{equation*}
    \sum_{p=1}^{+\infty}|\alpha_p|p^4
    \leq \left(\sum_{p=1}^{+\infty}|\alpha_p|^2p^{10}\right)^{\frac12}
    \times
    \left(\sum_{p=1}^{+\infty}p^{-2}\right)^{\frac12}
    \leq C \left(\sum_{p=1}^{+\infty}|\alpha_p|^2\left(\frac{p\pi}{L}\right)^{10}\right)^{\frac12}
    \leq C \|\partial_x^5 u^0\|_{L^2} \leq C \|u^0\|_{H^5},
  \end{equation*}
  where we used \eqref{eq:hypotheseu0} for the computation
  of the coefficients of $\partial_x^5 u^0$ in the sine basis of
  $L^2(0,L)$. This concludes the proof of the corollary.
\end{proof}

These two results say, in particular, that the scheme \eqref{eq:discrheat} applied to the
discretized version of \eqref{eq:heat} using the finite-difference matrix $\Pd$,
under the CFL condition \eqref{eq:CFLheat}, has {\it uniform-in-time} order
$\mathcal{O}(\dth^{1/2})$, if we have in mind that, in the specific CFL regime $\dth=\dxh^2/2$,
$\dxh=\mathcal O(\dth^{\frac12})$. 

\section{Error analysis}
\label{sec:erroranalysis}

This section is devoted to the proof of Theorem
\ref{th:mainresult}.
Section \ref{subsec:spectralPdelta} extends the spectral
analysis of the operator $\Pd$.
Sections \ref{subsec:analysisL} and \ref{subsec:analysisepsilon2}
present an analysis of the second and third terms
in the decomposition \eqref{eq:errorheat} of the error below.
Finally, Section \ref{subsec:synthesis} sums up the previous
result, and allows for the proof of Theorem \ref{th:mainresult}.

Using the operator $\Pidx$ defined above (see Definition \ref{def:Pidx}),
we define for all $n\in\N$ the convergence error at time step number $n$ by
\begin{equation*}
  \eh^n = \Pidx u(n\dth)-\vh^n,
\end{equation*}
where $u$ solves \eqref{eq:heat} and $(\vh^n)_{n\geq 0}$ is given by \eqref{eq:discrheat}.
Since $u^0$ satisfies \eqref{eq:hypotheseu0}, Property \ref{prop:uestreguliereentemps} ensures
that $t\mapsto u(t)$ is $\mathcal C^2$ from $[0,+\infty)$ to $L^2(0,L)$.
Hence, using \eqref{eq:heat} in a Taylor expansion in integral remainder form
of $t\mapsto u(t)$ at $t=n\dth$, we obtain
\begin{eqnarray*}
  \eh^{n+1} & = & \Pidx \left(u(n\dth)+\dth Pu(n\dth) + \int_{n\dth}^{(n+1)\dth}((n+1)\dth-s)P^2 u(s)\dd s\right) -
  \left(\Id+\dth\Pd\right)\vh^n\\
          & = & \Pidx u(n\dth) + \dth \Pidx P u(n\dth)-\left( \Id+\dth \Pd\right)\vh^n
                + \Pidx \int_{n\dth}^{(n+1)\dth}((n+1)\dth-s)P^2 u(s)\dd s\\
          & = & \left(\Id+\dth\Pd\right)\eh^n + \underbrace{\dth \left(\Pidx P - \Pd\Pidx\right)u(n\dth)}_{:=\varepsilon^n_1}
                + \underbrace{\int_{n\dth}^{(n+1)\dth}((n+1)\dth-s)\Pidx P^2 u(s)\dd s}_{:=\varepsilon^n_2}.
\end{eqnarray*}
This yields the expression of the error as
\begin{equation}
  \label{eq:errorheat}
  \eh^n = \left(\Id+\dth\Pd\right)^n \eh^0 + \sum_{k=0}^{n-1} \left(\Id+\dth\Pd\right)^{n-1-k}
  \varepsilon^k_1 + \sum_{k=0}^{n-1} \left(\Id+\dth\Pd\right)^{n-1-k}
  \varepsilon^k_2.
\end{equation}

Observe that the terms in $\varepsilon_1$ and $\varepsilon_2$ only depend on the
exact solution $u$. Note also that $\varepsilon_1$ contains a factor $\dth$ and
$\varepsilon_2$ scales as $\dth^2$.

The goal of this section is to establish uniform in time estimates on the three
terms in the right-hand
side of \eqref{eq:errorheat}, in order to prove Theorem \ref{th:mainresult}.
The term with $\varepsilon_1$ is studied in Section \ref{subsec:analysisL}
and that with $\varepsilon_2$ is studied in Section \ref{subsec:analysisepsilon2}.
We start in Section \ref{subsec:spectralPdelta} with some additional spectral properties of the matrix
$\Pd$.

\subsection{Spectral analysis of $\Pd$}
\label{subsec:spectralPdelta}

\subsubsection{Spectral decomposition of $\Pd$}

\begin{lemma}
  \label{lem:specPdelta}
  For all $J\geq 2$, the symmetric matrix $\Pd$ is nonpositive.
  Its eigenvalues are simple and given by \eqref{eq:lambdaell}.
  Moreover, the corresponding eigenvectors $\Wh_0=\un$ and, for $\ell\in\{1,\cdot,J-1\}$,
  \begin{equation*}
    (\Wh_\ell)_j= \sqrt{2}\cos(\ell(j+1/2)\pi/J), \qquad j\in\{0,\hdots,J-1\},
  \end{equation*}
  form an orthonormal basis of $\R^{J}$ for the inner product defined in \eqref{eq:prodscalRJ}.
\end{lemma}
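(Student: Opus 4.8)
The plan is to prove every assertion by explicit computation, exploiting that $\Pd$ is tridiagonal with only its first and last rows altered. Symmetry of $\Pd$ is immediate from the displayed matrix, so the core of the proof is to check that the proposed vectors are eigenvectors, that they are orthonormal for $\langle\cdot,\cdot\rangle_\delta$, and then to read off simplicity and nonpositivity. First I would verify that $\Pd\Wh_\ell=\lambda_\ell\Wh_\ell$ for $\ell\in\{1,\dots,J-1\}$, with $\lambda_\ell$ as in \eqref{eq:lambdaell}. For an interior index $j\in\{1,\dots,J-2\}$, the identity $\cos(a-b)+\cos(a+b)=2\cos a\cos b$ with $a=\ell(j+\tfrac12)\pi/J$ and $b=\ell\pi/J$ gives $(\Wh_\ell)_{j-1}-2(\Wh_\ell)_j+(\Wh_\ell)_{j+1}=2(\cos(\ell\pi/J)-1)(\Wh_\ell)_j=-4\sin^2(\ell\pi/(2J))(\Wh_\ell)_j=\dxh^2\lambda_\ell(\Wh_\ell)_j$, so the interior rows hold. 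For the two boundary rows, the key observation is that the half-integer shift $j+\tfrac12$ is tuned to the Neumann discretization: extending the defining formula to the ghost indices $j=-1$ and $j=J$ and using $\cos(\ell\pi\pm x)=(-1)^\ell\cos x$ shows $(\Wh_\ell)_{-1}=(\Wh_\ell)_0$ and $(\Wh_\ell)_J=(\Wh_\ell)_{J-1}$. Hence the first entry of $\Pd\Wh_\ell$, namely $\tfrac1{\dxh^2}((\Wh_\ell)_1-(\Wh_\ell)_0)$, equals $\tfrac1{\dxh^2}((\Wh_\ell)_{-1}-2(\Wh_\ell)_0+(\Wh_\ell)_1)=\lambda_\ell(\Wh_\ell)_0$ by the interior computation, and symmetrically for the last entry; the case $\ell=0$ is trivial since $\Pd\un=0$.

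Next I would establish orthonormality. Expanding products of cosines into sums reduces everything to evaluating $S_k:=\sum_{j=0}^{J-1}\cos(k(j+\tfrac12)\pi/J)$ for integers $k$ with $|k|\le 2J-2$. Reindexing $j\mapsto J-1-j$ gives $S_k=(-1)^k S_k$, hence $S_k=0$ for odd $k$; for even $k$ with $0<|k|<2J$, summing the geometric series $\sum_j e^{\mathrm i k(j+1/2)\pi/J}$ and using $e^{\mathrm i k\pi}=1$ gives $S_k=0$; and $S_0=J$. Taking $k=\ell\pm m$ (which stays strictly inside $(-2J,2J)$ and vanishes only when $\ell=m$) yields $\langle\Wh_\ell,\Wh_m\rangle_\delta=\delta_{\ell m}$ for $\ell,m\ge1$; taking $k=\ell$ gives $\langle\Wh_0,\Wh_\ell\rangle_\delta=0$; and $\langle\Wh_0,\Wh_0\rangle_\delta=1$ is clear. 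This produces $J$ orthonormal vectors in $\R^J$, hence an orthonormal basis.

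Finally, because $\ell\mapsto\sin^2(\ell\pi/(2J))$ is strictly increasing on $\{0,\dots,J-1\}$ (the arguments lie in $[0,\pi/2)$), the eigenvalues $\lambda_\ell$ are pairwise distinct, and since a $J\times J$ matrix has at most $J$ eigenvalues counted with multiplicity, each $\lambda_\ell$ is simple. Nonpositivity of $\Pd$ then follows from $\lambda_\ell\le 0$ and the diagonalization just obtained, or directly from the discrete integration by parts $\langle\Pd\vh,\vh\rangle_\delta=-\tfrac1{J\dxh^2}\sum_{j=0}^{J-2}(\vh_{j+1}-\vh_j)^2\le 0$. I expect the only real bookkeeping to be at the two boundary rows in the eigenvector check (the ghost-index reflection) and in the even/odd case split — together with the excluded value $k=0$ — for the orthogonality sums; the interior part is the textbook computation for the one-dimensional discrete Laplacian.
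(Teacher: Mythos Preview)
Your proposal is correct: the ghost-index reflection argument at the two boundary rows, the evaluation of $S_k$ via the parity/geometric-series split, and the monotonicity argument for simplicity all check out. The paper itself does not give a proof of this lemma---it states that ``the proof, which is very classical, is not included in this paper''---so there is nothing to compare against; your sketch is a standard and complete justification of the result.
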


The proof, which is very classical, is not included in this paper.
%postponed to Appendix \ref{sec:prooflemspecPdelta}.

\subsubsection{Decomposition of the boundary terms using the spectral decomposition of $\Pd$}

Let us denote by $(e_j)_{0\leq j\leq J-1}$ the canonical basis of $\R^J$.
Using the spectral decomposition of $\Pd$ given by Lemma \ref{lem:specPdelta}, we have
\begin{equation}
  \label{eq:decompe0}
  e_0 = \sum_{\ell=0}^{J-1} \langle e_0,\Wh_\ell \rangle_\delta \Wh_\ell = \frac{\sqrt{2}}{J} \sum_{\ell=1}^{J-1}
  \cos\left(\frac{\ell\pi}{2J}\right)\Wh_\ell + \frac{1}{J}  \Wh_0,
\end{equation}
and
\begin{equation}
  \label{eq:decompeJm1}
  e_{J-1} = \sum_{\ell=0}^{J-1} \langle e_{J-1},\Wh_\ell\rangle_\delta \Wh_\ell = \frac{\sqrt{2}}{J} \sum_{\ell=1}^{J-1} (-1)^\ell\cos\left(\frac{\ell\pi}{2J}\right)\Wh_\ell + \frac{1}{J}  \Wh_0.
\end{equation}

\subsubsection{Estimates of the powers of $(\Id+\dth\Pd)$}
 
\begin{proposition}
  \label{prop:encadrevpIplusPdelta}
  For all $L>0$, $\dth>0$ and $J\geq 2$ such that \eqref{eq:CFLheat} holds, one has
  for all ${\ell\in\{0,\cdots,J-1\}}$,
  \begin{equation}
  \label{eq:encadrevpIplusPdelta}
  \left|1+\dth\lambda_\ell\right| \leq {\rm e}^{-\frac{\dth}{\dxh^2}\sin\left(\frac{\ell\pi}{J}\right)^2}.
\end{equation}
\end{proposition}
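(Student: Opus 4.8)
The plan is to prove the pointwise inequality \eqref{eq:encadrevpIplusPdelta} directly from the closed form \eqref{eq:lambdaell} of the eigenvalues, by a one-variable elementary estimate. First I would substitute \eqref{eq:lambdaell} to rewrite, for a fixed $\ell\in\{0,\dots,J-1\}$,
\begin{equation*}
  1+\dth\lambda_\ell = 1 - \frac{4\dth}{\dxh^2}\sin^2\!\left(\frac{\ell\pi}{2J}\right).
\end{equation*}
Setting $s=\sin^2(\ell\pi/(2J))\in[0,1)$ and $r=\dth/\dxh^2\in(0,\tfrac12]$ (the CFL condition \eqref{eq:CFLheat}), the quantity $1+\dth\lambda_\ell = 1-4rs$ lies in $(-1,1]$, so $|1+\dth\lambda_\ell|=|1-4rs|\le 1-4rs$ is false in general; instead I would use $|1-4rs|\le e^{-4rs}$ when $1-4rs\ge 0$ and bound $|1-4rs|\le 1\le e^{-4rs(1-s)}$-type arguments when it is negative. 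The cleaner route: note that $\sin(\ell\pi/J)^2 = 4\sin^2(\ell\pi/(2J))\cos^2(\ell\pi/(2J)) = 4s(1-s)$, so the target reads $|1-4rs|\le e^{-4rs(1-s)}$.

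Next I would split into two cases according to the sign of $1-4rs$. If $1-4rs\ge 0$, then since $r\le\tfrac12$ we have $4rs(1-s)\le 4rs\le 1-4rs+4rs = $ wait—more directly, $1-4rs\le e^{-4rs}\le e^{-4rs(1-s)}$, using $1-x\le e^{-x}$ for $x=4rs\ge 0$ and $1-s\le 1$. If $1-4rs<0$, then $|1-4rs| = 4rs-1$; here I would use that $4rs\le 2s\le 2$, so $4rs-1\le 1$, and it suffices to show $4rs-1\le e^{-4rs(1-s)}$. For this I would check the worst case $r=\tfrac12$ (the left side is increasing in $r$ on the relevant range and the right side decreasing), reducing to showing $2s-1\le e^{-2s(1-s)}$ for $s\in(\tfrac12,1)$. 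Writing $t=2s-1\in(0,1)$, one has $2s(1-s) = \tfrac12(1-t^2)\ge \tfrac12(1-t)$, hmm that goes the wrong way; better: $2s(1-s)=\tfrac12(1-t^2)$ and one must show $t\le e^{-(1-t^2)/2}$, i.e. $\ln t \le -(1-t^2)/2$, i.e. $2\ln t + 1 - t^2\le 0$ on $(0,1)$, which follows since the function $g(t)=2\ln t+1-t^2$ satisfies $g(1)=0$ and $g'(t)=2/t-2t=2(1-t^2)/t>0$ on $(0,1)$, so $g<0$ there. This handles the case $r=\tfrac12$; for $r<\tfrac12$ with $1-4rs<0$ one still has $4rs-1\le 2s-1$ and $e^{-4rs(1-s)}\ge e^{-2s(1-s)}$, so the same inequality suffices.

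The main obstacle — such as it is — is organizing the case analysis so that the single elementary inequality $2\ln t+1-t^2\le 0$ covers the "bad" regime where $1+\dth\lambda_\ell$ is negative; the positive regime is immediate from $1-x\le e^{-x}$. Once those two scalar inequalities are in hand, the proposition follows by applying them with $x=4rs$ and $t=|1+\dth\lambda_\ell|$ for each $\ell$, using $\sin(\ell\pi/J)^2=4s(1-s)$ and $\dth/\dxh^2=r$. I would close by remarking that equality-type sharpness is not needed: the estimate is only used to bound $\eta$ and the powers $(\Id+\dth\Pd)^n$ in the error analysis.
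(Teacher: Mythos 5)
Your argument is correct, but it takes a genuinely different route from the paper. You reduce \eqref{eq:encadrevpIplusPdelta} to the scalar inequality $|1-4rs|\le e^{-4rs(1-s)}$ with $r=\dth/\dxh^2\in(0,\tfrac12]$, $s=\sin^2(\ell\pi/(2J))$, and then split on the sign of $1+\dth\lambda_\ell$: the nonnegative case is immediate from $1-x\le e^{-x}$ together with $1-s\le 1$, and in the negative case you use the CFL condition to pass to the worst case $r=\tfrac12$ (since $4rs-1\le 2s-1$ and $e^{-4rs(1-s)}\ge e^{-2s(1-s)}$) and conclude from the one-variable inequality $2\ln t+1-t^2\le 0$ on $(0,1)$, proved by monotonicity of $g(t)=2\ln t+1-t^2$. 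The paper instead avoids any case analysis: it squares, writes $(1+\dth\lambda_\ell)^2\le \exp\bigl(-8rs(1-2rs)\bigr)$ via $1-u\le e^{-u}$, expresses $1-2rs$ as the convex combination $(1-2r)\cdot 1+2r(1-s)$ (this is where \eqref{eq:CFLheat} enters, exactly as your $r\le\tfrac12$ does), and uses convexity of the exponential to bound everything by $\exp\bigl(-4rs\cos^2(\ell\pi/(2J))\bigr)$, which equals the right-hand side since $\sin^2(\ell\pi/J)=4s(1-s)$. Your version is more elementary and transparent in the delicate regime where $1+\dth\lambda_\ell<0$, at the cost of a case split and a small calculus lemma; the paper's version is case-free and slightly slicker, though the convexity step is less obviously motivated. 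One presentational caveat: your text still contains exploratory false starts (``wait'', ``hmm that goes the wrong way''); these should be excised so that only the final, correct chain of inequalities remains, including the observation that in the negative case $4rs>1$ and $r\le\tfrac12$ force $s>\tfrac12$, so that $t=2s-1\in(0,1)$ is well defined.
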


\begin{proof}
Observe that, for $\ell\in\{0,\cdots,J-1\}$, we have
\begin{equation*}
  1+\dth\lambda_\ell = 1-4\frac{\dth}{\dxh^2}\sin\left(\frac{\ell\pi}{2J}\right)^2.
\end{equation*}
% {\color{blue} 11/04/22 : pas besoin de CFL renforc\'ee [gr\^ace \`a Pauline] : on peut utiliser
%   \begin{equation*}
%     1+\dth\lambda \ell = 1-4\frac{\dth}{\dxh^2}\sin\left(\frac{\ell\pi}{2J}\right)^2
%     \leq {\rm e}^{-4\frac{\dth}{\dxh^2}\sin^2\left(\frac{\ell\pi}{2J}\right)\left(1-2\frac{\dth}{\dxh^2}\sin^2\left(\frac{\ell\pi}{2J}\right)\right)}.
%   \end{equation*}
% }
In particular,
\begin{equation*}
  (1+\dth\lambda_\ell)^2 = 1-8\frac{\dth}{\dxh^2}\sin\left(\frac{\ell\pi}{2J}\right)^2
  +16 \frac{\dth^2}{\dxh^4}\sin\left(\frac{\ell\pi}{2J}\right)^4.
\end{equation*}
Using that for all $u\in\R$, $1-u\leq {\rm e}^{-u}$, we obtain
\begin{eqnarray*}
  (1+\dth\lambda_\ell)^2 & \leq & {\rm exp} \left({-8\frac{\dth}{\dxh^2}\sin\left(\frac{\ell\pi}{2J}\right)^2
                               +16 \frac{\dth^2}{\dxh^4}\sin\left(\frac{\ell\pi}{2J}\right)^4}\right)\\
                         & \leq & {\rm exp} \left({-8\frac{\dth}{\dxh^2}\sin\left(\frac{\ell\pi}{2J}\right)^2
                               \left(1-2\frac{\dth}{\dxh^2}\sin\left(\frac{\ell\pi}{2J}\right)^2\right)}\right)\\
  & \leq & {\rm exp} \left({-8\frac{\dth}{\dxh^2}\sin\left(\frac{\ell\pi}{2J}\right)^2
                               \left[\left(1-2\frac{\dth}{\dxh^2}\right)\times 1 + 2\frac{\dth}{\dxh^2}\left(1-\sin\left(\frac{\ell\pi}{2J}\right)^2\right)\right]}\right).
\end{eqnarray*}
In particular, taking square roots,
\begin{equation*}
  \left|1+\dth\lambda_\ell\right| \leq {\rm exp} \left({-4\frac{\dth}{\dxh^2}\sin\left(\frac{\ell\pi}{2J}\right)^2
                               \left[\left(1-2\frac{\dth}{\dxh^2}\right)\times 1 + 2\frac{\dth}{\dxh^2}\left(1-\sin\left(\frac{\ell\pi}{2J}\right)^2\right)\right]}\right).
\end{equation*}
Using \eqref{eq:CFLheat}, we have
\begin{equation*}
  2\frac{\dth}{\dxh^2}\geq 0,\qquad  1-2\frac{\dth}{\dxh^2}\geq 0,  
\end{equation*}
and the sum of these two real numbers is $1$. By convexity over $\R$ of the exponential
function, we infer
\begin{eqnarray*}
  \lefteqn{\left|1+\dth\lambda_\ell\right|}\\
  & \leq & 
  \left(1-2\frac{\dth}{\dxh^2}\right)
                                           {\rm exp} \left({-4\frac{\dth}{\dxh^2}\sin\left(\frac{\ell\pi}{2J}\right)^2 \times 1}\right)
    +2\frac{\dth}{\dxh^2}
                                           {\rm exp} \left({-4\frac{\dth}{\dxh^2}\sin\left(\frac{\ell\pi}{2J}\right)^2 \times \left(1-\sin\left(\frac{\ell\pi}{2J}\right)^2\right)}\right)\\
  & \leq & \left(1-2\frac{\dth}{\dxh^2}\right)
  {\rm exp} \left({-4\frac{\dth}{\dxh^2}\sin\left(\frac{\ell\pi}{2J}\right)^2 \times 1}\right)
    +2\frac{\dth}{\dxh^2}
           {\rm exp} \left({-4\frac{\dth}{\dxh^2}\sin\left(\frac{\ell\pi}{2J}\right)^2 \times \cos\left(\frac{\ell\pi}{2J}\right)^2}\right)\\
  & \leq & \left(1-2\frac{\dth}{\dxh^2}\right)
  {\rm exp} \left({-4\frac{\dth}{\dxh^2}\sin\left(\frac{\ell\pi}{2J}\right)^2 \times \cos\left(\frac{\ell\pi}{2J}\right)^2}\right)
    +2\frac{\dth}{\dxh^2}
           {\rm exp} \left({-4\frac{\dth}{\dxh^2}\sin\left(\frac{\ell\pi}{2J}\right)^2 \times \cos\left(\frac{\ell\pi}{2J}\right)^2}\right)\\
                                  & \leq & {\rm exp} \left({-4\frac{\dth}{\dxh^2}\sin\left(\frac{\ell\pi}{2J}\right)^2 \times \cos\left(\frac{\ell\pi}{2J}\right)^2}\right).
\end{eqnarray*}
As a conclusion, this implies \eqref{eq:encadrevpIplusPdelta}.
\end{proof}

We provide the following estimate of the sum of the powers of $(\Id+\dth\Pd)$.

\begin{proposition}
  \label{prop:estimsommepuissanceeta}
  Assume $L>0$ is fixed, $J\geq 2$ and $\dth>0$ are given.
  Let $\eta$ be defined by \eqref{eq:defeta}.
  Assuming the CFL condition \eqref{eq:CFLheat}, we have
  \begin{equation*}
    \forall n\geq 1,\qquad
    \dth \sum_{k=0}^{n-1} \eta^k \leq 2L^2.
  \end{equation*}
\end{proposition}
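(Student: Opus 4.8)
The plan is to bound the geometric sum $\sum_{k=0}^{n-1}\eta^k$ by $1/(1-\eta)$ and then show $\dth/(1-\eta)\le 2L^2$. Since \eqref{eq:etapluspetitque1} gives $0<\eta<1$, we have for all $n\ge 1$
\begin{equation*}
  \dth\sum_{k=0}^{n-1}\eta^k \le \dth\sum_{k=0}^{+\infty}\eta^k = \frac{\dth}{1-\eta},
\end{equation*}
so it suffices to establish the $n$-independent bound $\dth/(1-\eta)\le 2L^2$, equivalently $1-\eta\ge \dth/(2L^2)$.

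To get a lower bound on $1-\eta$, recall from \eqref{eq:defeta} that $\eta=\max_{1\le\ell\le J-1}|1+\dth\lambda_\ell|$, and that under the CFL condition \eqref{eq:CFLheat} each $|1+\dth\lambda_\ell|<1$, so the maximum is attained and $1-\eta=1-|1+\dth\lambda_{\ell^\star}|$ for some $\ell^\star\in\{1,\dots,J-1\}$. The extreme eigenvalue closest to $1$ is $\lambda_1=-\frac{4}{\dxh^2}\sin^2(\pi/(2J))$, and since $x\mapsto x$ is the value we want smallest in absolute value of the deviation, I would argue that $\eta=1+\dth\lambda_1$ (the eigenvalue with the smallest absolute value among the nonzero ones), so that
\begin{equation*}
  1-\eta = -\dth\lambda_1 = 4\frac{\dth}{\dxh^2}\sin^2\!\left(\frac{\pi}{2J}\right).
\end{equation*}
Here one must check that $1+\dth\lambda_1\ge 0$ (so that the absolute value can be dropped) and that $1+\dth\lambda_1$ really is the largest of the $|1+\dth\lambda_\ell|$; the latter requires comparing with $|1+\dth\lambda_{J-1}|$, which could be negative under the CFL condition, but its absolute value is still bounded by $1+\dth\lambda_1$ because Proposition \ref{prop:encadrevpIplusPdelta} (or a direct computation) controls it. A clean alternative avoiding the case analysis: use the concavity/comparison $\sin(\theta)\ge \frac{2}{\pi}\theta$ on $[0,\pi/2]$ with $\theta=\pi/(2J)$ to get $\sin^2(\pi/(2J))\ge 1/J^2$, hence $1-\eta\ge 4\dth/(\dxh^2 J^2)$.

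Finally I substitute $\dxh = L/(J-1)$, so $\dxh^2 J^2 = L^2 J^2/(J-1)^2 \le 4L^2$ (since $J\ge 2$ gives $J\le 2(J-1)$, i.e. $J/(J-1)\le 2$). Therefore
\begin{equation*}
  1-\eta \ge \frac{4\dth}{\dxh^2 J^2} \ge \frac{4\dth}{4L^2} = \frac{\dth}{L^2} \ge \frac{\dth}{2L^2},
\end{equation*}
which gives $\dth/(1-\eta)\le L^2\le 2L^2$ and completes the proof. The only delicate point is correctly identifying (or bounding) which eigenvalue realizes the maximum $\eta$ and justifying that the relevant $1+\dth\lambda_\ell$ is nonnegative; the estimate $\sin(\pi/(2J))\ge 1/J$ and the elementary inequality $J\le 2(J-1)$ are routine. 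I expect the main obstacle to be purely bookkeeping: making sure the chain of inequalities is tight enough that the constant $2L^2$ (rather than some larger multiple) comes out, which the factor-of-$4$ slack in $\dxh^2 J^2\le 4L^2$ comfortably provides.
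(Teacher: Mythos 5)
Your argument is correct, and it reaches the stated bound (in fact the sharper bound $L^2$) by a somewhat different route than the paper. Both proofs start from the same skeleton, namely $\dth\sum_{k=0}^{n-1}\eta^k\le \dth/(1-\rho)$ for a suitable upper bound $\rho$ on $\eta$, followed by a lower bound on $1-\rho$ of order $\frac{\dth}{\dxh^2 J^2}$ and the observation $J\dxh\le 2L$. The difference is in how that lower bound is obtained: the paper does not identify the maximizer in \eqref{eq:defeta} at all, but instead reuses Proposition \ref{prop:encadrevpIplusPdelta} to get $\eta\le {\rm e}^{-\frac{\dth}{\dxh^2}\sin^2(\pi/J)}$ and then applies the elementary inequality $1/(1-{\rm e}^{-z})\le 2/z$ (valid here since the CFL condition gives $z\le 1/2$), whereas you work with $\eta$ directly and claim $\eta=1+\dth\lambda_1=1-4\frac{\dth}{\dxh^2}\sin^2\bigl(\frac{\pi}{2J}\bigr)$, then conclude via $\sin(\pi/(2J))\ge 1/J$. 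Your identification of the maximizer is the one point requiring justification, and it does hold under \eqref{eq:CFLheat}: writing $c=\dth/\dxh^2\le 1/2$, for any $\ell$ either $1-4c\sin^2(\ell\pi/(2J))\ge 0$, in which case it is at most $1-4c\sin^2(\pi/(2J))$ by monotonicity, or it is negative, in which case $\bigl|1-4c\sin^2(\ell\pi/(2J))\bigr|=4c\sin^2(\ell\pi/(2J))-1\le 1-4c\sin^2(\pi/(2J))$ because $\sin^2(\ell\pi/(2J))\le\cos^2(\pi/(2J))$ and $4c\le 2$. Note that your ``clean alternative'' does not actually avoid this case analysis — it only replaces the exact evaluation of the sine by the concavity bound — so the short verification above (or an appeal to Proposition \ref{prop:encadrevpIplusPdelta} combined with ${\rm e}^{-z}\le 1-z/2$ for $z\le 1/2$, which is essentially the paper's route) should be included. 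What your approach buys is a slightly more elementary, self-contained argument with a better constant; what the paper's approach buys is that it avoids discussing which eigenvalue attains the maximum and leans on an estimate (Proposition \ref{prop:encadrevpIplusPdelta}) that is needed elsewhere in the error analysis anyway.
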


\begin{proof}
On the one hand, using the CFL condition \eqref{eq:CFLheat},
Proposition \ref{prop:encadrevpIplusPdelta} and the definition \eqref{eq:defeta}
of $\eta$ ensure that ${\eta\leq {\rm e}^{-\frac{\dth}{\dxh^2} \sin\left(\frac{\pi}{J}\right)^2}<1}$.
This ensures that, for all $n\geq 1$,
\begin{equation*}
  \dth \sum_{k=0}^{n-1} \eta^{k} \leq \dth \sum_{k=0}^{n-1} {\rm e}^{-k\frac{\dth}{\dxh}\sin\left(\frac{\pi}{J}\right)^2}
                                     \leq  \dth \frac{1-{\rm e}^{-n\frac{\dth}{\dxh^2}\sin\left(\frac{\pi}{J}\right)^2}}{1-{\rm e}^{-\frac{\dth}{\dxh^2}\sin\left(\frac{\pi}{J}\right)^2}}\\
                                     \leq \dth \frac{1}{1-{\rm e}^{-\frac{\dth}{\dxh^2}\sin\left(\frac{\pi}{J}\right)^2}}.
\end{equation*}
Moreover, using the CFL condition \eqref{eq:CFLheat}, we have
${\dth \sin(\pi/J)^2 / \dxh^2\leq 1/2}$,
and hence this number $z>0$ satisfies $1/(1-{\rm e}^{-z})\leq 2/z$.                                   
This implies that, for all $n\geq 1$,
\begin{equation*}                                     
  \dth \sum_{k=0}^{n-1} \eta^{k} \leq \dth \frac{2}{\frac{\dth}{\dxh^2}\sin\left(\frac{\pi}{J}\right)^2}
  \leq 2 \frac{\dxh^2}{\sin\left(\frac{\pi}{J}\right)^2}
  \leq 2 \frac{\pi^2}{4} \frac{\dxh^2}{\left(\frac{\pi}{J}\right)^2}.
\end{equation*}
Since $\dxh=L/(J-1)$, we have
\begin{equation*}
  \dth \sum_{k=0}^{n-1} \eta^{k} \leq \frac{1}{2} (J\dxh)^2\leq \frac12 L^2 \left(\frac{J}{J-1}\right)^2\leq 2 L^2,
\end{equation*}
since $J\geq 2$.
  
\end{proof}

Another useful estimate of the powers of $(\Id+\dth \Pd)$ is the following, the proof
of which can be found in Section \ref{subsec:proofsumsumIplusPdelta}.

\begin{proposition}
  \label{prop:sumsumIplusPdelta}
  For all $L>0$, there exists $C>0$ such that for all $J\geq 2$, for all $\dth>0$ such that
  \eqref{eq:CFLheat} holds, we have
  \begin{equation*}
    \sum_{\ell=1}^{J-1} \left| \dth \sum_{k=0}^{n-1} (1+\dth \lambda_{\ell})^k\right|^2 \leq C,
  \end{equation*}
  where $(\lambda_\ell)_{0\leq \ell\leq J-1}$ are the eigenvalues of
  $\Pd$ (see \eqref{eq:lambdaell}).
\end{proposition}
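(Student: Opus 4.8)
The plan is to bound each term in the sum explicitly using the closed form of the geometric sum, and then to recognize the resulting sum over $\ell$ as comparable to a convergent series. Fix $\ell\in\{1,\dots,J-1\}$ and write $\mu_\ell = 1+\dth\lambda_\ell$, so that by Proposition \ref{prop:defeta} one has $|\mu_\ell|<1$, hence $\dth\sum_{k=0}^{n-1}\mu_\ell^k = \dth\,\frac{1-\mu_\ell^n}{1-\mu_\ell}$, and therefore
\begin{equation*}
  \left|\dth\sum_{k=0}^{n-1}\mu_\ell^k\right| \leq \frac{2\dth}{1-\mu_\ell}
  = \frac{2\dth}{-\dth\lambda_\ell} = \frac{2}{-\lambda_\ell}
  = \frac{\dxh^2}{2\sin^2\!\left(\frac{\ell\pi}{2J}\right)},
\end{equation*}
using $1-\mu_\ell = -\dth\lambda_\ell > 0$ and the formula \eqref{eq:lambdaell} for $\lambda_\ell$. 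Note this bound is already independent of $n$, which is the crucial point; the CFL condition is not even needed for this first step, only $\mu_\ell\neq 1$.

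Next I would use concavity of $\sin$ on $[0,\pi/2]$, which gives $\sin(\frac{\ell\pi}{2J})\geq \frac{\ell}{J}$ for $\ell\in\{1,\dots,J-1\}$ (since $\frac{\ell\pi}{2J}\in(0,\pi/2)$, and $\sin t\geq \frac{2t}{\pi}$ there). Hence
\begin{equation*}
  \left|\dth\sum_{k=0}^{n-1}\mu_\ell^k\right|^2 \leq \frac{\dxh^4}{4\sin^4\!\left(\frac{\ell\pi}{2J}\right)}
  \leq \frac{\dxh^4}{4}\cdot\frac{J^4}{\ell^4}.
\end{equation*}
Summing over $\ell$ and using $\dxh = L/(J-1)$, so that $\dxh^4 J^4 = L^4 (J/(J-1))^4 \leq 16 L^4$ for $J\geq 2$, we get
\begin{equation*}
  \sum_{\ell=1}^{J-1}\left|\dth\sum_{k=0}^{n-1}\mu_\ell^k\right|^2
  \leq \frac{\dxh^4 J^4}{4}\sum_{\ell=1}^{J-1}\frac{1}{\ell^4}
  \leq 4 L^4 \sum_{\ell=1}^{+\infty}\frac{1}{\ell^4} = \frac{4\pi^4 L^4}{90},
\end{equation*}
which gives the claim with $C = C(L) = 4\pi^4 L^4/90$, a constant depending only on $L$ as required.

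There is essentially no hard part here: the only thing to be careful about is that the bound $\frac{2}{1-\mu_\ell}$ must be controlled uniformly in $n$, which it is because $|\mu_\ell|<1$ forces $|1-\mu_\ell^n|\leq 2$. The role of the CFL condition \eqref{eq:CFLheat} in the statement is only to guarantee, via Proposition \ref{prop:defeta}, that $\mu_\ell \ne 1$ (indeed $|\mu_\ell|<1$) for all $\ell\geq 1$, so that the geometric sum has the stated closed form; it does not otherwise enter the estimate. An alternative, essentially equivalent route would be to bound $\dth\sum_{k=0}^{n-1}\mu_\ell^k$ by $\dth\sum_{k=0}^{n-1}\eta_\ell^k$ with $\eta_\ell = |\mu_\ell|$ and invoke the exponential estimate of Proposition \ref{prop:encadrevpIplusPdelta} term by term, but the direct geometric-sum computation above is shorter and cleaner.
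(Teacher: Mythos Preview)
Your proof is correct and follows essentially the same approach as the paper: sum the geometric series, bound $|1-\mu_\ell^n|\leq 2$ using $|\mu_\ell|\leq 1$, rewrite in terms of $1/\lambda_\ell^2$, apply the concavity bound $\sin t\geq 2t/\pi$, and conclude via $\zeta(4)$. One small remark on your commentary: the CFL condition is in fact needed in the first step to ensure $|\mu_\ell|\leq 1$ (so that $|1-\mu_\ell^n|\leq 2$ uniformly in $n$), not merely $\mu_\ell\neq 1$; but you do invoke this through Proposition~\ref{prop:defeta}, so the proof itself is fine.
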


\subsection{Analysis of the term in $\varepsilon_1$}
\label{subsec:analysisL}

We insert the splitting \eqref{eq:decoupageLdelta}
into the error term with $\varepsilon_1$ in \eqref{eq:errorheat}.
We obtain
\begin{align*} 
 \sum_{k=0}^{n-1} (\Id+\dth \Pd)^{n-1-k}\varepsilon_1^n &= \dth \sum_{k=0}^{n-1} (\Id+\dth \Pd)^{n-1-k} \Ld u (k\dth,\cdot)\\
  &=
  \dth \sum_{k=0}^{n-1} (\Id+\dth \Pd)^{n-1-k} \Ld^1 u (k\dth,\cdot)
  +\dth \dxh^2\sum_{k=0}^{n-1} (\Id+\dth \Pd)^{n-1-k} \Ld^2 u (k\dth,\cdot).
\end{align*}
In this section, we deal with the first and second term
in this decomposition, and conclude.

\subsubsection{Analysis of the term with $\Ld^1$}

\begin{proposition}
  \label{prop:estimfinaleL1}
  Assume $L>0$ is fixed. There exists $C>0$ such that for all $u^0\in H^6(0,L)$
  satisfying \eqref{eq:hypotheseu0}, for all $\dth\in(0,1)$ and $J\geq 2$
  such that the CFL condition \eqref{eq:CFLheat} holds, and all $n\geq 1$,
  \begin{equation}
  \label{eq:estimfinaleL1}
  \left\| \dth \sum_{k=0}^{n-1} (\Id+\dth \Pd)^{n-1-k} \Ld^1 u (k\dth,\cdot)\right\|_{\ell^2}
  \leq C \dxh \sum_{p=1}^{+\infty} |\alpha_p| p^3.
  \end{equation}
\end{proposition}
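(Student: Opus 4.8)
The plan is to exploit the very special structure of $\Ld^1 w$: by \eqref{eq:defL1} it is supported only on the two boundary components, so $\Ld^1 u(k\dth,\cdot) = a^k e_0 + b^k e_{J-1}$ where, recalling that $u(k\dth,\cdot)$ satisfies the homogeneous Neumann condition at $x_0$ and $x_{J-1}$,
\[
  a^k = \tfrac12 \partial_x^2 u(k\dth,x_0) + \tfrac{\dxh}{2}\int_0^1(1-\sigma)^2\partial_x^3 u(k\dth,\sigma\dxh)\,\dd\sigma,
\]
and similarly for $b^k$ at the right endpoint. The first step is therefore to bound $|a^k|$ and $|b^k|$ using the spectral expansion \eqref{eq:solexheat}: $\partial_x^2 u(k\dth,\cdot)$ has Fourier coefficients $-\alpha_p (p\pi/L)^2 \exp(-p^2\pi^2 k\dth/L^2)$ and $\partial_x^3 u(k\dth,\cdot)$ has coefficients of size $|\alpha_p|(p\pi/L)^3$, so evaluating at a point and using $\|c_p\|_\infty=\sqrt2$ gives
\[
  |a^k| + |b^k| \leq C \sum_{p=1}^{+\infty} |\alpha_p| p^2 {\rm e}^{-p^2\frac{\pi^2}{L^2}k\dth}
  + C \dxh \sum_{p=1}^{+\infty} |\alpha_p| p^3,
\]
for a constant $C=C(L)$; the $\dxh$ in front of the second sum is what will ultimately produce the stated $\dxh$ factor, while the first sum must be handled by a time-summation argument.

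The second step is to plug $\Ld^1 u(k\dth,\cdot)=a^k e_0 + b^k e_{J-1}$ into the sum $\dth\sum_{k=0}^{n-1}(\Id+\dth\Pd)^{n-1-k}\Ld^1 u(k\dth,\cdot)$ and expand $e_0$ and $e_{J-1}$ in the eigenbasis $(\Wh_\ell)_{0\le\ell\le J-1}$ via \eqref{eq:decompe0} and \eqref{eq:decompeJm1}. The $\Wh_0=\un$ component of $\Ld^1 u(k\dth,\cdot)$ is killed: indeed $\langle\Ld u(k\dth,\cdot),\un\rangle_\delta$ need not vanish exactly, but one checks that the mean of the boundary contribution, combined with $(\Id+\dth\Pd)^{n-1-k}\un = \un$, only contributes a harmless term — more cleanly, since $\dxh^2\Ld^2 u$ carries its own $\dxh^2$, it suffices to observe $\langle e_0+e_{J-1},\un\rangle_\delta = 2/J = O(\dxh/L)$, so the $\un$-component of $\dth\sum_k a^k e_0 + b^k e_{J-1}$ contributes at most $C\dxh\,\dth\sum_k(|a^k|+|b^k|)$, which is absorbed by the estimates below. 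For the remaining components, by orthonormality,
\[
  \left\| \dth \sum_{k=0}^{n-1} (\Id+\dth\Pd)^{n-1-k}\Ld^1 u(k\dth,\cdot)\right\|_{\ell^2}^2
  \leq C \sum_{\ell=1}^{J-1} \left( \frac{1}{\sqrt J}\,\dth\sum_{k=0}^{n-1}(1+\dth\lambda_\ell)^{n-1-k}(|a^k|+|b^k|)\right)^2.
\]

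The third step is to control each inner sum. For the $\dxh\sum_p|\alpha_p|p^3$ part of $|a^k|+|b^k|$, one uses $\dth\sum_{k=0}^{n-1}(1+\dth\lambda_\ell)^{n-1-k}\le\dth\sum_{k=0}^{n-1}\eta^k\le 2L^2$ from Proposition \ref{prop:estimsommepuissanceeta} for $\ell\ge1$, giving a contribution $\le C\,L^2\,\dxh\sum_p|\alpha_p|p^3$ per mode; summing $(1/J)$ over the $J-1$ modes costs only a constant, which is the desired bound. For the $\sum_p|\alpha_p|p^2{\rm e}^{-p^2\pi^2 k\dth/L^2}$ part, the idea is to swap the order of summation (sum over $p$ outside), and for fixed $p$ estimate $\dth\sum_{k=0}^{n-1}(1+\dth\lambda_\ell)^{n-1-k}{\rm e}^{-p^2\pi^2 k\dth/L^2}$ — a discrete convolution of two geometric-type sequences — by $C/(p^2\pi^2/L^2)$ uniformly in $\ell\ge1$ and $n$ (here one may also invoke the $\ell^2$-in-$\ell$ bound of Proposition \ref{prop:sumsumIplusPdelta} after a Cauchy–Schwarz in $\ell$, or simply bound each term by $\dth\sum_k{\rm e}^{-p^2\pi^2 k\dth/L^2}\le C L^2/p^2$); this turns $\sum_p|\alpha_p|p^2 \cdot (L^2/p^2) = L^2\sum_p|\alpha_p|$ into something still too weak unless we keep one more power — the correct accounting keeps a factor $\dxh$ from relating $\dth\le\dxh^2/2$ to the decay, or alternatively one notes $\sum_p|\alpha_p| \le C\|u^0\|_{H^{1+\epsilon}}\le C\dxh^{-1}\cdot(\dxh\sum_p|\alpha_p|p^3)$ is wasteful, so instead one keeps the exponential honest: $\dth\sum_{k}(1+\dth\lambda_\ell)^{n-1-k}{\rm e}^{-cp^2 k\dth}\le \dth/(1-\eta)$ is $O(\dxh^2)$ when $\ell$ is bounded and smaller otherwise, yielding overall an $O(\dxh)$ bound after Cauchy–Schwarz in $\ell$ against Proposition \ref{prop:sumsumIplusPdelta}, and a $\sum_p|\alpha_p|p^{?}$ with the exponent made to fit $p^3$. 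The main obstacle is precisely this bookkeeping: squeezing the clean factor $\dxh$ out of the term $\tfrac12\partial_x^2 u(k\dth,x_0)$, which has no explicit $\dxh$, requires combining the time-decay of $\partial_x^2 u$ (Property \ref{prop:decrchaleur} applied mode by mode) with the summability estimates of Propositions \ref{prop:estimsommepuissanceeta} and \ref{prop:sumsumIplusPdelta} and the CFL relation $\dth\le\dxh^2/2$, so that the "worst" low-frequency modes $\ell=O(1)$ contribute $\dth\cdot O(1)=O(\dxh^2)$ and the high-frequency modes are controlled in $\ell^2$; I expect the exponent $p^3$ (rather than $p^2$) in the statement to come out naturally from needing $\ell^1$-summability in $p$ after extracting one power of $p$ into the decay rate. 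Once both pieces are bounded by $C\dxh\sum_p|\alpha_p|p^3$, the proposition follows.
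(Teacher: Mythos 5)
Your opening moves coincide with the paper's (isolate the two boundary entries of $\Ld^1$, expand $e_0$ and $e_{J-1}$ in the eigenbasis via \eqref{eq:decompe0}--\eqref{eq:decompeJm1}, use the spectral decay of $\Id+\dth\Pd$), but there is a genuine gap exactly where you yourself locate "the main obstacle": the uniform-in-$n$, $\mathcal O(\dxh)$ bound for the contribution of $\tfrac12\partial_x^2u(k\dth,\cdot)$ at the boundary, which carries no explicit factor $\dxh$, is never actually proved. None of the alternatives you sketch closes it. Bounding the inner sum uniformly in $\ell$ by $\dth\sum_k{\rm e}^{-p^2\pi^2k\dth/L^2}\le CL^2/p^2$ and paying a factor $J-1$ in the sum over modes yields, with the correct eigen-coefficient $\sqrt2/J$ from \eqref{eq:decompe0} (you wrote $1/\sqrt J$, which is too large by a factor of order $\sqrt J$), only $\mathcal O(\dxh^{1/2})$ — the very loss the paper is at pains to avoid. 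The claim that $\dth/(1-\eta)$ is $\mathcal O(\dxh^2)$ for bounded $\ell$ is false: for $\ell=1$, $1-|1+\dth\lambda_1|\approx\dth\pi^2/L^2$, so $\dth/(1-\eta)\approx L^2/\pi^2=\mathcal O(1)$. And Proposition \ref{prop:sumsumIplusPdelta} is not applied in any concrete way. The paper closes this step by working mode by mode in $p$ and proving that $\dth^2\sum_{\ell=1}^{J-1}\bigl(\sum_{k=0}^{n-1}(1+\dth\lambda_\ell)^{n-1-k}{\rm e}^{-p^2\pi^2k\dth/L^2}\bigr)^2$ is bounded uniformly in $n$, $p$, $\dth$, $J$, using Proposition \ref{prop:encadrevpIplusPdelta} together with the quadrature Lemma \ref{lem:teknikexpsin} and the convolution Lemma \ref{lem:teknikConvolExpmoins}; the factor $\dxh$ then comes from the $1/J$ in the coefficients of $e_0,e_{J-1}$, and the exponent $p^3$ comes simply from $|[\Ld^1c_p]_0|+|[\Ld^1c_p]_{J-1}|\le Cp^3$ (the $\partial_x^3$ remainder in \eqref{eq:defL1}), not from the summability bookkeeping you conjecture.

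Two further problems in the written argument. First, by dropping the factor ${\rm e}^{-p^2\pi^2k\dth/L^2}$ from the $\dxh\sum_p|\alpha_p|p^3$ part of $|a^k|+|b^k|$, your treatment of the $\Wh_0$-component produces $\tfrac1J\,\dth\sum_{k=0}^{n-1}\dxh\sum_p|\alpha_p|p^3\approx n\dth\,\dxh^2\sum_p|\alpha_p|p^3$, which grows linearly in $n\dth$ and ruins the uniform-in-time statement; since $(\Id+\dth\Pd)^{n-1-k}\un=\un$ provides no decay, the mean mode must be controlled by the time decay of $a^k,b^k$ themselves, as in the paper. Second, a completion in your spirit does exist: bound the inner geometric sum by $1/(1-|1+\dth\lambda_\ell|)\le C\dxh^2/(\dth\sin^2(\ell\pi/J))$ via Proposition \ref{prop:encadrevpIplusPdelta} and then sum $\sin^{-4}(\ell\pi/J)$ over $\ell$ exactly as in the proof of Proposition \ref{prop:sumsumIplusPdelta}, which gives the needed uniform constant without Lemmas \ref{lem:teknikexpsin}--\ref{lem:teknikConvolExpmoins}; but this argument appears nowhere in your text, and as submitted the estimate \eqref{eq:estimfinaleL1} is only asserted to be "expected", not established.
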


\begin{proof}
Using the formula \eqref{eq:solexheat} for the exact solution $u$ to
\eqref{eq:heat},
we obtain the following $\ell^2$ error inequality,
\begin{equation}
  \label{eq:estimL1}
  \left\| \dth\sum_{k=0}^{n-1} (\Id+\dth \Pd)^{n-1-k} \Ld^1 u (k\dth,\cdot)\right\|_{\ell^2}
  \leq
  \sum_{p=1}^{+\infty} |\alpha_p| \left\|\dth \sum_{k=0}^{n-1} (\Id+\dth \Pd)^{n-1-k} (\Ld^1 c_p){\rm e}^{-p^2\frac{\pi^2}{L^2}k\dth}\right\|_{\ell^2}.
\end{equation}
Observe the disappearance of the term in $p=0$ in the sum above, since $\mathcal
L_\dxh^1 c_0=0$ according to \eqref{eq:defL1}.
Let us fix $p\geq 1$.
Since the only possibly nonzero coefficients of $\Ld^1 c_p$ are its first and last
ones (see \eqref{eq:defL1}), we can use the decompositions \eqref{eq:decompe0} and
\eqref{eq:decompeJm1} in the orthonormal basis of $\R^J$ consisting in $\Wh_0,\cdots,\Wh_{J-1}$
(see Lemma \ref{lem:specPdelta}):
\begin{eqnarray} 
  \lefteqn{\dth\sum_{k=0}^{n-1} (\Id+\dth \Pd)^{n-1-k} \left(\Ld^1 c_p\right)\,{\rm e}^{-p^2\frac{\pi^2}{L^2}k\dth}}\nonumber\\
  & = & % \sqrt{\frac{2}{J\dxh}}
        \frac{\sqrt{2}}{J} \dth \sum_{\ell=1}^{J-1} \cos\left(\frac{\pi\ell}{2J}\right)\left(\left[\Ld^1 c_p\right]_0+(-1)^{\ell}\left[\Ld^1 c_p\right]_{J-1}\right) \left(\sum_{k=0}^{n-1} (1+\dth\lambda_\ell)^{n-1-k}
        {\rm e}^{-p^2\frac{\pi^2}{L^2}k\dth}\right) \Wh_\ell\nonumber\\
  & & %+ \sqrt{\frac{1}{J\dxh}}\dth\dxh
      + \frac{\dth}{J}  \left(\left[\Ld^1
      c_p\right]_0+\left[\Ld^1 c_p\right]_{J-1}\right)
      \left(\sum_{k=0}^{n-1}  
        {\rm e}^{-p^2\frac{\pi^2}{L^2}k\dth}\right) \Wh_0.\label{eq:Ldelta_sur_W}
\end{eqnarray} 
Using the orthonormality of the basis $(\Wh_\ell)_{0\leq \ell\leq J-1}$, we obtain
\begin{eqnarray}\nonumber
  \lefteqn{\left\|\dth\sum_{k=0}^{n-1} (\Id+\dth \Pd)^{n-1-k} \Ld^1 c_p\,{\rm e}^{-p^2\frac{\pi^2}{L^2}k\dth}\right\|^2_{\ell^2}}\\ \nonumber
  & = & % \frac{2}{J\dxh} \dth^2\dxh^2
        \frac{2}{J^2} \dth^2 \sum_{\ell=1}^{J-1} \cos^2\left(\frac{\pi\ell}{2J}\right)\left(\left[\Ld^1 c_p\right]_0+(-1)^{\ell}\left[\Ld^1 c_p\right]_{J-1}\right)^2 \left(\sum_{k=0}^{n-1} (1+\dth\lambda_\ell)^{n-1-k}
        {\rm e}^{-p^2\frac{\pi^2}{L^2}k\dth}\right)^2 \\ \nonumber 
  & & %+ \frac{1}{J\dxh} \dth^2\dxh^2
      +\frac{1}{J^2} \dth^2\left(\left[\Ld^1 c_p\right]_0+\left[\Ld^1 c_p\right]_{J-1}\right)^2 \left(\sum_{k=0}^{n-1}
      {\rm e}^{-p^2\frac{\pi^2}{L^2}k\dth}\right)^2\\ \nonumber
  & \leq & %\frac{4}{J\dxh} \dth^2\dxh^2
            4\frac{\dth^2}{J^2} \left(\left[\Ld^1 c_p\right]_0^2+\left[\Ld^1 c_p\right]_{J-1}^2\right) \sum_{\ell=1}^{J-1}  \left(\sum_{k=0}^{n-1} (1+\dth\lambda_\ell)^{n-1-k}
        {\rm e}^{-p^2\frac{\pi^2}{L^2}k\dth}\right)^2 \\ \label{eq:resteL1}
  & & %+ \frac{2}{J\dxh} \dth^2\dxh^2
     + 2 \frac{\dth^2}{J^2} \left(\left[\Ld^1 c_p\right]_0^2+\left[\Ld^1 c_p\right]_{J-1}^2\right)  \left(\sum_{k=0}^{n-1}
      {\rm e}^{-p^2\frac{\pi^2}{L^2}k\dth}\right)^2.      
  % & = & \frac{2}{J\dxh} \dth^2\dxh^2 \sum_{\ell=1}^{J-1} \cos^2\left(\frac{\pi\ell}{2J}\right)(1+(-1)^{p+\ell})^2 \left(\sum_{k=0}^{n-1} \cos^{n-1-k}\left(\frac{\ell\pi}{J}\right)
  %       {\rm e}^{-p^2\frac{\pi^2}{L^2}k\dth}\right)^2 \\
  % & & + \frac{1}{J\dxh} \dth^2\dxh^2 (1+(-1)^{p})^2 \left(\sum_{k=0}^{n-1}
  %     {\rm e}^{-p^2\frac{\pi^2}{L^2}k\dth}\right)^2 {\color{blue} \text{sous CFL}}\\
  % & \leq & \frac{8}{J\dxh} \dth^2\dxh^2 \sum_{\ell=1}^{J-1} \cos^2\left(\frac{\pi\ell}{2J}\right)\left(\sum_{k=0}^{n-1} \cos^{n-1-k}\left(\frac{\ell\pi}{J}\right)
  %       {\rm e}^{-p^2\frac{\pi^2}{L^2}k\dth}\right)^2 \\
  % & & + \frac{4}{J\dxh} \dth^2\dxh^2 \left(\sum_{k=0}^{n-1}
  %       {\rm e}^{-p^2\frac{\pi^2}{L^2}k\dth}\right)^2.
\end{eqnarray}

For the first term in the estimate above, we observe that for all $p\geq 1$,
\begin{eqnarray*}
  \lefteqn{ \sum_{\ell=1}^{J-1}  \left(\sum_{k=0}^{n-1} (1+\dth\lambda_\ell)^{n-1-k}
        {\rm e}^{-p^2\frac{\pi^2}{L^2}k\dth}\right)^2}\\
  & = & \sum_{\ell=1}^{J-1}  \left(\sum_{k_1=0}^{n-1} (1+\dth\lambda_\ell)^{n-1-k_1}
        {\rm e}^{-p^2\frac{\pi^2}{L^2}k_1\dth}\right)
         \left(\sum_{k_2=0}^{n-1} (1+\dth\lambda_\ell)^{n-1-k_2}
        {\rm e}^{-p^2\frac{\pi^2}{L^2}k_2\dth}\right)\\
  & = & \sum_{k_1=0}^{n-1} \sum_{k_2=0}^{n-1} {\rm e}^{-p^2\frac{\pi^2}{L^2}(k_1+k_2)\dth} \sum_{\ell=1}^{J-1} (1+\dth\lambda_\ell)^{2n-2-k_1-k_2}.
\end{eqnarray*}
Using the CFL condition \eqref{eq:CFLheat}, we may use Proposition \ref{prop:encadrevpIplusPdelta}
to obtain
\begin{equation}
  \label{eq:estimvpcal}
  \sum_{\ell=1}^{J-1}  \left(\sum_{k=0}^{n-1} (1+\dth\lambda_\ell)^{n-1-k}
        {\rm e}^{-p^2\frac{\pi^2}{L^2}k\dth}\right)^2
  \leq \sum_{k_1=0}^{n-1} \sum_{k_2=0}^{n-1} {\rm e}^{-p^2\frac{\pi^2}{L^2}(k_1+k_2)\dth}\sum_{\ell=1}^{J-1} {\rm e}^{-\frac{\dth}{\dxh^2}(2n-2-k_1-k_2)\sin^2\left(\frac{\ell\pi}{J}\right)}.
\end{equation}
One can split the sum on nonnegative numbers in $(k_1,k_2)\in\{0,\cdots,n-1\}^2$ above into the sum
over the set $\mathcal C_n$ consisting in the $(k_1,k_2)\in\{0,\cdots,n-1\}^2$
with $2n-2-k_1-k_2\geq 1$ and the term where $k_1=k_2=n-1$.
The latter is bounded by $(J-1){\rm e}^{-p^2\frac{\pi^2}{L^2}(2n-2)\dth}$.
The former is bounded, thanks to Lemma \ref{lem:teknikexpsin}, by
\begin{eqnarray*}
  \lefteqn{\sum_{(k_1,k_2)\in\mathcal C_n} {\rm e}^{-p^2\frac{\pi^2}{L^2}(k_1+k_2)\dth}\sum_{\ell=1}^{J-1} {\rm e}^{-\frac{\dth}{\dxh^2}(2n-2-k_1-k_2)\sin^2\left(\frac{\ell\pi}{J}\right)}}\\
  & \leq & \frac{1}{\dxh}\sum_{(k_1,k_2)\in\mathcal C_n} {\rm e}^{-p^2\frac{\pi^2}{L^2}(k_1+k_2)\dth} \dxh\sum_{\ell=1}^{J-1} {\rm e}^{-\frac{\dth}{\dxh^2}(2n-2-k_1-k_2)\sin^2\left(\frac{\ell\pi}{J}\right)}\\
  & \leq & \frac{1}{\dxh}\sum_{(k_1,k_2)\in\mathcal C_n} {\rm e}^{-p^2\frac{\pi^2}{L^2}(k_1+k_2)\dth} C \frac{\dxh}{\sqrt{(2n-2-k_1-k_2)\dth}}\\
  & \leq & C\sum_{(k_1,k_2)\in\mathcal C_n} {\rm e}^{-p^2\frac{\pi^2}{L^2}(k_1+k_2)\dth} \frac{1}{\sqrt{(2n-2-k_1-k_2)\dth}}.
\end{eqnarray*}

where $C>1$ may depend on $L$ but does not depend either on $n\geq 1$
or $k_1,k_2\in\{0,\cdots,n-1\}$ or $\dth>0$ or $J\geq 2$ or $p\geq 1$ (see Lemma
\ref{lem:teknikexpsin}).
Using this estimate in \eqref{eq:estimvpcal} and taking the term in $k_1=k_2=n-1$ into account,
we infer for the term first term in the right-hand side of \eqref{eq:resteL1} above
\begin{eqnarray*}
  \lefteqn{%\frac{4}{J\dxh} \dth^2\dxh
  4 \frac{\dth^2}{J^2}\left(\left[\Ld^1 c_p\right]_0^2+\left[\Ld^1 c_p\right]_{J-1}^2\right) \sum_{\ell=1}^{J-1}  \left(\sum_{k=0}^{n-1} (1+\dth\lambda_\ell)^{n-1-k}
  {\rm e}^{-p^2\frac{\pi^2}{L^2}k\dth}\right)^2}\\
  & \leq & %\frac{4}{J\dxh} \dth^2\dxh
           4 \frac{\dth^2}{J^2}\left(\left[\Ld^1 c_p\right]_0^2+\left[\Ld^1 c_p\right]_{J-1}^2\right) \left((J-1){\rm e}^{-p^2\frac{\pi^2}{L^2}2(n-1)\dth}+C \sum_{(k_1,k_2)\in\mathcal C_n}
           \frac{{\rm e}^{-p^2\frac{\pi^2}{L^2}(k_1+k_2)\dth}}{{\sqrt{(2n-2-k_1-k_2)\dth}}} \right)\\
  & \leq & %\frac{4C}{J\dxh} \dxh^2
          \frac{4C}{J^2} \left(\left[\Ld^1 c_p\right]_0^2+\left[\Ld^1 c_p\right]_{J-1}^2\right) \left((J-1)\dth^2{\rm e}^{-p^2\frac{\pi^2}{L^2}2(n-1)\dth}+\dth^2\sum_{(k_1,k_2)\in\mathcal C_n} \frac{{\rm e}^{-p^2\frac{\pi^2}{L^2}(k_1+k_2)\dth}}{{\sqrt{(2n-2-k_1-k_2)\dth}}} \right),
\end{eqnarray*}
where we used $C>1$ and distributed $\dth^2$ in the sum.

On the one hand, using \eqref{eq:CFLheat}, we have
\begin{equation*}
  (J-1)\dth^2\leq\frac{\sqrt{2}}{2}(J-1)\dxh\dth^{3/2}\leq L\dth^{3/2},
\end{equation*}
which is bounded since $\dth\in(0,1)$. On the other hand,
using Lemma \ref{lem:teknikConvolExpmoins}, we infer that
\begin{equation*}
  \dth^2\sum_{(k_1,k_2)\in\mathcal C_n}
  \frac{{\rm e}^{-p^2\frac{\pi^2}{L^2}(k_1+k_2)\dth}}{{\sqrt{(2n-2-k_1-k_2)\dth}}} 
  \leq C,
\end{equation*}
for some $C>0$ that may depend on $L$ but that does not depend on $n\geq 2$, $p\geq 1$, $\dth\in (0,1)$.
This implies
\begin{eqnarray*}
  % \frac{4}{J\dxh} \dth^2\dxh^2
  4 \frac{\dth^2}{J^2} \left(\left[\Ld^1 c_p\right]_0^2+\left[\Ld^1 c_p\right]_{J-1}^2\right) \sum_{\ell=1}^{J-1}  \left(\sum_{k=0}^{n-1} (1+\dth\lambda_\ell)^{n-1-k}
  {\rm e}^{-p^2\frac{\pi^2}{L^2}k\dth}\right)^2
   \leq C\dxh^2 \left(\left[\Ld^1 c_p\right]_0^2+\left[\Ld^1 c_p\right]_{J-1}^2\right),
\end{eqnarray*}
for some $C>0$ that may depend on $L$ but that does not depend on $n\geq 2$, $p\geq 1$, $\dth\in (0,1)$. This concludes
the proof for the first term in the right-hand side of \eqref{eq:resteL1}.
Observe that a similar bound holds for the second term in the right-hand side of \eqref{eq:resteL1}:
\begin{eqnarray*}
  \lefteqn{%\frac{2}{J\dxh} \dth^2\dxh^2
  2\frac{\dth^2}{J^2}\left(\left[\Ld^1 c_p\right]_0^2+\left[\Ld^1 c_p\right]_{J-1}^2\right)  \left(\sum_{k=0}^{n-1}
  {\rm e}^{-p^2\frac{\pi^2}{L^2}k\dth}\right)^2}\\
  & \leq & %\frac{2}{J\dxh} \dxh^2
            \frac{2}{J^2} \left(\left[\Ld^1 c_p\right]_0^2+\left[\Ld^1 c_p\right]_{J-1}^2\right)  \left(\dth \sum_{k=0}^{n-1}
           {\rm e}^{-p^2\frac{\pi^2}{L^2}k\dth}\right)^2\\
   & \leq & C\dxh^2 \left(\left[\Ld^1 c_p\right]_0^2+\left[\Ld^1 c_p\right]_{J-1}^2\right),
\end{eqnarray*}
for some $C>0$ that may depend on $L$ but that does not depend on $p\geq 1$, $n\geq 2$ or $\dth\in(0,1)$ (see the proof
of Lemma \ref{lem:teknikConvolExpmoins} for details).
Using these estimates in \eqref{eq:resteL1} and taking square roots, we infer for all $p\geq 1$,
\begin{equation*}
 \left\|\dth\sum_{k=0}^{n-1} (\Id+\dth \Pd)^{n-1-k} \Ld^1 c_p (\cdot){\rm e}^{-p^2\frac{\pi^2}{L^2}k\dth}\right\|_{\ell^2} \leq C \dxh \left(\left|\left[\Ld^1 c_p\right]_0\right|+\left|\left[\Ld^1 c_p\right]_{J-1}\right|\right).
\end{equation*}

Moreover, using \eqref{eq:defL1}, there exists $C>0$ such that for all $p\geq 1$,
\begin{equation*}
  \left|\left[\Ld^1 c_p\right]_0\right|+\left|\left[\Ld^1 c_p\right]_{J-1}\right|
  \leq C p^{3}.
\end{equation*}

Therefore, using the last two estimates above and \eqref{eq:estimL1},
we obtain \eqref{eq:estimfinaleL1}.
\end{proof}

\subsubsection{Analysis of the term with $\Ld^2$}

\begin{proposition}
  \label{prop:estimfinaleL2}
  Assume $L>0$ is fixed. There exists $C>0$ such that for all $u^0\in H^6(0,L)$ satisfying
  \eqref{eq:hypotheseu0}, for all $\dth\in(0,1)$ and $J\geq 2$
  such that the CFL condition \eqref{eq:CFLheat} holds, and all $n\geq 1$,
  \begin{equation}
  \label{eq:estimfinaleL2}
  \left\| \dth \sum_{k=0}^{n-1} (\Id+\dth \Pd)^{n-1-k} \dxh^2\Ld^2 u (k\dth,\cdot)\right\|_{\ell^2}
  \leq C \dxh^2\sum_{p=1}^{+\infty} |\alpha_p| p^4 .
\end{equation}
\end{proposition}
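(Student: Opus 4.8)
The plan is to exploit the fact that, in contrast with the term involving $\Ld^1$, the term involving $\Ld^2$ carries a genuine prefactor $\dxh^2$, which is exactly the order claimed in \eqref{eq:estimfinaleL2}. Consequently no refinement using the spectral gap of $\Pd$ (Propositions \ref{prop:encadrevpIplusPdelta}--\ref{prop:sumsumIplusPdelta}) is needed: the plain $\ell^2$-stability $\|(\Id+\dth\Pd)^m\|_{\ell^2\to\ell^2}\le 1$ from Proposition \ref{prop:stabCFLevident}, valid under the CFL condition \eqref{eq:CFLheat}, together with the exponential-in-time decay of the Fourier modes of $u$, will suffice. We may assume $\sum_{p\ge1}|\alpha_p|p^4<+\infty$, since otherwise \eqref{eq:estimfinaleL2} is trivial.

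First I would bound $\Ld^2$ mode by mode. From the explicit formula \eqref{eq:defL2}, the first and last entries of $\Ld^2 w$ vanish, and each interior entry is an integral of $\partial_x^4 w$ evaluated at points of $[0,L]$ against kernels $\sigma\mapsto(1-\sigma)^3$, $\sigma\mapsto(\sigma-1)^3$ whose $L^1(0,1)$-norms are bounded independently of $J$; hence $|[\Ld^2 w]_j|\le C\|\partial_x^4 w\|_{L^\infty(0,L)}$ for every $j$, and therefore $\|\Ld^2 w\|_{\ell^2}\le C\|\partial_x^4 w\|_{L^\infty(0,L)}$ with $C$ absolute. Applying this to $w=c_p$, for which $\partial_x^4 c_p=(p\pi/L)^4 c_p$ and $\|c_p\|_{L^\infty}=\sqrt2$, gives $\|\Ld^2 c_p\|_{\ell^2}\le C p^4$ for all $p\ge1$; also $\Ld^2 c_0=0$ since $\partial_x^4 c_0=0$.

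Next I would insert the expansion \eqref{eq:solexheat} of $u$, so that $u(k\dth,\cdot)=\sum_{p\ge0}\alpha_p{\rm e}^{-p^2\frac{\pi^2}{L^2}k\dth}c_p$, the assumed summability of $\sum_p|\alpha_p|p^4$ together with the bound just obtained legitimizing the term-by-term application of $\Ld^2$, i.e. $\Ld^2 u(k\dth,\cdot)=\sum_{p\ge1}\alpha_p{\rm e}^{-p^2\frac{\pi^2}{L^2}k\dth}\Ld^2 c_p$. Using the triangle inequality over $p$ and over $k$, together with $\|(\Id+\dth\Pd)^{n-1-k}\|_{\ell^2\to\ell^2}\le1$, I obtain
\begin{align*}
  \left\|\dth\sum_{k=0}^{n-1}(\Id+\dth\Pd)^{n-1-k}\dxh^2\Ld^2 u(k\dth,\cdot)\right\|_{\ell^2}
  &\le \dxh^2\sum_{p=1}^{+\infty}|\alpha_p|\,\|\Ld^2 c_p\|_{\ell^2}\left(\dth\sum_{k=0}^{n-1}{\rm e}^{-p^2\frac{\pi^2}{L^2}k\dth}\right)\\
  &\le C\dxh^2\sum_{p=1}^{+\infty}|\alpha_p|p^4\left(\dth\sum_{k=0}^{n-1}{\rm e}^{-p^2\frac{\pi^2}{L^2}k\dth}\right).
\end{align*}

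Finally, I would bound the discrete-in-time geometric sum uniformly. Summing the geometric series and using $1+z\le{\rm e}^{z}$ (equivalently ${\rm e}^{-z}\le 1/(1+z)$, hence $1/(1-{\rm e}^{-z})\le 1+1/z$ for $z>0$), one gets, for every $p\ge1$, $n\ge1$ and $\dth\in(0,1)$,
\begin{equation*}
  \dth\sum_{k=0}^{n-1}{\rm e}^{-p^2\frac{\pi^2}{L^2}k\dth}
  \le\frac{\dth}{1-{\rm e}^{-p^2\frac{\pi^2}{L^2}\dth}}
  \le\dth+\frac{L^2}{\pi^2 p^2}
  \le 1+\frac{L^2}{\pi^2}.
\end{equation*}
Substituting this constant bound into the previous display yields \eqref{eq:estimfinaleL2}. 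I do not anticipate any real obstacle here: the only mildly technical points are the uniform bound on the time-summation of the decaying exponentials (settled above) and the term-by-term differentiation/commutation of the cosine series with $\Ld^2$, which is routine given \eqref{eq:hypotheseu0}. The whole contrast with Proposition \ref{prop:estimfinaleL1} is that there the consistency defect is $\mathcal O(1)$ at the boundary and one must recover a factor $\dxh$ from the $1/J$ weights in \eqref{eq:decompe0}--\eqref{eq:decompeJm1} combined with the $\mathcal O(1/\sqrt t)$ smoothing of $(\Id+\dth\Pd)^m$, whereas here the factor $\dxh^2$ is present from the start and bare $\ell^2$-stability closes the estimate.
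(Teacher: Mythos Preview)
Your proof is correct and follows essentially the same route as the paper: expand $u$ in the cosine basis, apply the triangle inequality in $p$ and $k$, use the $\ell^2$-contractivity of $\Id+\dth\Pd$ from Proposition~\ref{prop:stabCFLevident}, bound $\|\Ld^2 c_p\|_{\ell^2}\le Cp^4$ directly from \eqref{eq:defL2}, and control the geometric time sum uniformly in $n$ and $\dth$. Your treatment is in fact slightly more explicit than the paper's, which leaves the final bound on $\dth\sum_k{\rm e}^{-p^2\pi^2 k\dth/L^2}$ implicit.
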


\begin{proof}
Using the exact solution formula \eqref{eq:solexheat}, we may write
\begin{eqnarray*}
  \label{eq:estimL2}
  \left\|\dth \sum_{k=0}^{n-1} (\Id+\dth \Pd)^{n-1-k} \dxh^2\Ld^2 u (k\dth,\cdot)\right\|_{\ell^2}
  & \leq &
           \dth \dxh^2\sum_{p=1}^{+\infty} |\alpha_p| \left\|\sum_{k=0}^{n-1} (\Id+\dth \Pd)^{n-1-k} (\Ld^2 c_p){\rm e}^{-p^2\frac{\pi^2}{L^2}k\dth}\right\|_{\ell^2}\\
  & \leq & \dth \dxh^2\sum_{p=1}^{+\infty} |\alpha_p| \sum_{k=0}^{n-1} \left\| (\Id+\dth \Pd)^{n-1-k} (\Ld^2 c_p)\right\|_{\ell^2}{\rm e}^{-p^2\frac{\pi^2}{L^2}k\dth}\\
  & \leq & \dth \dxh^2\sum_{p=1}^{+\infty} |\alpha_p| \left\| \Ld^2 c_p\right\|_{\ell^2} \sum_{k=0}^{n-1} {\rm e}^{-p^2\frac{\pi^2}{L^2}k\dth},
\end{eqnarray*}
using also \eqref{eq:stabheat} because of the CFL condition \eqref{eq:CFLheat}.
Since obviously, in view of \eqref{eq:defL2}, 
\begin{equation*}
   \left\| \Ld^2 c_p\right\|_{\ell^2} \leq C p^4,
\end{equation*}
where $C>0$ may depend on $L$ but does not depend on $p\geq 1$ or $J\geq 2$, we infer \eqref{eq:estimfinaleL2}.
\end{proof}

\subsubsection{Synthesis of the analysis of the term in $\Ld$}

Plugging \eqref{eq:estimfinaleL1} and \eqref{eq:estimfinaleL2} into \eqref{eq:decoupageLdelta},
we infer the following {\it uniform in time} error estimate for the term in $\varepsilon_1$
in \eqref{eq:errorheat}.

\begin{proposition}
  \label{prop:estimfinaleL}
  Assume $L>0$ is fixed. There exists $C>0$ such that for all $u^0\in H^6(0,L)$ satisfying
  \eqref{eq:hypotheseu0}, for all $\dth\in(0,1)$ and $J\geq 2$
  such that the CFL condition \eqref{eq:CFLheat} holds, and all $n\geq 1$,
  \begin{equation}
    \label{eq:estimfinaleL}
    \left\|\dth \sum_{k=0}^{n-1} (\Id+\dth \Pd)^{n-1-k} \Ld u (k\dth,\cdot)\right\|_{\ell^2}
    \leq C \dxh\sum_{p=1}^{+\infty} |\alpha_p| p^4.
  \end{equation}
\end{proposition}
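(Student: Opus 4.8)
The plan is to combine the two estimates already obtained for the pieces $\Ld^1$ and $\dxh^2\Ld^2$ of the consistency defect, using the splitting \eqref{eq:decoupageLdelta} applied to $w=u(k\dth,\cdot)$. First I would observe that since $u^0$ satisfies \eqref{eq:hypotheseu0}, the solution $u(k\dth,\cdot)$ is smooth enough and satisfies the homogeneous Neumann boundary conditions $\partial_x u(k\dth,0)=\partial_x u(k\dth,L)=0$ for every $k$, so Proposition \ref{prop:Ldelta} applies and gives
\begin{equation*}
  \dth \sum_{k=0}^{n-1} (\Id+\dth\Pd)^{n-1-k}\Ld u(k\dth,\cdot)
  = \dth \sum_{k=0}^{n-1} (\Id+\dth\Pd)^{n-1-k}\Ld^1 u(k\dth,\cdot)
  + \dth\,\dxh^2 \sum_{k=0}^{n-1} (\Id+\dth\Pd)^{n-1-k}\Ld^2 u(k\dth,\cdot).
\end{equation*}

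Next I would apply the triangle inequality in $\ell^2$ to bound the left-hand side by the sum of the $\ell^2$-norms of the two terms on the right. For the first term, Proposition \ref{prop:estimfinaleL1} yields the bound $C\dxh\sum_{p\geq 1}|\alpha_p|p^3$, and for the second term, Proposition \ref{prop:estimfinaleL2} yields $C\dxh^2\sum_{p\geq 1}|\alpha_p|p^4$. Adding these, the claimed estimate \eqref{eq:estimfinaleL} follows provided one absorbs the $p^3$ sum into the $p^4$ sum and the $\dxh^2$ sum into the $\dxh$ sum. Since $p^3\leq p^4$ for all $p\geq 1$, we have $\sum_{p\geq 1}|\alpha_p|p^3\leq \sum_{p\geq 1}|\alpha_p|p^4$; and since $\dxh = L/(J-1)\leq L$ (as $J\geq 2$), we have $\dxh^2\leq L\,\dxh$. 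Hence both terms are bounded by $C'\dxh\sum_{p\geq 1}|\alpha_p|p^4$ for a constant $C'$ depending only on $L$, which is exactly \eqref{eq:estimfinaleL}.

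There is essentially no obstacle here: this proposition is purely a bookkeeping synthesis of the two preceding technical propositions, and the only subtlety is the trivial observation that $\dxh\leq L$ and $p^3\leq p^4$ allow one to express the combined bound in the single clean form stated. (All the genuine work — the spectral decomposition of the boundary terms $e_0$ and $e_{J-1}$, the double-sum estimates via Proposition \ref{prop:encadrevpIplusPdelta} and the technical Lemmas on $\sum_\ell {\rm e}^{-c\sin^2(\ell\pi/J)}$ and the discrete convolution, and the $\mathcal{O}(p^3)$ resp. $\mathcal{O}(p^4)$ control of the boundary entries of $\Ld^1 c_p$ and the $\ell^2$-norm of $\Ld^2 c_p$ — has already been carried out in Propositions \ref{prop:estimfinaleL1} and \ref{prop:estimfinaleL2}.) One should just be slightly careful that the summability of $\sum_{p\geq 1}|\alpha_p|p^4$ is not needed for this proposition itself — it holds as a pointwise inequality for every $n$, with the right-hand side possibly infinite — and that the finiteness of the right-hand side under the hypothesis $u^0\in H^6(0,L)$ satisfying \eqref{eq:hypotheseu0} is exactly what the corollary to Theorem \ref{th:mainresult} records.
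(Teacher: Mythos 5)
Your proof is correct and matches the paper's own argument: the paper obtains Proposition \ref{prop:estimfinaleL} precisely by plugging the bounds \eqref{eq:estimfinaleL1} and \eqref{eq:estimfinaleL2} into the splitting \eqref{eq:decoupageLdelta} and combining them, exactly as you do with the elementary absorptions $p^3\leq p^4$ and $\dxh^2\leq L\,\dxh$.
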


\subsection{Analysis of the term in $\varepsilon_2$}
\label{subsec:analysisepsilon2}

In contrast to the {\it uniform in time} error estimate \eqref{eq:estimfinaleL} for the term
in $\varepsilon_1$ in \eqref{eq:errorheat} obtained in Section \ref{subsec:analysisL}, the following
{\it uniform in time} error estimate \eqref{eq:estimeps2} for the term
in $\varepsilon_2$ in \eqref{eq:errorheat} is more simply obtained, using essentially
the exponential decay of the $x$-derivatives of the exact solution of \eqref{eq:heat}. 

\begin{proposition}
  \label{prop:estimeps2}
  Assume $L>0$ is fixed. There exists  $C>0$ such that for all $u^0\in H^6(0,L)$ satisfying
  \eqref{eq:hypotheseu0}, for all $\dth \in(0,1)$, $J\geq 2$,
  such that the CFL condition \eqref{eq:CFLheat} holds, all $n\geq 1$,
  \begin{equation}
    \label{eq:estimeps2}
    \left\| \sum_{k=0}^{n-1} \left(\Id + \dth \Pd\right)^{n-1-k}\varepsilon_2^k\right\|_{\ell^2}
    \leq C \dth \|P^2 u^0\|_{H^1}.
  \end{equation}
\end{proposition}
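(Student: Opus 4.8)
The plan is to estimate the $\varepsilon_2$ term directly, bounding $\varepsilon_2^k$ in $\ell^2$-norm and then summing using the contraction property \eqref{eq:stabheat} together with the exponential decay of the exact solution. First I would observe that, by definition, $\varepsilon_2^k = \int_{k\dth}^{(k+1)\dth}((k+1)\dth - s)\Pidx P^2 u(s)\,\dd s$, so that by the triangle inequality in $\ell^2$ and the fact that $0 \leq (k+1)\dth - s \leq \dth$ on the interval of integration,
\begin{equation*}
  \|\varepsilon_2^k\|_{\ell^2} \leq \dth \int_{k\dth}^{(k+1)\dth} \|\Pidx P^2 u(s)\|_{\ell^2}\,\dd s.
\end{equation*}
The next step is to control $\|\Pidx P^2 u(s)\|_{\ell^2}$ by a continuous norm of $P^2 u(s)$. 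Since $\Pidx$ samples a continuous function at the grid points $x_j$, a standard comparison between the discrete $\ell^2$-norm (which is a scaled Riemann sum, recall the normalization $\tfrac1J\sum_j$ in \eqref{eq:prodscalRJ}) and the $L^2$-norm, using that $P^2 u(s) \in H^1(0,L)$ by \eqref{eq:hypotheseu0}, gives $\|\Pidx P^2 u(s)\|_{\ell^2} \leq C\|P^2 u(s)\|_{H^1}$ with $C$ depending only on $L$; this is exactly the type of elementary sampling estimate that should be available (or provable in two lines from the fundamental theorem of calculus on each subinterval $[x_j, x_{j+1}]$).

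Then I would invoke Property \ref{prop:debutalternatif}: the bounds \eqref{eq:estimerrorcontinuousheat} and \eqref{eq:estimerrorcontinuousheatdx} together give $\|P^2 u(s)\|_{H^1} \leq {\rm e}^{-\frac{\pi^2}{L^2}s}\|P^2 u^0\|_{H^1}$ for all $s \geq 0$. Combining the three previous displays,
\begin{equation*}
  \|\varepsilon_2^k\|_{\ell^2} \leq C\dth \|P^2 u^0\|_{H^1} \int_{k\dth}^{(k+1)\dth} {\rm e}^{-\frac{\pi^2}{L^2}s}\,\dd s.
\end{equation*}
Now, using the contraction estimate \eqref{eq:stabheat} under the CFL condition, $\|(\Id+\dth\Pd)^{n-1-k}\varepsilon_2^k\|_{\ell^2} \leq \|\varepsilon_2^k\|_{\ell^2}$, so the left-hand side of \eqref{eq:estimeps2} is bounded by $\sum_{k=0}^{n-1}\|\varepsilon_2^k\|_{\ell^2}$, which in turn is at most
\begin{equation*}
  C\dth\|P^2 u^0\|_{H^1} \sum_{k=0}^{n-1}\int_{k\dth}^{(k+1)\dth}{\rm e}^{-\frac{\pi^2}{L^2}s}\,\dd s = C\dth\|P^2 u^0\|_{H^1}\int_0^{n\dth}{\rm e}^{-\frac{\pi^2}{L^2}s}\,\dd s \leq C\dth\|P^2 u^0\|_{H^1}\frac{L^2}{\pi^2},
\end{equation*}
since the intervals $[k\dth,(k+1)\dth]$ tile $[0,n\dth]$. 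Absorbing $L^2/\pi^2$ into the constant yields \eqref{eq:estimeps2}.

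The only genuinely delicate point is the sampling inequality $\|\Pidx w\|_{\ell^2} \leq C\|w\|_{H^1}$ for $w \in H^1(0,L)$ — one must be careful because $\Pidx$ is not bounded on $L^2$ but is on $H^1$; the argument is that on each cell $[x_j, x_{j+1}]$ one writes $w(x_j)^2 \leq \frac{2}{\dxh}\int_{x_j}^{x_{j+1}}w^2 + 2\dxh\int_{x_j}^{x_{j+1}}(w')^2$ (or a similar trace-type bound), and then multiplies by $\dxh/L$ and sums; the factor $\dxh/L = 1/((J-1)L)$ versus the $1/J$ in the discrete inner product differ only by a bounded factor. Everything else is mechanical: the key conceptual inputs are the stability (contraction) of $\Id+\dth\Pd$ from Proposition \ref{prop:stabCFLevident} and the uniform-in-time exponential decay of $\|P^2 u(s)\|_{H^1}$ from Property \ref{prop:debutalternatif}, the latter being precisely what makes the estimate uniform in $n$ rather than growing like $n\dth$.
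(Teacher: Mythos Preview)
Your proof is correct and follows essentially the same route as the paper: stability \eqref{eq:stabheat} to reduce to $\sum_k\|\varepsilon_2^k\|_{\ell^2}$, the sampling inequality $\|\Pidx w\|_{\ell^2}\leq C\|w\|_{H^1}$ (which the paper states as Lemma~\ref{lem:quadrature}), and the exponential decay from Property~\ref{prop:debutalternatif} to sum. The only cosmetic difference is that the paper sums the discrete geometric series $\sum_k {\rm e}^{-\frac{\pi^2}{L^2}k\dth}$ whereas you tile and integrate directly, which is arguably cleaner.
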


\begin{proof}
  Since $u^0\in H^6(0,L)$, we have for all $t\geq 0$, $u(t)\in H^6(0,L)$.
  In particular, for all $t\geq 0$, $P^2u(t)\in H^2(0,L)$.
  Using Lemma \ref{lem:quadrature}, we infer that for all $t\geq 0$,
  \begin{equation*}
    \|\Pidx P^2 u(t)\|_{\ell^2} \leq C \|P^2 u(t)\|_{H^1}.
  \end{equation*}
  Using the exponential decay properties \eqref{eq:estimerrorcontinuousheat}
  and \eqref{eq:estimerrorcontinuousheatdx} of Property \ref{prop:debutalternatif}, we have
  \begin{equation*}
    \forall t\geq 0,\qquad \|P^2 u(t)\|_{H^1} \leq {\rm e}^{-\frac{\pi^2}{L^2}t}
    \|P^2 u^0\|_{H^1}.
  \end{equation*}
  This implies that
\begin{equation*}
  \forall t\geq 0,\qquad
  \|\Pidx P^2 u(t)\|_{\ell^2}\leq C {\rm e}^{-\frac{\pi^2}{L^2}t} \|P^2 u^0\|_{H^1}.
\end{equation*}
The condition \eqref{eq:CFLheat} implies that \eqref{eq:stabheat} holds.
This allows to obtain
\begin{eqnarray*}
  \left\| \sum_{k=0}^{n-1} \left(\Id + \dth \Pd\right)^{n-1-k}\varepsilon_2^k\right\|_{\ell^2} & \leq &
                                                                                                      \sum_{k=0}^{n-1} \left\|\left(\Id + \dth \Pd\right)^{n-1-k}\varepsilon_2^k\right\|_{\ell^2}\\
                                                                                             & \leq & \sum_{k=0}^{n-1} \left\|\varepsilon^k_2\right\|_{\ell^2}\\
                                                                                             & \leq & \dth^2 \sum_{k=0}^{n-1} \left\|\int_0^1 (1-\sigma) \Pidx P^2 u(k\dth+\sigma\dth)\dd \sigma\right\|_{\ell^2}\\
                                                                                                    & \leq & \dth^2 \sum_{k=0}^{n-1} \int_{0}^1 (1-\sigma)\left\|\Pidx P^2 u(k\dth+\sigma\dth)\right\|_{\ell^2}\dd \sigma\\
  & \leq & C \dth^2 \sum_{k=0}^{n-1} \int_{0}^1 (1-\sigma)\left\|P^2 u(k\dth+\sigma\dth)\right\|_{H^1}\dd \sigma\\
                                                                                             & \leq & C\dth^2 \sum_{k=0}^{n-1} \int_0^1(1-\sigma) {\rm e}^{-\frac{\pi^2}{L^2}(k+\sigma)\dth} \dd\sigma\times\|P^2u^0\|_{H^1}\\
                                                                                             & \leq & C\dth^2 \sum_{k=0}^{n-1} {\rm e}^{-\frac{\pi^2}{L^2}k\dth} \underbrace{\int_0^1(1-\sigma){\rm e}^{-\frac{\pi^2}{L^2}\sigma\dth}\dd\sigma}_{\leq\frac12} \times \|P^2u^0\|_{H^1}\\
                                                                                             & \leq & C\frac{\dth^2}{2} \sum_{k=0}^{n-1} {\rm e}^{-\frac{\pi^2}{L^2}k\dth}\times \|P^2u^0\|_{H^1}\\
                                                                                             & \leq & C\frac{\dth^2}{2} \frac{1}{1-{\rm e}^{-\frac{\pi^2}{L^2}\dth}}\times \|P^2u^0\|_{H^1}\\
  & \leq & C\frac{L^2\dth}{2\pi^2} \times \frac{\frac{\pi^2}{L^2} \dth}{1-{\rm e}^{-\frac{\pi^2}{L^2}\dth}} \times \|P^2u^0\|_{H^1}.
\end{eqnarray*}
This proves \eqref{eq:estimeps2} since the function $s\mapsto s/(1-{\rm e}^{-s})$ is bounded
over $(0,\pi^2/L^2)$.
\end{proof}

\subsection{Synthesis of the analysis and proof of the main result}
\label{subsec:synthesis}

The convergence error $\eh^n$ of the numerical scheme \eqref{eq:discrheat} applied
to the linear heat equation \eqref{eq:heat} is split into 3 terms (see Equation
\eqref{eq:errorheat}).
For the first term in \eqref{eq:errorheat}, for fixed $L>0$, we have
for all $J\geq 2$, $\dth>0$ such that the CFL condition \eqref{eq:CFLheat} holds,
\begin{equation}
  \label{eq:estimerror1heat}
  \forall n\in\N,\qquad \|\left(\Id+\dth\Pd\right)^n\eh^0\|_{\ell^2} \leq \|\eh^0\|_{\ell^2} = \|\Pidx u^0 - \vh^0\|_{\ell^2}.
\end{equation}
For the second term in \eqref{eq:errorheat}, we use Proposition \ref{prop:estimfinaleL}
(hence Propositions \ref{prop:estimfinaleL1} and \ref{prop:estimfinaleL2}) to obtain
\eqref{eq:estimfinaleL}.
For the third term in \eqref{eq:errorheat}, we use Proposition \ref{prop:estimeps2}
to obtain \eqref{eq:estimeps2}.
This proves Theorem \ref{th:mainresult}.

\section{Application to the computation of a steady state}

\label{sec:computationsteadystate}

%{\color{purple} Pauline : \'ecrire le squelette de cette section.}

\subsection{The continuous nonhomogeneous setting}
\label{sec:continuousinhomogeneous}

As a by-product of the analysis carried out in Section \ref{sec:erroranalysis}
that led to the proof of Theorem \ref{th:mainresult},
one obtains an explicit, albeit complicated, formula that
solves the naively discretized stationary pure Neumann problem (see \eqref{eq:heat_stat_nh} below)
with order $\mathcal{O}(\dxh)$.
This is consistent with the classical compensation of the inconsistency
of a Dirichlet-Neumann problem by the (reinforced) stability property
(see \cite{sainsaulieu1996calcul}).

For $f\in L^2(0,L)$, $\beta,\gamma\in\R$, the nonhomogeneous stationary Neumann problem reads
\begin{equation}
  \label{eq:heat_stat_nh}
  \left\{
    \begin{array}{rl}
     -\partial_x^2 {\tilde u}^\infty&=f\\
      \partial_x {\tilde u}^\infty(0) & = \beta\\
      \partial_x {\tilde u}^\infty(L) & =\gamma.
    \end{array}
  \right.
\end{equation}
If there exists a solution $\tilde u^\infty$ to \eqref{eq:heat_stat_nh}, it is a steady state of  
\begin{equation}
  \label{eq:heat_nh}
  \left\{
    \begin{array}{rl}
      \partial_t \tilde u - \partial_x^2 \tilde u   & = f\\
      \tilde u(0,x)                 & = \tilde u^0(x)\\
      \partial_x {\tilde u} (t,0) & = \beta\\
      \partial_x {\tilde u} (t,L) &  = \gamma,
    \end{array}
  \right.
\end{equation}
which is a nonhomogeneous version of \eqref{eq:heat}.

Note that \eqref{eq:heat_stat_nh} is ill-posed.
First, solutions to \eqref{eq:heat_stat_nh} may fail to exist in $H^2(0,L)$.
Indeed, a necessary and sufficient compatibility condition for
\eqref{eq:heat_stat_nh} to have a solution is
\begin{equation}
  \label{eq:bilan}
  \gamma-\beta + \displaystyle \int_0^L f=0.
\end{equation}
Second, solutions to \eqref{eq:heat_stat_nh} are never unique:
There is an additive degree of freedom since adding any constant function
to a solution of \eqref{eq:heat_stat_nh} yields another solution.

Here is a physical interpretation of \eqref{eq:bilan} in thermodynamics.
Assume that $\tilde u(t,x)$ is the temperature in a metal rod of size $L$ at
time $t$ and position $x$, and there is a steady heat source $f(x)$ and $\beta$ and
$\gamma$ are the temperature gradients at $x=0$ and $x=L$.  The condition \eqref{eq:bilan} means that
in order for a steady state 
of \eqref{eq:heat_nh} to exist, the
total energy due to the source $f$ must be balanced by the energy fluxes at the
boundary. For example, if $\beta>0$ and $\gamma=0$, \eqref{eq:bilan} means that the
temperature gradient is positive at $x=0$, so the heat exits the bar leftwards,
thus the integral of the source $f$ over the domain should be positive (and be
equal to $\beta$), so the production of energy inside the bar compensates the
energy that leaves the bar at the left end.

In contrast to \eqref{eq:heat_stat_nh}, the 
problem \eqref{eq:heat_nh} is well-posed regardless of \eqref{eq:bilan}.
Nevertheless, assuming \eqref{eq:bilan}, the mean value of the
solution $\tilde u$ of \eqref{eq:heat_nh} satisfies $\partial_t\langle
\tilde u\rangle=0$. Moreover, still assuming \eqref{eq:bilan}, 
the solution $\tilde u$ of \eqref{eq:heat_nh} converges exponentially fast in time
to a solution ${\tilde u}^\infty$ to \eqref{eq:heat_stat_nh}.  Therefore, 
 assuming \eqref{eq:bilan}, one can see 
\eqref{eq:heat_stat_nh} as the limit in time of Problem \eqref{eq:heat_nh} and
compute from $\tilde u^0$ the {\it unique}\footnote{Using a classical result
involving the Poincar\'e-Wirtinger inequality.} solution ${\tilde u}^\infty \in H^2(0,L)$ of
\eqref{eq:heat_stat_nh} with $\langle {\tilde u}^\infty\rangle=\langle \tilde u^0\rangle$.

\subsection{The discretized nonhomogeneous setting}

The discretization of \eqref{eq:heat_stat_nh}
is also ill-posed since the matrix
$\Pd$ is not invertible, and, even more dramatically, lacks
consistency at both ends of the domain (see \eqref{eq:decoupageLdelta} of
Proposition \ref{prop:Ldelta}). However, the analysis of the convergence
error that is conducted throughout Section \ref{sec:erroranalysis} leads to
writing a numerical scheme that, the degree of freedom $\langle {\tilde \vh}^\infty\rangle_\delta$
being fixed, approximates the solution  
of \eqref{eq:heat_stat_nh}, as the limit in time of the discrete time-dependent solution
of \begin{equation}
  \label{eq:discrheat_nh}
  %\left\{  
  %  \begin{array}{rcl}
      {\tilde \vh}^{n+1} = (\Id+\dth\Pd){\tilde \vh}^n+\dth\, \bh
  %    \\
  %    \vh^0 & = & \vh^0
  %  \end{array}
  %\right.
      ,
\end{equation}
where
\begin{equation}
  \label{eq:defbh}
  \bh=\Pidx f+(1/\dxh)(-\beta,0,\hdots,0,\gamma)^\top+\rh \un,
\end{equation}
for some small (with $\dxh$) real number $\rh$  to be chosen later, and where
${\tilde \vh}^0\in\R^J$ is given.
The system \eqref{eq:discrheat_nh} is a discrete analogue to the system \eqref{eq:heat_nh}.
Similarly, a discrete analogue to \eqref{eq:heat_stat_nh} reads
\begin{equation}
  \label{eq:discrheat_stat_nh}
  %\left\{
  %   \begin{array}{rl}
       -\Pd \,{\tilde \vh}^\infty = \bh,
  %    \langle v_\infty\rangle_\delta&=\langle v_0\rangle_\delta
  %  \end{array}
  %\right.
\end{equation}
where the definition of $\bh$ incorporates both a discretization of $f$ and the boundary
conditions of \eqref{eq:heat_stat_nh}.
In this setting, an analogue to the condition \eqref{eq:bilan} is
\begin{equation}
  \label{eq:discrbilan}
  \langle \bh\rangle_\delta=0.
\end{equation}
The condition \eqref{eq:discrbilan}, together with the relation \eqref{eq:bilan}
imposes the value of $\rh$, namely
\begin{equation}
  \label{eq:defrh}
  \rh = \frac{1}{J\dxh} \left(\int_0^L f(x)\dd x - \dxh \sum_{j=0}^{J-1}f(x_j)\right).
\end{equation}
Observe that, provided $f$ is smooth enough, we have $\rh=\mathcal O(\dxh)$.
The condition \eqref{eq:discrbilan} means that $\bh$ belongs to the range of
$\Pd$, which is the vector space $\un^\perp$ orthogonal to $\un$.
Of course, this is a necessary and sufficient compatibility condition for the existence of
a solution to the non invertible linear system \eqref{eq:discrheat_stat_nh}.
Indeed, similarly to Problem \eqref{eq:heat_stat_nh},
the system \eqref{eq:discrheat_stat_nh} is ill-posed, since ${\rm Ker}\ \Pd = {\rm Vect(\un)}$.

As mentioned above, the analysis carried out in Section \ref{sec:erroranalysis} ensures that,
assuming \eqref{eq:discrbilan} and the CFL condition \eqref{eq:CFLheat},
starting from ${\tilde \vh}^0\in\R^J$,
the sequence $({\tilde \vh}^n)_{n\geq 0}$ defined by \eqref{eq:discrheat_nh}
converges exponentially fast to the unique solution ${\tilde \vh}^\infty$
to \eqref{eq:discrheat_stat_nh} satisfying
$\langle {\tilde \vh}^\infty\rangle_\delta = \langle {\tilde \vh}^0\rangle_\delta$.
Indeed, one has
\begin{equation}
  \label{eq:calcul_vn}
  \forall n\geq 0,\qquad {\tilde \vh}^n=(\Id+\dth\Pd)^n{\tilde \vh}^0+\dth\sum_{k=0}^{n-1}(\Id+\dth\Pd)^k\,\bh.
\end{equation}
Recall that $\Id+\dth\Pd$ is diagonalizable with simple eigenvalues,
and,  under the CFL condition \eqref{eq:CFLheat}, $1$ is an eigenvalue and the
other eigenvalues are of modulus strictly less that $1$.
Consequently,
$${(\Id+\delta
  t\Pd)^n{\tilde \vh}^0\underset{n\to\infty}{\rightarrow}\langle {\tilde \vh}^0\rangle_\delta\un}.$$
Observe that the power series with general term $(\Id+\dth \Pd)^k$
does {\it not} converge in $\mathcal M_J(\R)$.
However, one has, using the fact that $\bh$ has zero mean value (see \eqref{eq:discrbilan})
and the bound $\eta$ of Proposition \ref{prop:defeta} using the assumption that the CFL
condition \eqref{eq:CFLheat} holds,
\begin{equation*}
  \forall k\geq 0,\qquad
  \|(\Id+\dth\Pd)^k\,\bh \|_{\ell^2} \leq
  \eta^k\, \|\bh\|_{\ell^2}.
\end{equation*}
In particular, this implies that the sequence
$\left(\sum_{k=0}^{n-1}(\Id+\dth\Pd)^k\,\bh \right)_{n\geq 1}$
converges in $\R^J$.
In conclusion, assuming \eqref{eq:discrbilan} and \eqref{eq:CFLheat},
the only solution ${\tilde \vh}^\infty$ of \eqref{eq:discrheat_stat_nh}
with  mean value $\langle {\tilde \vh}^0\rangle_\delta$ is
\begin{equation}
  \label{eq:vinfty}
  {\tilde \vh}^\infty
  =\langle {\tilde \vh}^0\rangle_\delta\un+\dth\sum_{k=0}^{+\infty}\left[(\Id+\dth\Pd)^k\,\bh\right].
\end{equation}
Observe that, despite the expression \eqref{eq:vinfty}, the vector $\tilde \vh ^\infty$
does {\it not} depend on $\dth$.
 
\subsection{An interpretation of the previous result}

Under the hypotheses \eqref{eq:discrbilan} and \eqref{eq:CFLheat}, the convergence of the sequence
$({\tilde \vh}^n)_{n\geq 0}$ generated by \eqref{eq:discrheat_nh} towards the solution
${\tilde \vh}^\infty$
to \eqref{eq:discrheat_stat_nh} with
$\langle {\tilde \vh}^\infty\rangle_\delta=\langle {\tilde\vh}^0\rangle_\delta$
allows to interpret the explicit Euler scheme \eqref{eq:discrheat_nh} for \eqref{eq:heat_nh}
as an iterative method of a (very simple) relaxation type for the ill-posed
linear system \eqref{eq:heat_stat_nh}: we rewrite \eqref{eq:discrheat_stat_nh}
with $\Pd=\Mh-\Nh$ where $\Mh=(1/\dth)(-\Id)$ and $\Nh=-(1/\dth)(\Id+\dth \Pd)$.
The matrix $\Mh$ is clearly invertible, and one has
\begin{equation*}
  \forall n\geq 0, \qquad {\tilde \vh}^{n+1} = \Mh^{-1} \Nh {\tilde \vh}^n + \Mh^{-1}\bh.
\end{equation*}
Observe that the spectral radius of $\Mh^{-1}\Nh$ is $1$ so that the classical results for iterative
relaxation methods do not apply.
Nevertheless, thanks to \eqref{eq:discrbilan}, the sequence $({\tilde \vh}^n)_{n\geq 0}$
takes values in the affine space $\langle {\tilde \vh}^0\rangle_\delta \un + \un^\perp$,
and the spectral
radius of $\Mh^{-1}\Nh$ restricted to the stable subspace $\un^\perp$ is the $\eta$ defined
in \eqref{eq:defeta} which satisfies $0\leq \eta<1$.

\subsection{Error estimates for the computation of the steady state}

Similarly to the analysis of the error for the homogeneous case carried out in Section
\ref{sec:erroranalysis}, we can analyse the error at time step number $n$ defined by
\begin{equation*}
  {\tilde \eh}^{n} = \Pidx \tilde u(\dth) - {\tilde \vh}^n,
\end{equation*}
by writing
\begin{eqnarray*}
  \lefteqn{\tilde \eh^{n+1}}\\
  & = & \Pidx \tilde u((n+1)\dth) - \tilde {\mathsf v} ^{n+1}\\
                     & = & \Pidx \left(\tilde u(n\dth)+\dth (P\tilde u(n\dth)+f) + \int_{n\dth}^{(n+1)\dth}((n+1)\dth-s)\partial_t^2 \tilde u(s)\dd s\right) -
  \left(\Id+\dth\Pd\right){\tilde \vh}^n-\dth \bh\\
          & = & \Pidx \tilde u(n\dth) + \dth \Pidx P \tilde u(n\dth)-\left(\Id+\dth \Pd\right){\tilde \vh}^n + \dth (\Pidx f - \bh)
                + \Pidx \int_{n\dth}^{(n+1)\dth}((n+1)\dth-s)\partial_t^2 \tilde u(s)\dd s \\
          & = & \left(\Id+\dth\Pd\right){\tilde \eh}^n + \underbrace{\dth \left[\left(\Pidx P - \Pd\Pidx\right)\tilde u(n\dth)+(\Pidx f - \bh)\right]}_{:={\tilde \varepsilon}^n_1}
                + \underbrace{\int_{n\dth}^{(n+1)\dth}((n+1)\dth-s)\Pidx \partial_t^2 \tilde u(s)\dd s}_{:=\tilde{\varepsilon}^n_2}.
\end{eqnarray*}

This yields for all $n\geq 0$,
\begin{equation}
  \label{eq:recursionerrorsteadystate}
  \tilde \eh^{n}
  = \left(\Id+\dth\Pd\right)^n {\tilde \eh}^0
  + \sum_{k=0}^{n-1} \left(\Id+\dth\Pd\right)^{n-1-k} {\tilde \varepsilon}^k_1
  + \sum_{k=1}^{n-1} \left(\Id+\dth\Pd\right)^{n-1-k} {\tilde \varepsilon}^k_2.
\end{equation}

% For the estimation of the term in ${\tilde\varepsilon}^n_1$, we note that
% \begin{equation*}
%   {\tilde \varepsilon}_1^n =   \dth \left[{\mathcal L}_\delta \tilde u(t_n)
%     +(\Pidx f - \bh) \right]
%   = \dth {\mathcal L}_\delta \tilde u(t_n) - \frac{\dth}{\dxh}
%                            \left[
%                            \begin{matrix}
%                              -\beta \\
%                              0\\
%                              \vdots\\
%                              0\\
%                              \gamma
%                            \end{matrix}
%                          \right]
%                          ={\mathcal L}^1_{\dxh} \tilde u(t_n) + \dxh^2 {\mathcal L}^2_\dxh
%                          \tilde u (t_n),
% \end{equation*}
% with the notation of Definition \ref{def:Ldelta}, even if $u(t_n)$ does not satisfy
% the homogeneous boundary conditions in that definition.

For the convergence of the numerical method \eqref{eq:discrheat_nh} to a steady state
of the nonhomogeneous heat equation \eqref{eq:heat_nh}, we prove the uniform result below.

\begin{theorem}
  \label{th:steadystateuniform}
  Assume $L>0$ is fixed. There exists $C>0$ such that for all
  $f\in {\mathcal C}^3([0,L])$, $\beta,\gamma\in\R$ given such that \eqref{eq:bilan} holds,
  %{\color{blue}Check smoothness assumption on $f$.}
  all $\tilde u^0\in H^6(0,L)$ such that
  % \begin{equation*}
  %   %\label{eq:hypothesesu0tilde}
  %   \partial_x \tilde u^0 (0) = \alpha,\quad
  %   \partial_x \tilde u^0 (L) = \beta,\quad
  %   \partial_x^3 \tilde u^0 (0) = 0,\quad
  %   \partial_x^3 \tilde u^0 (L) = 0,\quad
  %   \partial_x^5 \tilde u^0(0) = 0, \quad \text{and} \quad
  %   \partial_x^5 \tilde u^0(L) = 0,
  % \end{equation*}

    \begin{equation} \label{eq:hypothesesu0tilde}
      \left\{
    \begin{matrix}
      \partial_x \tilde u^0 (0) & = &  \beta & \quad \text{and} \quad & \partial_x \tilde u^0 (L) & = & \gamma\\[2 mm]
      
      \partial_x^3 \tilde u^0 (0) & = & - \partial_x f(0) & \quad \text{and} \quad & \partial_x^3 \tilde u^0 (L) & = & - \partial_x f(L)\\[2 mm]
      \partial_x^5 \tilde u^0(0) & = & -\partial_x^3 f(0) & \quad \text{and} \quad & \partial_x^5 \tilde u^0(L) & = & -\partial_x^3 f(L)
    \end{matrix}
    \right. ,
  \end{equation}
  
  all $\dth\in(0,1)$ and $J\geq 2$ such that \eqref{eq:CFLheat} holds,
  all $n\in\N$, all $\tilde \vh^0\in\R^J$, one has
  % \begin{equation*}
  %   \label{eq:steadystateuniform}
  %   \left\|  \Pidx \tilde u(n\dth) - {\tilde \vh}^n \right\|_{\ell^2} \leq
  %   \left\|  \Pidx \tilde u^0 - {\tilde \vh}^0 \right\|_{\ell^2}
  %   + C(\dxh+\dth)\left(1+\|\tilde u^0\|_{H^5} + \sum_{k=0}^{2} \|f^{(k)}\|_{\infty}\right),
  % \end{equation*}

   \begin{equation}
    \label{eq:steadystateuniform}
    \left\|  \Pidx \tilde u(n\dth) - {\tilde \vh}^n \right\|_{\ell^2} \leq
    \left\|  \Pidx \tilde u^0 - {\tilde \vh}^0 \right\|_{\ell^2}
    + C(\dxh+\dth)\left(\|\tilde u^0\|_{H^5} + \|f\|_{H^3}\right),
  \end{equation}
  
  where $\tilde u$ is the solution to \eqref{eq:heat_nh} and $(\tilde \vh^n)_{n\geq 0}$
is the corresponding solution to \eqref{eq:discrheat_nh} with $\bh$ defined in
\eqref{eq:defbh}-\eqref{eq:defrh}.
\end{theorem}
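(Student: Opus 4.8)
The plan is to reduce the nonhomogeneous problem to the homogeneous one treated in Section~\ref{sec:erroranalysis}, at the price of one new, time-independent source. Fix a solution $\tilde u^\infty\in H^2(0,L)$ of \eqref{eq:heat_stat_nh} (it exists because \eqref{eq:bilan} holds) and set $w=\tilde u-\tilde u^\infty$. Then $w$ solves the homogeneous Neumann heat equation \eqref{eq:heat} with initial datum $w^0=\tilde u^0-\tilde u^\infty$: indeed $\partial_t w=\partial_x^2\tilde u+f=\partial_x^2 w+\partial_x^2\tilde u^\infty+f=\partial_x^2 w$, and $\partial_x w(t,0)=\beta-\beta=0=\gamma-\gamma=\partial_x w(t,L)$. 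Since $\partial_x^2\tilde u^\infty=-f$, hence $\partial_x^{3}\tilde u^\infty=-\partial_x f$ and $\partial_x^{5}\tilde u^\infty=-\partial_x^{3}f$, the three lines of \eqref{eq:hypothesesu0tilde} are equivalent to $\partial_x w^0$, $\partial_x^{3}w^0$ and $\partial_x^{5}w^0$ vanishing at both endpoints; together with $\tilde u^0\in H^6(0,L)$ and $f\in\mathcal C^3([0,L])$, this ensures that $w^0$ meets the regularity and compatibility requirements needed to run the analysis of Section~\ref{sec:erroranalysis} on $w$ (only $w^0\in H^5$ and the above boundary conditions actually enter the estimates). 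Inserting $\tilde u=w+\tilde u^\infty$ in the definitions of $\tilde\varepsilon_1^n,\tilde\varepsilon_2^n$ and using $\partial_t^2\tilde u=\partial_t^2 w=P^2 w$, one finds that $\tilde\varepsilon_2^n$ coincides with the homogeneous remainder $\varepsilon_2^n$ of $w$, while
\[
 \tilde\varepsilon_1^n=\dth\,\Ld w(n\dth)+\dth\,\mathsf g,\qquad \mathsf g:=\Ld\tilde u^\infty+\Pidx f-\bh,
\]
where $\mathsf g$ does not depend on $n$.

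With this, \eqref{eq:recursionerrorsteadystate} expresses the error as a sum of four terms, three of which are handled as in the homogeneous case. The term $(\Id+\dth\Pd)^n\tilde\eh^0$ is $\le\|\Pidx\tilde u^0-\tilde\vh^0\|_{\ell^2}$ by Proposition~\ref{prop:stabCFLevident}. The term $\sum_k(\Id+\dth\Pd)^{n-1-k}\dth\Ld w(k\dth)$ is $\le C\dxh\sum_{p\ge1}|\alpha^w_p|p^4$ by Proposition~\ref{prop:estimfinaleL} applied to $w$ (with $(\alpha^w_p)$ the cosine coefficients of $w^0$), and, arguing as in the corollary of Theorem~\ref{th:mainresult}, this is $\le C\dxh\,\|\partial_x^5 w^0\|_{L^2}=C\dxh\,\|\partial_x^5\tilde u^0+\partial_x^3 f\|_{L^2}\le C\dxh(\|\tilde u^0\|_{H^5}+\|f\|_{H^3})$. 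The term $\sum_k(\Id+\dth\Pd)^{n-1-k}\tilde\varepsilon_2^k$ is $\le C\dth\,\|P^2 w^0\|_{H^1}=C\dth\,\|\partial_x^4\tilde u^0+\partial_x^2 f\|_{H^1}\le C\dth(\|\tilde u^0\|_{H^5}+\|f\|_{H^3})$ by Proposition~\ref{prop:estimeps2} applied to $w$.

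It remains to bound the genuinely new contribution $\dth\sum_{k=0}^{n-1}(\Id+\dth\Pd)^{k}\mathsf g$. Two properties of $\mathsf g$ are needed. First, $\mathsf g$ has zero discrete mean: from $\partial_x^2\tilde u^\infty=-f$ one gets $\Pidx P\tilde u^\infty=-\Pidx f$, whence $\mathsf g=-\Pd\Pidx\tilde u^\infty-\bh$; since $\Pd$ is symmetric with $\Pd\un=0$ one has $\langle\Pd z\rangle_\delta=0$ for every $z$, and $\langle\bh\rangle_\delta=0$ by \eqref{eq:discrbilan}, so $\langle\mathsf g\rangle_\delta=0$. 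Second — and this is the heart of the matter — although $\Pidx f-\bh=\tfrac1\dxh(\beta,0,\dots,0,-\gamma)^\top-\rh\un$ has entries of size $1/\dxh$, these singular boundary entries cancel \emph{exactly} against those of $\Ld\tilde u^\infty$, which blow up like $1/\dxh$ precisely because $\partial_x\tilde u^\infty$ does not vanish at the endpoints (the very mechanism behind the inconsistency of $\Pd$ there). A Taylor expansion with integral remainder at $x_0$ (and similarly at $x_{J-1}$) gives $[\Ld\tilde u^\infty]_0=\tfrac12\partial_x^2\tilde u^\infty(x_0)-\tfrac{\beta}{\dxh}+\mathcal O(\dxh)$, so that $\mathsf g_0=-\tfrac12 f(0)-\rh+\mathcal O(\dxh)$ and $\mathsf g_{J-1}=-\tfrac12 f(L)-\rh+\mathcal O(\dxh)$ are of modulus $\le C\|f\|_{\mathcal C^1}$, while the interior entries $\mathsf g_j=[\Ld\tilde u^\infty]_j-\rh$ satisfy $|\mathsf g_j|\le C\dxh\|f\|_{\mathcal C^2}$ since $|[\Ld\tilde u^\infty]_j|\le C\dxh^2\|f\|_{\mathcal C^2}$ and $|\rh|\le C\dxh\|f\|_{\mathcal C^1}$ by \eqref{eq:defrh}. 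Writing $\mathsf g^{\mathrm{int}}$ for $\mathsf g$ with its first and last entries deleted, we get $\|\mathsf g^{\mathrm{int}}\|_{\ell^2}\le C\dxh\|f\|_{\mathcal C^2}$ and, for every $\ell$, $|\langle\mathsf g,\Wh_\ell\rangle_\delta|\le\tfrac{\sqrt2}{J}(|\mathsf g_0|+|\mathsf g_{J-1}|)+\|\mathsf g^{\mathrm{int}}\|_{\ell^2}\le C\dxh\|f\|_{H^3}$, using $1/J\le C\dxh$ and $H^3(0,L)\hookrightarrow\mathcal C^2([0,L])$. Expanding $\mathsf g=\sum_{\ell=1}^{J-1}\langle\mathsf g,\Wh_\ell\rangle_\delta\Wh_\ell$ on the orthonormal eigenbasis of Lemma~\ref{lem:specPdelta} (no $\Wh_0$ component, as $\langle\mathsf g\rangle_\delta=0$),
\begin{align*}
 \Big\|\dth\sum_{k=0}^{n-1}(\Id+\dth\Pd)^{k}\mathsf g\Big\|_{\ell^2}^2
 &=\sum_{\ell=1}^{J-1}\langle\mathsf g,\Wh_\ell\rangle_\delta^2\;\Big|\dth\sum_{k=0}^{n-1}(1+\dth\lambda_\ell)^k\Big|^2\\
 &\le\Big(\max_{1\le\ell\le J-1}|\langle\mathsf g,\Wh_\ell\rangle_\delta|\Big)^2\sum_{\ell=1}^{J-1}\Big|\dth\sum_{k=0}^{n-1}(1+\dth\lambda_\ell)^k\Big|^2\le C\dxh^2\|f\|_{H^3}^2,
\end{align*}
by Proposition~\ref{prop:sumsumIplusPdelta}; hence this contribution is $\le C\dxh\|f\|_{H^3}$.

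Adding the four bounds yields \eqref{eq:steadystateuniform}. The main obstacle is the analysis of $\mathsf g$: one must recognize that the right-hand side $\bh$ of \eqref{eq:discrheat_nh} was built (through the $\tfrac1\dxh(-\beta,0,\dots,0,\gamma)^\top$ term and the correction $\rh\un$ of \eqref{eq:defbh}) exactly so that the $\mathcal O(1/\dxh)$ boundary singularities produced by the inconsistent operator $\Ld$ acting on $\tilde u^\infty$ are removed, leaving a bounded, zero-mean vector that is "boundary-supported up to $\mathcal O(\dxh)$" and whose accumulation under the powers of $\Id+\dth\Pd$ is then controlled just like the term in $\mathcal L^1_\dxh$ of Section~\ref{subsec:analysisL}, via Proposition~\ref{prop:sumsumIplusPdelta}. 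The only other point demanding care is the (elementary, via $\partial_x^2\tilde u^\infty=-f$) verification that the compatibility conditions \eqref{eq:hypothesesu0tilde} on $\tilde u^0$ are precisely what makes $w^0$ admissible for the homogeneous estimates of Section~\ref{sec:erroranalysis}.
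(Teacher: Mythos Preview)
Your proof is correct and follows the same architecture as the paper's: subtract the steady state $\tilde u^\infty$ to reduce to the homogeneous problem for $w=\tilde u-\tilde u^\infty$, handle the three standard terms via Propositions~\ref{prop:stabCFLevident}, \ref{prop:estimfinaleL}, \ref{prop:estimeps2}, and then analyze the new time-independent source $\mathsf g=\Ld\tilde u^\infty+\Pidx f-\bh$ by observing that it has zero discrete mean and that the $1/\dxh$ boundary singularities of $\Ld\tilde u^\infty$ cancel against those of $\bh$.

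The one place where you and the paper diverge is in packaging the estimate for $\dth\sum_k(\Id+\dth\Pd)^k\mathsf g$. The paper splits $\mathsf g$ as $\Lambda_1+\Lambda_2$ with $\Lambda_1=\mathcal L^1_{\dxh}\tilde u^\infty$ (boundary-supported, bounded entries) and $\Lambda_2=\dxh^2\mathcal L^2_{\dxh}\tilde u^\infty-\rh\un$ (small $\ell^2$-norm), then applies Proposition~\ref{prop:sumsumIplusPdelta} to $\Lambda_1-\langle\Lambda_1\rangle\un$ and Proposition~\ref{prop:estimsommepuissanceeta} to $\Lambda_2-\langle\Lambda_2\rangle\un$. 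You instead keep $\mathsf g$ whole and note that \emph{every} eigen-coefficient $\langle\mathsf g,\Wh_\ell\rangle_\delta$ is $\mathcal O(\dxh)$ (the two boundary entries of size $\mathcal O(1)$ pick up a factor $1/J\sim\dxh$, the interior entries are already $\mathcal O(\dxh)$ in $\ell^2$), so a single invocation of Proposition~\ref{prop:sumsumIplusPdelta} suffices. This is a mild streamlining: it avoids Proposition~\ref{prop:estimsommepuissanceeta} here and the $\Lambda_1/\Lambda_2$ bookkeeping. Your verification that $\langle\mathsf g\rangle_\delta=0$ via $\mathsf g=-\Pd\Pidx\tilde u^\infty-\bh$ is also cleaner than the paper's direct computation.
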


% {\color{blue} 20230314 :
%   \begin{itemize}
%     \item V\'erifier qu'il y a lieu de mettre un $1$ dans le right-hand side de \eqref{eq:steadystateuniform}. 20230531 : je pense que non. Je l'ai enlev\'e.
%     \item \sout{Sortir l'estimation de $\dth\sum_{k=0}^{n-1}\eta^k$ dans un lemme ?}
%     \item \sout{On peut expliquer que, puisque l'on a une erreur {\it uniforme}, on peut passer
%       \`a la limite quand $n$ est grand dans le membre de gauche !}
%   \end{itemize}
% }

% \begin{theorem}
%   \label{th:steadystateuniformintime}
%   Assume $L>0$ be fixed.
%   Let $f\in {\mathcal C}^2([0,L])$, $\gamma,\beta\in\R$ be given such that \eqref{eq:bilan} holds.
%   %{\color{blue}Check smoothness assumption on $f$.}
%   There exists $C>0$ such that for all $\tilde u^0\in H^5(0,L)$ such that
%   \eqref{eq:compatibiliteu0tilde} holds, for all
%   $\dth\in(0,1)$ and $J\geq 2$ such that \eqref{eq:CFLheat} holds, all $n\in\N$,
%   all $\tilde \vh^0\in\R^J$, one has
%   \begin{equation}
%     \label{eq:steadystateuniformintime}
%     \left\|  \Pidx \tilde u(n\dth) - {\tilde \vh}^n \right\|_{\ell^2} \leq
%     \left\|  \Pidx \tilde u^0 - {\tilde \vh}^0 \right\|_{\ell^2}
%     + C(\sqrt{\dxh}+\dth)(1+\|\tilde u^0\|_{H^5}),
%   \end{equation}
%   where $\tilde u$ is the solution to \eqref{eq:heat_nh} and $(\tilde \vh^n)_{n\geq 0}$
% is the corresponding solution to \eqref{eq:discrheat_nh} with $\bh$ defined in
% \eqref{eq:defbh}-\eqref{eq:defrh}.
% \end{theorem}

\begin{remark}
  Theorem  \ref{th:steadystateuniform} shows that,
under the CFL condition \eqref{eq:CFLheat},
the numerical method \eqref{eq:discrheat_nh} is of order $1$ in time and space for the computation
of the stationary state of the nonhomogeneous heat equation \eqref{eq:heat_nh}
{\it uniformly in time}.
% Theorem  \ref{th:steadystateuniformintime} shows that,
% under the CFL condition \eqref{eq:CFLheat},
% the numerical method \eqref{eq:discrheat_nh} is of order $1$ in time and $1/2$
% in space for the computation
% of the stationary state of the nonhomogeneous heat equation \eqref{eq:heat_nh} for all times.
Observe that starting numerically from $\tilde \vh^0=\Pidx \tilde u^0$ makes
the first term in the right-hand side of \eqref{eq:steadystateuniform} vanish.
\end{remark}

% \begin{remark}
%   The condition \eqref{eq:compatibiliteu0tilde} is a compatibility
%   relation bewteen the initial datum $\tilde u^0$ and the right-hand side $f$
%   of \eqref{eq:heat_stat_nh}.
%   It is there to make sure some function that appears in the proof
%   has some exponential decay in time.
% \end{remark}

\begin{remark}
  \label{rem:utildemoinsutildeinfini}
  The conditions \eqref{eq:hypothesesu0tilde} on the initial datum $\tilde u^0$ ensure that
  the function difference $u=\tilde u - \tilde u^\infty$ between the exact solution $\tilde u$
  of the evolution nonhomogeneous heat equation \eqref{eq:heat_nh}
  and the exact solution $\tilde u^\infty$
  of the steady state equation \eqref{eq:heat_stat_nh} with
  $\langle \tilde u^\infty \rangle = \langle \tilde u^0\rangle$ has a zero mean initial value
  $u(0)=\tilde u^0-\tilde u^\infty$ satisfying \eqref{eq:hypotheseu0}.
  In particular, the results of Section \ref{subsec:decayheathomog} apply to $u$.

  Note that this hypothesis \eqref{eq:hypothesesu0tilde} allows to obtain an explicit and uniform
  in time bound,
  as described in the right-hand side of \eqref{eq:steadystateuniform}.
  However, the relations \eqref{eq:hypothesesu0tilde} are probably not mandatory
  to ensure the uniform in time order of the method
  (see numerical experiments in Section \ref{sec:num}).
  This remark is similar to Remark \ref{rem:domainu0} in the homogeneous setting.
\end{remark}

% {\color{red} 20230511 : On a enfin r\'esolu cette histoire de domaines.
%   Dire que $\tilde u$ est une solution de \eqref{eq:heat_nh} revient \`a dire que sa diff\'erence
%   avec $\tilde u^\infty$ est solution de la chaleur homog\`ene ($f\equiv 0$) avec Neumann homog\`ene.
%   On a besoin de conditions techniques liant $\tilde u(0)$ avec $f$ pour faire la preuve
%   et avoir un r\'esultat quantifatif (second membre explicite) en temps long, mais ce n'est peut-\^etre
%   que purement technique (en tout cas, en prenant la solution \`a un temps $T>0$ pour initialiser,
%   on n'a plus ce probl\`eme et c'est ce qui nous int\'eresse ici).
%   VOIR LA TABLETTE (et Hille-Yosida dans Br\'ezis) pour les d\'etails.
% }

\begin{remark}
  \label{rem:consequniforme}
  Observe that the estimate in the right-hand side of \eqref{eq:steadystateuniform}
  is uniform in time. In particular, it does not depend on $n$. Moreover, we have
  $\tilde u(t) \underset{t\to +\infty}{\longrightarrow} \tilde u^\infty$ in $H^1(0,L)$
  (Remark \ref{rem:utildemoinsutildeinfini} implies that the difference between these two functions
  tends to $0$ exponentially fast). Using Lemma \ref{lem:quadrature}, we infer that
  $\Pidx\tilde u(t) \underset{t\to +\infty}{\longrightarrow} \Pidx\tilde u^\infty$ in $\R^J$.
  Since $\tilde\vh^n \underset{n\to +\infty}{\longrightarrow}\tilde\vh^\infty$ in $\R^J$
  (see \eqref{eq:vinfty}), we can pass to the limit in \eqref{eq:steadystateuniform} to obtain
  % \begin{equation*}
  %   \left\|  \Pidx \tilde u^\infty - {\tilde \vh}^\infty \right\|_{\ell^2} \leq
  %   \left\|  \Pidx \tilde u^0 - {\tilde \vh}^0 \right\|_{\ell^2}
  %   + C(\dxh+\dth)\left(1+\|\tilde u^0\|_{H^5} + \sum_{k=0}^{2} \|f^{(k)}\|_{\infty}\right).
  % \end{equation*}
  
    \begin{equation*}
    \left\|  \Pidx \tilde u^\infty - {\tilde \vh}^\infty \right\|_{\ell^2} \leq
    \left\|  \Pidx \tilde u^0 - {\tilde \vh}^0 \right\|_{\ell^2}
    + C(\dxh+\dth) \left( \|\tilde u^0\|_{H^5} + \|f\|_{H^3}\right).
  \end{equation*}

  In particular, starting from $\tilde \vh^0$ such that $\|\Pidx \tilde u^0-\tilde \vh^0\|_{\ell^2}
  = \mathcal O(\dxh)$, we infer that
  $\left\|  \Pidx \tilde u^\infty - {\tilde \vh}^\infty \right\|_{\ell^2} = \mathcal O(\dxh)$,
  so that the scheme \eqref{eq:discrheat_nh} computes an approximation of the
  steady state $\tilde u^\infty$ in $\mathcal O(\dxh)$.
  
\end{remark}

\begin{proof}%[of Theorem \ref{th:steadystateuniform}]
% Before starting the proof of Theorems \ref{th:steadystatefixedT} and \ref{th:steadystateuniformintime},
% we make a few comments that include facts that will be useful for both proofs.
Recall that ${\tilde u}^\infty$ was defined at the end of Section \ref{sec:continuousinhomogeneous}
as {\it the} solution to \eqref{eq:heat_stat_nh}
with $\langle{\tilde u}^\infty\rangle=\langle\tilde u^0\rangle$.
Proving Theorem \ref{th:steadystateuniform} is done by estimating separately the three terms
in the right-hand side of \eqref{eq:recursionerrorsteadystate}.
Observe that
\begin{itemize}
\item Thanks to the CFL condition \eqref{eq:CFLheat} and the stability property \eqref{eq:stabheat},
  the first term in the right-hand side of \eqref{eq:recursionerrorsteadystate}
  is bounded easily and yields the first term in the right-hand side of \eqref{eq:steadystateuniform}.
\item The third (and last) term in the right-hand side of \eqref{eq:recursionerrorsteadystate}
is easily bounded as in Proposition \ref{prop:estimeps2},
by observing that the function $u(t):=\tilde u(t)-\tilde u^\infty$
is the solution to a linear homogeneous heat equation with homogeneous Neumann boundary conditions,
with initial value $u(0)=\tilde u^0-\tilde u^\infty$ with $\langle u(0)\rangle=0$.
Moreover, the functions $v=\partial_t u$ and $w=\partial_t v$, also solve the linear homogeneous
heat equation over $(0,L)$ with homogeneous Neumann boundary conditions.
% In particular, the function $t\mapsto \langle w(t) \rangle$ is constant over $(0,+\infty)$,
% and this constant is the derivative of the function $t\mapsto \langle v(t) \rangle$ over $(0,+\infty)$.
% Since $t\mapsto \langle v(t) \rangle$ is constant over $(0,+\infty)$,
% we infer that its derivative $t\mapsto \langle w(t) \rangle$ vanishes over $(0,+\infty)$.
Using \eqref{eq:hypothesesu0tilde}, we infer that $u(0)=\tilde u^0-\tilde u^\infty$ satisfies
\eqref{eq:hypotheseu0}. In particular, one may use Proposition \ref{prop:debutalternatif}
to obtain that $w=\partial_t^2 u$ tends to $0$ in $H^1$-norm exponentially fast, and so
does $\partial_t^2\tilde u = \partial_t^2 u$.
A similar analysis as that of Proposition \ref{prop:estimeps2} therefore yields
a term in $\dth \times \|\tilde u^0\|_{H^5}$ in the right-hand side of
\eqref{eq:steadystateuniform}.
\end{itemize}

Therefore, proving Theorem \ref{th:steadystateuniform}
amounts to finding a bound on the second term in the right-hand side of
\eqref{eq:recursionerrorsteadystate}.
Recall that the function $u$ defined above is a solution of the homogeneous problem \eqref{eq:heat},
with mean value $0$, so that $\|u(t)\|_{H^1} \underset{t\to +\infty}{\longrightarrow} 0$,
exponentially fast. Moreover, replacing $\tilde u(t)$ with $u(t) + \tilde u ^\infty$, we have
\begin{equation}
  \label{eq:decoupageepsilon1nonhomog}
  {\tilde\varepsilon}^n_1=
  \dth \left(\Pidx P - \Pd\Pidx\right) u(n\dth)
  + \dth \left[\left(\Pidx P - \Pd\Pidx\right)\tilde u^\infty+(\Pidx f - \bh)\right].
\end{equation}
The analysis of the error for the first term in the expression above is the very same
as in the homogeneous case carried out in Section \ref{subsec:analysisL}, since $u$ is a solution
of the homogeneous problem \eqref{eq:heat} with zero mean value.
This yields a term in $\dxh$ in the right-hand side of \eqref{eq:steadystateuniform},
as it did in Proposition \ref{prop:estimfinaleL}. Indeed, from Proposition \ref{prop:estimfinaleL},
denoting by $(\alpha_p)_{p\geq 1}$ the coefficients of $u(0)=\tilde u^0-\tilde u^\infty$ in the
cosine basis \eqref{eq:baseONcos}, we infer that
\begin{eqnarray*}
  \left\|\dth \sum_{k=0}^{n-1} (I+\dth\Pd)^{n-1-k} \Ld u(k\dth)\right\|
  &
    \leq
  &
    C \dxh \sum_{p=1}^{+\infty} |\alpha_p| p^5 p^{-1}\\
  &
    \leq
  &
    C \dxh \left(\sum_{p=1}^{+\infty} \frac{1}{p^2}\right)^{\frac12}
    \left(\sum_{p=1}^{+\infty} \left(p^5|\alpha_p|\right)^2\right)^{\frac12}\\
  &
    \leq
  &
    C \dxh \|\partial_x^5 u(0)\|_{L^2(0,L)}\\
  &
    \leq
  &
    C \dxh \left(\|\partial_x^5 \tilde u^0\|_{L^2(0,L)} + \|\partial_x^5 \tilde u^\infty\|_{L^2(0,L)}\right)\\
  &
    \leq
  &
    C \dxh \left(\| \tilde u^0\|_{H^5(0,L)} + \|\partial_x^3 f\|_{L^2(0,L)}\right),
\end{eqnarray*}
where we have used \eqref{eq:hypothesesu0tilde} (which ensures that $u(0)$ satisfies
\eqref{eq:hypotheseu0}) to compute the coefficients of $\partial_x^5u(0)$ in the
cosine basis of $L^2(0,L)$.

So, proving Theorem \ref{th:steadystateuniform}
amounts to proving an estimate for the part of the second term in \eqref{eq:recursionerrorsteadystate}
that corresponds to the last term in the right-hand side of \eqref{eq:decoupageepsilon1nonhomog}.
In the perspective of analysing this term,
we may observe that, using the definition of $\bh$ in \eqref{eq:defbh},
\begin{equation}
  \label{eq:secondtermepsilon1inhomog}
  \Ld {\tilde u}^\infty + \Pidx f - \bh
  = \Ld {\tilde u}^\infty -  (1/\dxh)(-\beta,0,\hdots,0,\gamma)^\top-\rh \un.
\end{equation}
Using Definition \ref{def:Ldelta} (of $\Ld$) and the fact that ${\tilde u^\infty}$
solves \eqref{eq:heat_stat_nh}, we have
\begin{equation*}
  \langle  \Ld {\tilde u}^\infty  \rangle_\delta = 
  \frac{1}{J}\sum_{j=0}^{J-1} \left(P {\tilde u}^\infty\right)(x_j)
  =
  - \frac{1}{J}\sum_{j=0}^{J-1} f(x_j)
  .
\end{equation*}
This implies that
\begin{equation*}
  \langle \Ld {\tilde u}^\infty + \Pidx f - \bh\rangle_\delta
  = - \frac{1}{J}\sum_{j=0}^{J-1} f(x_j)
  +\frac{1}{J\dxh} (\beta-\gamma) -\rh.
\end{equation*}
Using the definition of $\rh$ in \eqref{eq:defrh}, we infer
\begin{eqnarray*}
  \langle \Ld {\tilde u}^\infty + \Pidx f - \bh\rangle_\delta
  & = & - \frac{1}{J}\sum_{j=0}^{J-1} f(x_j)
  +\frac{1}{J\dxh} (\beta-\gamma) - \frac{1}{J\dxh} \int_0^L f(x)\dd x + \frac{1}{J}
  \sum_{j=0}^{J-1} f(x_j)\\
  & = & \frac{1}{J\dxh} \left(\beta-\gamma -\int_0^L f(x)\dd x\right).
\end{eqnarray*}
Using the continuous condition \eqref{eq:bilan}, this proves that the second term
in the error decomposition \eqref{eq:decoupageepsilon1nonhomog} has zero mean value.
  We observe that, using \eqref{eq:defbh} and then the notation
  of Definition \ref{def:Ldelta} and Proposition \ref{prop:Ldelta},
  \begin{eqnarray*}
    \Ld {\tilde u}^\infty + \Pidx f - \bh
    & = & 
          \Ld {\tilde u}^\infty + \Pidx f  - \Pidx f - (1/\dxh)(-\beta,0,\hdots,0,\gamma)^\top - \rh \un\\
    & = & \underbrace{{\mathcal L}^1_{\dxh} \tilde u^\infty}_{:=\Lambda_1} + \underbrace{\dxh^2 {\mathcal L}^2_\dxh \tilde u^\infty-\rh \un}_{:=\Lambda_2},
  \end{eqnarray*}
  since the boundary values of $\tilde u ^ \infty$ simplify.
  Since this vector $\Lambda_1+\Lambda_2$ has zero mean value, we may write
  \begin{equation}
    \label{eq:defXY}
    \Ld {\tilde u}^\infty + \Pidx f - \bh
    =
    \Lambda_1 - \langle \Lambda_1 \rangle \un
    +
    \Lambda_2 - \langle \Lambda_2 \rangle\un.
  \end{equation}
  Denoting by $\Lambda_{1,0}$ and $\Lambda_{1,J-1}$ the first and last coefficients of the vector
  $\Lambda_1$ (the others vanish, according to Definition \ref{def:Ldelta}), we compute using
  Lemma \ref{lem:specPdelta} (see also \eqref{eq:decompe0} and \eqref{eq:decompeJm1})
  \begin{equation*}
    \Lambda_1-\langle \Lambda_1 \rangle\un
    = \frac{\sqrt{2}}{J} \sum_{\ell=1}^{J-1} (\Lambda_{1,0}+(-1)^\ell \Lambda_{1,J-1})
    \cos\left(\frac{\ell\pi}{2J}\right) \Wh_\ell.
  \end{equation*}
  Using the orthonormality of $(\Wh_\ell)_{1\leq \ell\leq J-1}$,
  this implies that, for $J\geq 2$, $\dth>0$ such that \eqref{eq:CFLheat} holds and $n\geq 1$,
  we have
  \begin{eqnarray*}
    \left\| \dth \sum_{k=0}^{n-1} (\Id+\dth\Pd)^{n-1-k} (\Lambda_1-\langle \Lambda_1 \rangle \un) \right\|_{\ell^2}^2
    & = & \frac{2}{J^2}
          \sum_{\ell=1}^{J-1} \left|\dth \sum_{k=0}^{n-1} (\Lambda_{1,0}+(-1)^\ell \Lambda_{1,J-1})(1+\dth\lambda_\ell)^k
          \cos\left(\frac{\ell\pi}{2J}\right)\right|^2 \\
    & \leq & \frac{2}{J^2} \sum_{\ell=1}^{J-1}
             \left|\dth \sum_{k=0}^{n-1}(|\Lambda_{1,0}|+|\Lambda_{1,J-1}|) (1+\dth\lambda_\ell)^k\right|^2\\
    & \leq & \frac{1}{J^2} (\Lambda_{1,0}^2+\Lambda_{1,J-1}^2)\sum_{\ell=1}^{J-1}
             \left|\dth \sum_{k=0}^{n-1} (1+\dth\lambda_\ell)^k\right|^2.
  \end{eqnarray*}
  Taking square roots and using Proposition \ref{prop:sumsumIplusPdelta},
  we infer that there exists a constant $C>0$ such that for all such $J\geq 2$, $\dth>0$ such that
  the CFL condition \eqref{eq:CFLheat} holds and all $n\geq 1$,
  \begin{equation}
    \label{eq:Xmoinssamoyenne}
    \left\| \dth \sum_{k=0}^{n-1} (\Id+\dth\Pd)^{n-1-k} (\Lambda_1-\langle \Lambda_1 \rangle \un) \right\|_{\ell^2}
    \leq
    C \dxh \max(|\Lambda_{1,0}|,|\Lambda_{1,J-1}|).
  \end{equation}
  Observing (see \eqref{eq:defL1}) that $2|\Lambda_{1,0}|$ and $2|\Lambda_{1,J-1}|$ are bounded by
  $\|(\tilde u^\infty)''\|_{\infty} + \dxh \|(\tilde u^\infty)^{(3)}\|_{\infty}$ yields
  a bound of the form $\dxh$ times $(\|f\|_{\infty} + \|f'\|_\infty)$,
  which in turn is absorbed by $\dxh \|f\|_{H^3}$ in the right-hand side of
  \eqref{eq:steadystateuniform}.
  
  The analysis of the second part of the decomposition \eqref{eq:defXY} can be carried
  out using the triangle inequality, Proposition \ref{prop:encadrevpIplusPdelta}
  and the definition of $\eta$ in \eqref{eq:defeta} 
  \begin{equation*}
    \left\| \dth \sum_{k=0}^{n-1} (\Id+\dth\Pd)^{n-1-k} (\Lambda_2-\langle \Lambda_2 \rangle \un) \right\|_{\ell^2}
    \leq
    \left(\dth \sum_{k=0}^{n-1} \eta^k\right) \left\|\Lambda_2-\langle \Lambda_2 \rangle \un \right\|_{\ell^2}.
  \end{equation*}
  On the one hand, using the CFL condition \eqref{eq:CFLheat}, we can apply
  Proposition \ref{prop:estimsommepuissanceeta}, to obtain that
  \begin{equation}
    \label{eq:estimYmoinssamoyenne}
  \left\| \dth \sum_{k=0}^{n-1} (\Id+\dth\Pd)^{n-1-k} (\Lambda_2-\langle \Lambda_2 \rangle \un) \right\|_{\ell^2}
  \leq 2 L^2 \left\| \Lambda_2 - \langle \Lambda_2 \rangle \un \right\|_{\ell^2}.
\end{equation}
On the other hand, in view of \eqref{eq:defL2} and the definition \eqref{eq:defrh} of $\rh$, we have
\begin{equation*}
  \|\Lambda_2\|_{\ell^2} \leq \dxh^2 \|\Ld^2 \tilde u^\infty\|_{\ell^2} + |\rh|
  \leq \frac{\dxh^2}{6} \|(\tilde u^\infty)^{(4)}\|_\infty + C \dxh \|f'\|_{\infty}.
\end{equation*}
And, similarly,
\begin{equation*}
  \|\langle \Lambda_2 \rangle \un \|_{\ell^2}
  \leq \frac{\dxh^2}{6} \|(\tilde u^\infty)^{(4)}\|_\infty + C \dxh \|f'\|_{\infty}.
\end{equation*}
Putting these two estimates in the right-hand side of \eqref{eq:estimYmoinssamoyenne} by triangle
inequality, we obtain a bound in $\dxh$ times $\|f'\|_{\infty} + \|f''\|_{\infty}$,
which in turns is controlled by a term in $\dxh$ times $\|f\|_{H^3}$ in the right-hand side
of \eqref{eq:steadystateuniform}. This concludes the proof.
\end{proof}

As we shall see numerically in Section \ref{subsec:1d-inhomog},
the conclusion of Theorem \ref{th:steadystateuniform}
holds within it hypotheses, and also extends to weaker hypotheses.
We will also illustrate in Section \ref{subsec:2d-inhomog} how it extends to dimension 2
and allows to derive a new method for the computation of steady states of fully nonhomogeneous
linear heat problems.

\subsection{A remark on an alternative way to solve \eqref{eq:heat_stat_nh}}
Another possible way to derive a numerical method to solve the nonhomogeneous
linear heat equation with Neumann boundary conditions \eqref{eq:heat_stat_nh}
is to consider the Laplace transform in time of the time-dependent heat equation \eqref{eq:heat_nh}.
Of course, we assume in this section that the condition \eqref{eq:bilan} holds.
Setting for $s\in\C$ with $\Re(s)>0$,
$U(s,x) = \int_0^{+\infty} {\rm e}^{-s t} \tilde u (t,x){\rm d t}$, we obtain
\begin{equation*}
  \left\{
    \begin{array}{rl}
      s U(s,x) - \partial_x^2 U(s,x)   & = f(x) + \tilde u (0,x)\\
      \partial_x U (s,0) & = \beta\\
      \partial_x U (s,L) &  = \gamma.
    \end{array}
  \right.
\end{equation*}
This motivates the introduction of the problem of finding $\tilde u^\infty_s$
as the solution to
\begin{equation}
  \label{eq:heat_nh_Laplace}
  \left\{
    \begin{array}{rl}
      s \tilde u^\infty_s (x) - \partial_x^2 \tilde u^\infty_s (x)   & = f(x) \\
      \partial_x \tilde u^\infty_s (0) & = \beta\\
      \partial_x \tilde u^\infty_s  (L) &  = \gamma,
    \end{array}
  \right.
\end{equation}
Observe that this problem is well posed for all $f\in L^2(0,L)$, and that its solution
$\tilde u_s^\infty$ has zero mean value.
Then, one can check that $\tilde u_s^\infty$ converges to the solution $\tilde u^\infty$ with
zero mean value when $s$ tends to $0$.
Indeed, we have for all $s>0$, $\alpha_{s,0}=0$, $\alpha_{0}=0$, and for all $p\geq 1$,
\begin{equation*}
  \alpha_{s,p} = \frac{\gamma_p}{s+p^2\pi^2/L^2}
  \qquad
  \text{and}
  \qquad
  \alpha_{p} = \frac{\gamma_p}{p^2\pi^2/L^2},
\end{equation*}
where $\tilde u^\infty_s = \sum_{p=0}^{+\infty} \alpha_{s,p} c_p$,
$\tilde u^\infty = \sum_{p=0}^{+\infty} \alpha_p c_p$,
and
$f = \sum_{p=0}^{+\infty} \gamma_p c_p$.
This way, for $s>0$,
\begin{eqnarray*}
  \left\|\tilde u^\infty_{s}-\tilde u^\infty\right\|_{L^2}^2
  & = &
                                                                   \sum_{p=0}^{+\infty} \left(\alpha_{s,p}-\alpha_p\right)^2
                                                              =  \sum_{p=1}^{+\infty} \left(\frac{1}{s+p^2\pi^2/L^2}-\frac{1}{p^2\pi^2/L^2}\right)^2 \gamma_p^2\\
                                                             & = & \frac{L^8}{\pi^8} s^2 \sum_{p=1}^{+\infty} \frac{1}{p^2\left(\frac{L^2}{\pi^2}s+p\right)^2}\gamma_p^2
   \leq  \frac{L^8}{\pi^8} s^2 \|f\|_{L^2}^2.
\end{eqnarray*}
In particular, $\left\|\tilde u^\infty_{s}-\tilde u^\infty\right\|_{L^2} = \mathcal O(s)$ when $s$
tends to $0$, and the convergence follows.
Observe that one has, similarly,
$\left\|\tilde u^\infty_{s}-\tilde u^\infty\right\|_{H^1} = \mathcal O(s)$.

Similarly, in order to solve \eqref{eq:discrheat_stat_nh}, we consider the problem
of solving for $J\geq 2$ and $s>0$ 
\begin{equation}
  \label{eq:discrheat_stat_nh_Laplace}
  s \tilde \vh^\infty_s - \Pd \tilde \vh^\infty_s = \bh,
\end{equation}
where $\bh$ is defined in \eqref{eq:defbh} and satisfies \eqref{eq:defrh}.
In this discrete setting, the system \eqref{eq:discrheat_stat_nh_Laplace} is well-posed
and its solution $\vh^\infty_s$ satisfies $\langle \vh^\infty_s\rangle_{\delta} = 0$.
Moreover, denoting by $\vh^\infty$ the solution to \eqref{eq:heat_stat_nh}
with zero mean value,
we have $\|\tilde \vh^\infty_s-\tilde \vh^\infty\|_{\ell^2}=\mathcal O(s)$ when $s$ tends to $0$
with $J\geq 2$ fixed.
Observe that, in addition, the constant in this $\mathcal O$ does not depend on $J$.
Indeed, denoting $\hat{\mathsf{x}}(\ell):=\langle \mathsf{x},\Wh_\ell\rangle_\delta$ for
any vector $\mathsf{x}\in\R^J$, we have, for $s>0$, and $1\leq \ell\leq J-1$,
\begin{equation*}
\widehat{\tilde \vh^\infty}(\ell)-\widehat{\tilde\vh^\infty}_s(\ell) =\left[\frac{1}{-\lambda_\ell}-\frac{1}{s-\lambda_\ell}\right] \widehat{\bh}(\ell).
\end{equation*}
Since
\begin{equation*}
   \min _{1 \leq l \leq J-1}|\lambda_\ell|=\frac{4}{\delta x^2} \min _{1 \leq l
     \leq J-1} \sin ^2\left( \frac{\pi\ell}{2 J}\right)=\frac{4}{\delta x^2}\sin ^2
   \left(\frac{\pi}{2 J}\right)
   \leq \pi^2\dfrac{(J-1)^2}{L^2J^2}\leq \dfrac{\pi^2}{L^2},
\end{equation*} 
we infer that,  for $s>0$,
\begin{align*}
  \|\tilde \vh^\infty-\tilde \vh_s^\infty\|_{\ell^2}^2&=\sum_{\ell=1}^{J-1}\left(\dfrac{s}{-\lambda_\ell(s-\lambda_\ell)}\right)^2|\widehat{\bh}(\ell)|^2\\
  & \leq s^2 \sum_{\ell=1}^{J-1}\dfrac{1}{\lambda_\ell^4}|\widehat{\bh}(\ell)|^2
    \leq  \dfrac{s^2}{\min_\ell |\lambda_\ell|^4}\|\bh\|_{\ell^2}\\ 
                            &\leq s^2\dfrac{L^8}{\pi^8}\|\bh\|_{\ell^2}^2,
\end{align*}
and the fact that $\|\tilde \vh^\infty_s-\tilde \vh^\infty\|_{\ell^2}=\mathcal O(s)$ with a constant
that does not depend on $J$ is proved.

Therefore, one may solve the well-posed discrete elliptic problem \eqref{eq:discrheat_stat_nh_Laplace}
for small $s>0$ and consider it as an approximate solution to the continuous ill-posed
problem \eqref{eq:heat_stat_nh} with zero mean value.
With the notation above, the error in this strategy is bounded in the following way :
\begin{equation}
  \label{eq:estimerreurLaplace}
  \left\|\tilde \vh^\infty_s-\Pidx \tilde u^\infty\right\|_{L^2}
  \leq \left\| \tilde \vh^\infty_s - \tilde \vh^\infty \right\|_{L^2}
  + \left\| \tilde \vh^\infty - \Pidx \tilde u^\infty \right\|_{L^2} .
\end{equation}
The first term in the right-hand side above is bounded by a constant that does not depend on $J\geq 2$
times $s$, as we explained above.
The second term in the right-hand side of \eqref{eq:estimerreurLaplace} is bounded by $\dxh$ times
a constant that does not depend on $s>0$, as we noticed in Remark \ref{rem:consequniforme}.
As a conclusion, this (direct) alternative method to solve \eqref{eq:heat_stat_nh},
as opposed to the (iterative) method described in \eqref{eq:discrheat_nh}, produces an error
in $\mathcal O(s)+\mathcal O(\dxh)$, and requires solving a symmetric, sparse and
well-posed linear system of size $J$, the condition number of which tends to $+\infty$ as $s$ tends
to $0$.

\section{Numerical results}
\label{sec:num}

% {\color{blue} Parler du terme principal d'ordre dans l'analyse d'erreur, qui vient du
%   $\mathcal L^1$ du terme en $\varepsilon^1$, qui n'\'evalue que la solution exacte au bord du domaine;
%   on peut donc montrer num\'eriquement qu'il n'apparait pas, dans certaines exp\'eriences num\'eriques.}

% {\color{blue} 2022/10/18 : On pourrait peut-\^etre
%   \begin{itemize}
%   \item Produire des simus pour illustrer le th\'eor\`eme \ref{th:mainresult}
%     \begin{itemize}
%     \item dans le cas avec masse au bord
%     \item dans le cas sans masse au bord (ordre + \'elev\'e)
%     \end{itemize}
%   \item proposer des simus dans le cas inhomog\`ene
%     \begin{itemize}
%     \item en 1d pour illustrer
%       \item en 2d pour montrer que \c ca marche encore !
%     \end{itemize}
%   \end{itemize}
% }

In this section, we present numerical experiments illustrating the relevance and sharpness
of Theorem \ref{th:mainresult} for the homogeneous linear heat equation and of Theorem
\ref{th:steadystateuniform} for the nonhomogeneous linear heat equation.
In Section \ref{subsec:1d-homog} (respectively \ref{subsec:1d-inhomog}),
we investigate how the scheme \eqref{eq:discrheat} (resp.  \eqref{eq:discrheat_nh}) behaves
in dimension 1 for several initial data, which allows to discuss the relevance of the hypotheses
of Theorem \ref{th:mainresult} (resp. \ref{th:steadystateuniform}).
% In Section \ref{subsec:1d-inhomog}, we investigate how the scheme \eqref{eq:discrheat_nh} behaves
% in dimension 1 for several initial data and discuss the relevance of the hypotheses of Theorem
% \ref{th:steadystateuniformintime}.
In Section \ref{subsec:2d-inhomog}, we perform numerical experiments on an nonhomogeneous
linear heat equation (similar to \eqref{eq:discrheat}) in dimension 2,
using an extension of the scheme \eqref{eq:discrheat_nh} to
this context and we demonstrate numerically the fact that the {\it uniform in time} order
estimate (analogue to \eqref{eq:steadystateuniform}) of Theorem \ref{th:steadystateuniform}
that allows to solve the corresponding time-dependent linear nonhomogeneous equation
in order to compute an approximation of a steady state still holds in dimension
2.

The Matlab code we developed can be found at \url{https://github.com/paulinelafitte/codes_dl}.

\subsection{1D numerical experiments in the homogeneous setting}
\label{subsec:1d-homog}

We consider in this section the homogeneous linear heat equation \eqref{eq:heat} in one dimension
with homogeneous Neumann boundary conditions.
Our goal is to illustrate numerically the result stated in Theorem \ref{th:mainresult}:
The error of the numerical scheme \eqref{eq:discrheat}, for different values of
$\dth$ and $\dxh$ (through $J$) under the CFL condition \eqref{eq:CFLheat}, at time $n\dth$,
is bounded by $\mathcal O(\dxh)$ where the constant in the $\mathcal O$ can be chosen
{\it independently} of $n$.
We also aim at testing the relevance of the hypotheses of that Theorem.
We perform three numerical experiments corresponding to three different initial data
(a trigonometric polynomial that satisfies the hypothesis \eqref{eq:hypotheseu0},
a smooth function that does not satisfy the hypothesis \eqref{eq:hypotheseu0},
and a compactly supported function that does not either satisfy the hypothesis \eqref{eq:hypotheseu0}).
We comment them below.

First, we consider the trigonometric polynomial initial datum $u^0$
with $(\alpha_i)_{0\leq i\leq 5}=(1,1,5,-1,2,1)$
and $\alpha_i=0$ for $i\geq 6$ where $(\alpha_i)_{i\in\N}$ are defined just after \eqref{eq:baseONcos}.
We set the numerical initial datum $\vh^0=\Pidx u^0$ and the length of the interval $L=1$.
For several values of $J$, we set $\dth=\dxh^2/2$ so that \eqref{eq:CFLheat} is fulfilled.
We plot in the left panel of Figure \ref{fig:HeatHomogPolGDM}
the error $\|\Pidx u(n\dth)-\vh^n\|_{\ell^2}$
at final time $n\dth$ for several values of $n\dth$ as a function of $J\geq 1$ in logarithmic scale.
We observe that, for large $J\geq 1$, the error is indeed bounded by a constant times $\dxh$
that can be chosen independently of $n\dth$.
This illustrates the result of Theorem \ref{th:mainresult}.

Next, we consider the initial datum
\begin{equation}
    \label{eq:u0pasdansdomP2}
    u_0 :
    \begin{pmatrix}
      (0,L) & \longrightarrow & \R\\
      x & \longmapsto & x^2 (L-x)^2
    \end{pmatrix}.
\end{equation}
This function satisfies $u_0\in {\rm Dom} (P)$.
However, $P u_0\notin {\rm Dom} (P)$.
Therefore, $u_0$ does not satisfy \eqref{eq:hypotheseu0}.
We set the numerical initial datum $\vh^0=\Pidx u^0$ and the length of the interval is still $L=1$.
For several values of $J$, we set $\dth=\dxh^2/2$ so that \eqref{eq:CFLheat} is fulfilled.
We plot in the right panel of Figure \ref{fig:HeatHomogPolGDM}
the error $\|\Pidx u(n\dth)-\vh^n\|_{\ell^2}$
at final time $n\dth$ for several values of $n\dth$ as a function of $J\geq 1$ in logarithmic scale.
We observe that, for large $J\geq 1$, the error is once again bounded by a constant times $\dxh$
that can be chosen independently of $n\dth$.
This indicates that the result of Theorem \ref{th:mainresult} seems to hold even if
$u_0$ does not satisfy all the hypotheses of the theorem.

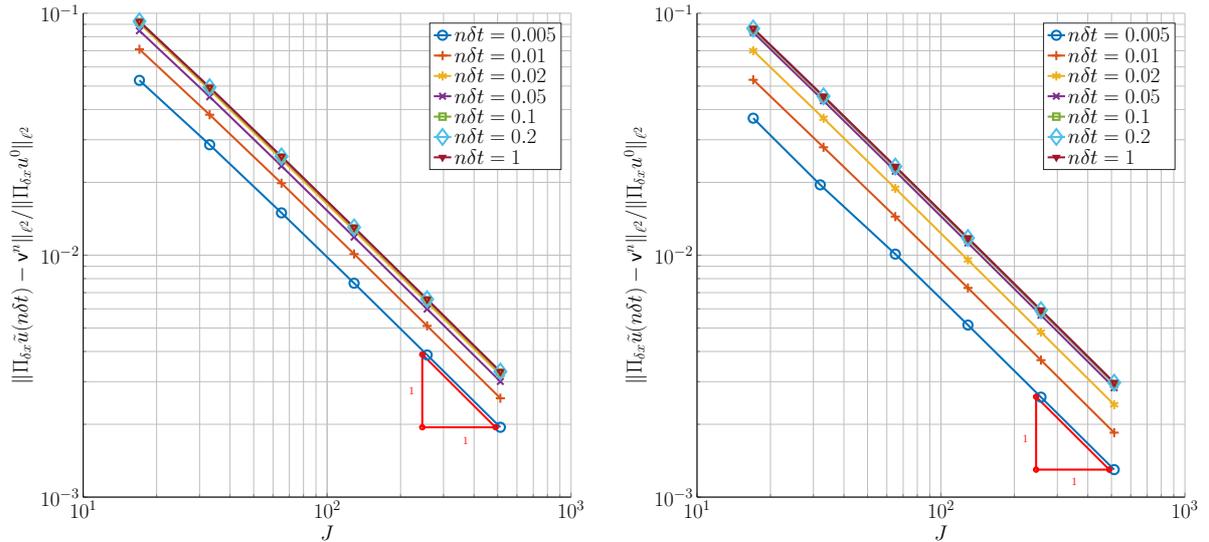
\begin{figure}
  \centering
  \begin{tabular}{cc}
      \resizebox{0.45 \textwidth}{!}{% This file was created by matlab2tikz.
%
%The latest updates can be retrieved from
%  http://www.mathworks.com/matlabcentral/fileexchange/22022-matlab2tikz-matlab2tikz
%where you can also make suggestions and rate matlab2tikz.
%
\definecolor{mycolor1}{rgb}{0.00000,0.44700,0.74100}%
\definecolor{mycolor2}{rgb}{0.85000,0.32500,0.09800}%
\definecolor{mycolor3}{rgb}{0.92900,0.69400,0.12500}%
\definecolor{mycolor4}{rgb}{0.49400,0.18400,0.55600}%
\definecolor{mycolor5}{rgb}{0.46600,0.67400,0.18800}%
\definecolor{mycolor6}{rgb}{0.30100,0.74500,0.93300}%
\definecolor{mycolor7}{rgb}{0.63500,0.07800,0.18400}%
\begin{tikzpicture}[x=0.75pt,y=0.75pt,font=\huge]

\begin{axis}[%
xmode=log,
ymode=log,
yminorticks=true,
width=6.6in,
height=6.6in,
at={(1.107in,1.711in)},
scale only axis,
clip=false,
xmin=10,
xmax=1000,
xlabel={$J$},
ymin=0.001,
ymax=0.1,
%ylabel style={font=\color{white!15!black}}, à enlever pour préserver la taille
ylabel={$\|\Pi_{\delta x}\tilde{u}(n\delta t)-\vh^n\|_{\ell^2}/\|\Pi_{\delta x}u^0\|_{\ell^2}$},
axis background/.style={fill=white},
grid=both,
%xmajorgrids,
%ymajorgrids,
xtick={10,100,1000},
ytick={-3,-2,-1},
axis x line*=bottom,
axis y line*=left,
legend style={legend cell align=left, align=left, draw=white!15!black}
]
\addplot [color=mycolor1, line width=2.0pt, mark size=4.5pt, mark=o, mark options={solid, mycolor1}]
  table[row sep=crcr]{%
17	0.05280205\\
33	0.028599506\\
65	0.014959087\\
129	0.007658912\\
257	0.00386917\\
513	0.001944213\\
};
\addlegendentry{$n\delta t=0.005$}

\addplot [color=mycolor2, line width=2.0pt, mark size=4.5pt, mark=+, mark options={solid, mycolor2}]
  table[row sep=crcr]{%
17	0.070961372	\\
33	0.038063889	\\
65	0.019839284	\\
129	0.010113126	\\
257	0.005102702	\\
513	0.002562914	\\
};
\addlegendentry{$n\delta t=0.01$}

\addplot [color=mycolor3, line width=2.0pt, mark size=4.5pt, mark=asterisk, mark options={solid, mycolor3}]
  table[row sep=crcr]{%
17	0.09080176	\\
33	0.048197756	\\
65	0.024881236	\\
129	0.012647568	\\
257	0.006377167	\\
513	0.003202129	\\
};
\addlegendentry{$n\delta t=0.02$}

\addplot [color=mycolor4, line width=2.0pt, mark size=4.5pt, mark=x, mark options={solid, mycolor4}]
  table[row sep=crcr]{%
17	0.084666962	\\
33	0.045155002	\\
65	0.023353602	\\
129	0.011874141	\\
257	0.005987243	\\
513	0.003006296	\\
};
\addlegendentry{$n\delta t=0.05$}

\addplot [color=mycolor5, line width=2.0pt, mark size=3.2pt, mark=square, mark options={solid, mycolor5}]
  table[row sep=crcr]{%
17	0.091975656	\\
33	0.049056217	\\
65	0.025390625	\\
129	0.012924639	\\
257	0.006521526	\\
513	0.003275806	\\
};
\addlegendentry{$n\delta t=0.1$}

\addplot [color=mycolor6, line width=2.0pt, mark size=7.8pt, mark=diamond, mark options={solid, mycolor6}]
  table[row sep=crcr]{%
17	0.092619267	\\
33	0.049442819	\\
65	0.025601301	\\
129	0.013034673	\\
257	0.006577744	\\
513	0.003304222	\\
};
\addlegendentry{$n\delta t=0.2$}

\addplot [color=mycolor7, line width=2.0pt, mark size=3.0pt, mark=triangle, mark options={solid, rotate=180, mycolor7}]
  table[row sep=crcr]{%
17	0.092482303	\\
33	0.049377879	\\
65	0.025569763	\\
129	0.013019139	\\
257	0.006570036	\\
513	0.003300383	\\
};
\addlegendentry{$n\delta t=1$}

\addplot [color=red, line width=2.0pt, mark=o, mark options={solid, red}, forget plot]
  table[row sep=crcr]{%
245.478102	0.003888426	\\
245.478102	0.001944213	\\
};
\addplot [color=red, line width=2.0pt, mark=o, mark options={solid, red}, forget plot]
  table[row sep=crcr]{%
245.478102	0.003888426	\\
489.956204	0.001944213	\\
};
\addplot [color=red, line width=2.0pt, mark=o, mark options={solid, red}, forget plot]
  table[row sep=crcr]{%
245.478102	0.001944213	\\
489.956204	0.001944213	\\
};
\node[right, align=left, font=\color{red}]
at (axis cs:208.4913517,0.002747894) {1};
\node[right, align=left, font=\color{red}]
at (axis cs:346.9393778,0.001717908) {1};

\end{axis}
\end{tikzpicture}%}
    &
      \resizebox{0.45 \textwidth}{!}{% This file was created by matlab2tikz.
%
%The latest updates can be retrieved from
%  http://www.mathworks.com/matlabcentral/fileexchange/22022-matlab2tikz-matlab2tikz
%where you can also make suggestions and rate matlab2tikz.
%
\definecolor{mycolor1}{rgb}{0.00000,0.44700,0.74100}%
\definecolor{mycolor2}{rgb}{0.85000,0.32500,0.09800}%
\definecolor{mycolor3}{rgb}{0.92900,0.69400,0.12500}%
\definecolor{mycolor4}{rgb}{0.49400,0.18400,0.55600}%
\definecolor{mycolor5}{rgb}{0.46600,0.67400,0.18800}%
\definecolor{mycolor6}{rgb}{0.30100,0.74500,0.93300}%
\definecolor{mycolor7}{rgb}{0.63500,0.07800,0.18400}%
\begin{tikzpicture}[x=0.75pt,y=0.75pt,font=\huge]

\begin{axis}[%
xmode=log,
ymode=log,
yminorticks=true,
width=6.6in,
height=6.6in,
at={(1.107in,1.711in)},
scale only axis,
clip=false,
xmin=10,
xmax=1000,
xlabel={$J$},
ymin=0.001,
ymax=0.1,
%ylabel style={font=\color{white!15!black}}, à enlever pour préserver la taille
ylabel={$\|\Pi_{\delta x}\tilde{u}(n\delta t)-\vh^n\|_{\ell^2}/\|\Pi_{\delta x}u^0\|_{\ell^2}$},
axis background/.style={fill=white},
grid=both,
%xmajorgrids,
%ymajorgrids,
xtick={10,100,1000},
ytick={-3,-2,-1},
axis x line*=bottom,
axis y line*=left,
legend style={legend cell align=left, align=left, draw=white!15!black}
]
\addplot [color=mycolor1, line width=2.0pt, mark size=4.5pt, mark=o, mark options={solid, mycolor1}]
  table[row sep=crcr]{%
17        0.036833750831525\\
32.0000000000003	0.0195317245314914\\
65	0.0101025901889737\\
129	0.0051425268234846\\
257	0.00258948528420058\\
513	0.00129899003472584\\
};
\addlegendentry{$n\delta t=0.005$}

\addplot [color=mycolor2, line width=2.0pt, mark size=4.5pt, mark=+, mark options={solid, mycolor2}]
  table[row sep=crcr]{%
17	0.0530765180794095\\
33	0.0279264942468988\\
65	0.0144364247863321\\
129	0.00732202804556921\\
257	0.00368383901010993\\
513	0.001847548601091\\
};
\addlegendentry{$n\delta t=0.01$}

\addplot [color=mycolor3, line width=2.0pt, mark size=4.5pt, mark=asterisk, mark options={solid, mycolor3}]
  table[row sep=crcr]{%
  17	0.0700074450630044\\
33	0.0368020796646287\\
65	0.0188859194791276\\
129	0.00955765533929203\\
257	0.00480722391423253\\
513	0.00241071019674181\\
};
\addlegendentry{$n\delta t=0.02$}

\addplot [color=mycolor4, line width=2.0pt, mark size=4.5pt, mark=x, mark options={solid, mycolor4}]
  table[row sep=crcr]{%
17	0.0833184155798685\\
33	0.0434637947647205\\
65	0.0222198388809185\\
129	0.0112347089661266\\
257	0.00564919002133165\\
513	0.0028326002143022\\
};
\addlegendentry{$n\delta t=0.05$}

\addplot [color=mycolor5, line width=2.0pt, mark size=3.2pt, mark=square, mark options={solid, mycolor5}]
  table[row sep=crcr]{%
17	0.0856463200996707\\
33	0.0448631907938706\\
65	0.0229851104024796\\
129	0.0116366417731348\\
257	0.00585510926126615\\
513	0.00293684641497032\\
};
\addlegendentry{$n\delta t=0.1$}

\addplot [color=mycolor6, line width=2.0pt, mark size=7.8pt, mark=diamond, mark options={solid, mycolor6}]
  table[row sep=crcr]{%
17	0.086515141	\\
33	0.045367058	\\
65	0.023255303	\\
129	0.011776607	\\
257	0.005926328	\\
513	0.00297277	\\
};
\addlegendentry{$n\delta t=0.2$}

\addplot [color=mycolor7, line width=2.0pt, mark size=3.0pt, mark=triangle, mark options={solid, rotate=180, mycolor7}]
  table[row sep=crcr]{%
17	0.086543606	\\
33	0.045382224	\\
65	0.02326309	\\
129	0.011780561	\\
257	0.00592832	\\
513	0.00297377	\\
};
\addlegendentry{$n\delta t=1$}

\addplot [color=red, line width=2.0pt, mark=o, mark options={solid, red}, forget plot]
  table[row sep=crcr]{%
245.478102	0.00259798	\\
489.956204	0.00129899	\\
};
\addplot [color=red, line width=2.0pt, mark=o, mark options={solid, red}, forget plot]
  table[row sep=crcr]{%
245.478102	0.00259798	\\
245.478102	0.00129899	\\
};
\addplot [color=red, line width=2.0pt, mark=o, mark options={solid, red}, forget plot]
  table[row sep=crcr]{%
245.478102	0.00129899	\\
489.956204	0.00129899	\\
};
\node[right, align=left, font=\color{red}]
at (axis cs:208.4913517,0.001747894) {1};
\node[right, align=left, font=\color{red}]
at (axis cs:330.9393778,0.001157908) {1};

\end{axis}
\end{tikzpicture}%}
  \end{tabular}
  \caption{Numerical error as a function of $J$ for several values of $n\dth$
    when approximating the solution of the homogeneous linear heat equation \eqref{eq:heat}
    using the scheme \eqref{eq:discrheat} under the CFL condition \eqref{eq:CFLheat}
    (logarithmic scales).
    The initial datum given in the text : trigonometric polynomial (left panel) and function
    \eqref{eq:u0pasdansdomP2} (right panel).}
  \label{fig:HeatHomogPolGDM}
\end{figure}

%{\color{blue} voir {\tt Neumann1.m} dans {\tt MatlabG}.}

Last, we consider $L=2$ and the initial datum
\begin{equation}
  \label{eq:chapeau}
  u_0 :
  \begin{pmatrix}
    (0,L) & \longrightarrow & \R\\
    x & \longmapsto & \max \left(1-\frac{|L/2-x|}{\ell},0\right)
  \end{pmatrix},
\end{equation}
with $\ell=1/50$. This initial datum is compactly supported in $(0,L)$.
Even though it does not satisfy the hypothesis \eqref{eq:hypotheseu0},
we may expect Theorem \ref{th:mainresult} to hold, as illustrated numerically and explained above.
The numerical initial datum is set to $\vh^0=\Pidx u_0$.
The numerical results are displayed in Figure \ref{fig:HeatHomogChapeau}.
The diffusion equation \eqref{eq:heat} takes some time to extend the support of the initial datum
significantly up to the boundary of the domain.
Therefore, in the early times of the dynamics (for $t=n\dth<0.02$), the numerical analysis
carried out to prove Theorem \ref{th:mainresult} goes as if $\mathcal L^1_\delta u(t)$
(see \eqref{eq:defL1} in Proposition \ref{prop:Ldelta}) was zero (its cumulated contribution
in the term in $\varepsilon_1$ in \eqref{eq:errorheat} remains below the size of the other terms).
For these short times ($t=n\dth<0.02$), the term in \eqref{eq:estimfinaleL1} plays no role in the
error, and one only sees the term in \eqref{eq:estimfinaleL2} in $\varepsilon_1$.
Therefore, the bound in \eqref{eq:estimmainresult} is in $\mathcal O(\dxh^2+\dth)=\mathcal O(\dxh^2)$
for not too large values of $J$.
In contrast, for larger times ($n\dth>0.05$), the support of the exact solution reaches significantly
the boundary of the domain and the term in $\mathcal L^1_\delta u(t)$ in $\varepsilon_1$ can no longer
be neglected. For these times, and for all values of $J$, the bound \eqref{eq:estimmainresult}
of Theorem \ref{th:mainresult} is this time in $\mathcal O(\dxh+\dth)=\mathcal O(\dxh)$.
This numerical result illustrates once again that the conclusion of Theorem \ref{th:mainresult}
holds true even if all the hypotheses are not met, and that the first splitting of the error
in \eqref{eq:errorheat} and the second splitting of the error in $\varepsilon_1$ using the
decomposition of $\mathcal L_\delta$ in Proposition \ref{prop:Ldelta} are relevant for this numerical
scheme applied to this problem : When the exact solution has a significant nonzero contribution
at the boundary of the domain, it cannot be ignored in the analysis of the scheme, and it can anyway
be controlled to prove a bound as \eqref{eq:estimmainresult} in Theorem \ref{th:mainresult}.

\begin{figure}
  \centering
  \begin{tabular}{cc}
    \resizebox{0.5 \textwidth}{!}{\input{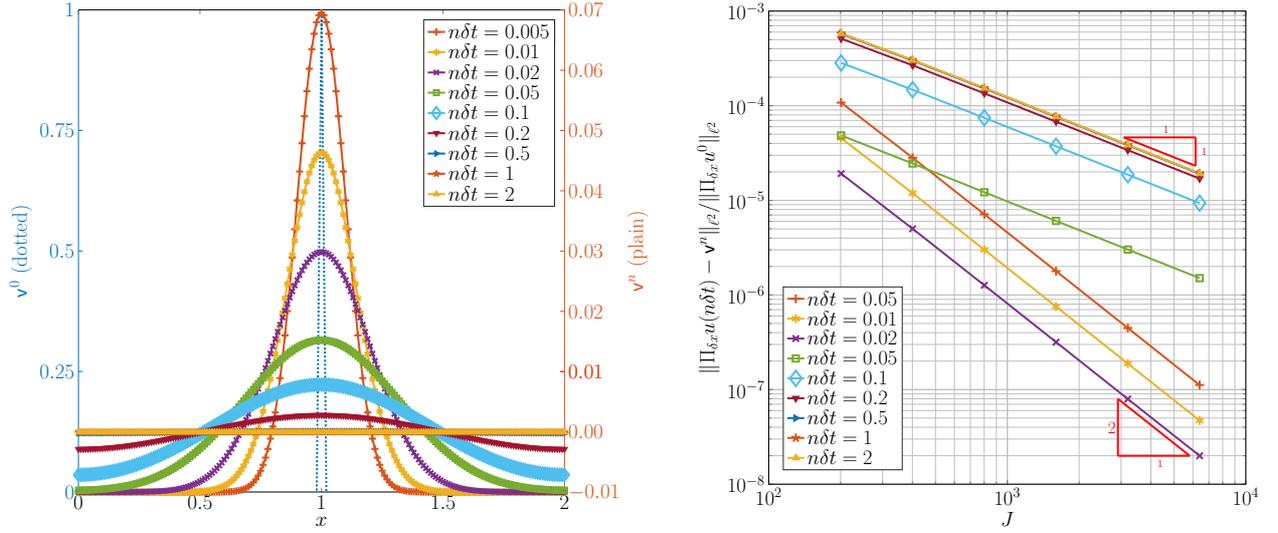}}
    \hskip 1.0cm
    &
      \resizebox{0.44 \textwidth}{!}{% This file was created by matlab2tikz.
%
%The latest updates can be retrieved from
%  http://www.mathworks.com/matlabcentral/fileexchange/22022-matlab2tikz-matlab2tikz
%where you can also make suggestions and rate matlab2tikz.
%
\definecolor{mycolor1}{rgb}{0.85000,0.32500,0.09800}%
\definecolor{mycolor2}{rgb}{0.92900,0.69400,0.12500}%
\definecolor{mycolor3}{rgb}{0.49400,0.18400,0.55600}%
\definecolor{mycolor4}{rgb}{0.46600,0.67400,0.18800}%
\definecolor{mycolor5}{rgb}{0.30100,0.74500,0.93300}%
\definecolor{mycolor6}{rgb}{0.63500,0.07800,0.18400}%
\definecolor{mycolor7}{rgb}{0.00000,0.44700,0.74100}%
\begin{tikzpicture}[x=0.75pt,y=0.75pt,font=\huge]

\begin{axis}[%
width=6.6in,
height=6.6in,
at={(1.107in,1.711in)},
scale only axis,
clip=false,
xmode=log,
xmin=100,
xmax=10000,
xminorticks=true,
xlabel={$J$},
ymode=log,
ymin=1e-08, 
ymax=0.001,
yminorticks=true,
grid=both,
ylabel={$\|\Pi_{\delta x}u(n\delta t)-\vh^n\|_{\ell^2}/\|\Pi_{\delta x}u^0\|_{\ell^2}$},
axis background/.style={fill=white},
legend style={legend cell align=left, align=left, draw=white!15!black},
legend pos=south west
]
\addplot [color=mycolor1, line width=2.0pt, mark size=4.5pt, mark=+, mark options={solid, mycolor1}]
  table[row sep=crcr]{%
201	0.000107934331134278\\
401	2.81875594773413e-05\\
801	7.12879072184655e-06\\
1601	1.78743506413931e-06\\
3201	4.47188367094052e-07\\
6401	1.1181582417736e-07\\
};
\addlegendentry{$n\delta t=0.05$}

\addplot [color=mycolor2, line width=2.0pt, mark size=4.5pt, mark=asterisk, mark options={solid, mycolor2}]
  table[row sep=crcr]{%
201	4.55155631551462e-05\\
401	1.18892084946833e-05\\
801	3.00701007960124e-06\\
1601	7.5397209410714e-07\\
3201	1.88632741470636e-07\\
6401	4.7163592607274e-08\\
};
\addlegendentry{$n\delta t=0.01$}

\addplot [color=mycolor3, line width=2.0pt, mark size=4.5pt, mark=x, mark options={solid, mycolor3}]
  table[row sep=crcr]{%
201	1.91656944868195e-05\\
401	5.00699694141903e-06\\
801	1.26650633318485e-06\\
1601	3.17657578666634e-07\\
3201	7.95611383457554e-08\\
6401	1.99729607065668e-08\\
};
\addlegendentry{$n\delta t=0.02$}

\addplot [color=mycolor4, line width=2.0pt, mark size=3.2pt, mark=square, mark options={solid, mycolor4}]
  table[row sep=crcr]{%
201	4.85817027662156e-05\\
401	2.45113482958964e-05\\
801	1.21902389097114e-05\\
1601	6.06234053900684e-06\\
3201	3.0209108902016e-06\\
6401	1.50762957497285e-06\\
};
\addlegendentry{$n\delta t=0.05$}

\addplot [color=mycolor5, line width=2.0pt, mark size=7.8pt, mark=diamond, mark options={solid, mycolor5}]
  table[row sep=crcr]{%
201	0.000283478239568202\\
401	0.000147603331812906\\
801	7.45331828399466e-05\\
1601	3.73432353541784e-05\\
3201	1.86770185915246e-05\\
6401	9.33811236679456e-06\\
};
\addlegendentry{$n\delta t=0.1$}

\addplot [color=mycolor6, line width=2.0pt, mark size=3.0pt, mark=triangle, mark options={solid, rotate=180, mycolor6}]
  table[row sep=crcr]{%
201	0.000511284466413578\\
401	0.000267202053987455\\
801	0.000135176088110034\\
1601	6.77902110005152e-05\\
3201	3.39207455349445e-05\\
6401	1.69636352402622e-05\\
};
\addlegendentry{$n\delta t=0.2$}

\addplot [color=mycolor7, line width=2.0pt, mark size=3.0pt, mark=triangle, mark options={solid, rotate=270, mycolor7}]
  table[row sep=crcr]{%
201	0.000573654927968478\\
401	0.000300123430604124\\
801	0.000151913604574385\\
1601	7.62047944136939e-05\\
3201	3.81364277517513e-05\\
6401	1.9073190049467e-05\\
};
\addlegendentry{$n\delta t=0.5$}

\addplot [color=mycolor1, line width=2.0pt, mark size=4.5pt, mark=star, mark options={solid, mycolor1}]
  table[row sep=crcr]{%
201	0.000579711544501858\\
401	0.000303312735213654\\
801	0.000153533142914422\\
1601	7.70185119577736e-05\\
3201	3.85439761203919e-05\\
6401	1.92770993058365e-05\\
};
\addlegendentry{$n\delta t=1$}

\addplot [color=mycolor2, line width=2.0pt, mark size=3.0pt, mark=triangle, mark options={solid, mycolor2}]
  table[row sep=crcr]{%
201	0.000579770935110448\\
401	0.000303343821210869\\
801	0.000153548881224893\\
1601	7.70264076823073e-05\\
3201	3.85479277185918e-05\\
6401	1.9279075630457e-05\\
};
\addlegendentry{$n\delta t=2$}

\addplot [color=red, line width=2.0pt, forget plot]
  table[row sep=crcr]{%
2909.54545454545	7.98918428262671e-08\\
5819.09090909091	1.99729607065668e-08\\
};
\addplot [color=red, line width=2.0pt, forget plot]
  table[row sep=crcr]{%
2909.54545454545	7.98918428262671e-08\\
2909.54545454545	1.99729607065668e-08\\
};
\addplot [color=red, line width=2.0pt, forget plot]
  table[row sep=crcr]{%
2909.54545454545	1.99729607065668e-08\\
5819.09090909091	1.99729607065668e-08\\
};
\node[right, align=left, font=\color{red}]
at (axis cs:2550,3.9946e-08) {\LARGE 2};
\node[right, align=left, font=\color{red}]
at (axis cs:4114.719,1.6644e-08) {1};
\addplot [color=red, line width=2.0pt, forget plot]
  table[row sep=crcr]{%
3077.40384615385	4.62697815130969e-05\\
6154.80769230769	4.62697815130969e-05\\
};
\addplot [color=red, line width=2.0pt, forget plot]
  table[row sep=crcr]{%
6154.80769230769	4.62697815130969e-05\\
6154.80769230769	2.31348907565484e-05\\
};
\addplot [color=red, line width=2.0pt, forget plot]
  table[row sep=crcr]{%
3077.40384615385	4.62697815130969e-05\\
6154.80769230769	2.31348907565484e-05\\
};
\node[right, align=left, font=\color{red}]
at (axis cs:6277.904,3.2718e-05) {1};
\node[right, align=left, font=\color{red}]
at (axis cs:4352.106,5.5524e-05) {1};

\end{axis}
 
\end{tikzpicture}%

%%% Local Variables: 
%%% mode: latex
%%% TeX-engine: default
%%% ispell-local-dictionary: "english"
%%% TeX-master: "fifstandalone.tex"
%%% End: }
  \end{tabular}
  \caption{Numerical simulation of the solution of the homogeneous linear heat equation \eqref{eq:heat}
    associated to the initial datum \eqref{eq:chapeau}
    using the scheme \eqref{eq:discrheat} under the CFL condition \eqref{eq:CFLheat}
    for several values of $n\dth$.
    Left : Numerical approximation $\vh^n$ of the solution $u(n\dth)$ as a function of $j\dxh$
    computed for several values of $n\dth$ with $J=201$ (multiple scales).
    Right : Numerical error as a function of $J$ for the same values of $n\dth$ (logarithmic scales).
    }
  \label{fig:HeatHomogChapeau}
\end{figure}

% {\color{blue} 20230919 :
%   For this function, one has
%   \begin{equation*}
%     \alpha_0 = \frac{L^4}{30}
%     \qquad \text{and} \qquad
%     \alpha_p = - 12 \sqrt{2} L ^ 4 \frac{(-1)^p +1}{p^4\pi^4}.
%     \quad
%     (p\geq 1)
%   \end{equation*}
%   At time $t>0$ and $M\geq 1$, one has
%   \begin{eqnarray*}
%       \left\|u(t,x)-\alpha_0-\sum_{k=1}^M \alpha_1 c_p {\rm e}^{-k\frac{\pi^2}{L^2}t}\right\|^2_{L^2}
%     &
%       =
%     &
%       \sum_{k=M+1}^{+\infty} \alpha_k^2 {\rm e}^{-2k \frac{\pi^2}{L^2}t}\\
%     &
%       \leq
%     &
%       12^2 \times 2 \times 4 \frac{L^8}{\pi^8} \sum_{k=M+1}^{+\infty} \frac{{\rm e}^{-2k\frac{\pi^2}{L^2}t}}{k^8}\\
%     &
%       \leq
%     &
%       2^7 3^2 \frac{L^8}{\pi^8} {\rm e}^{-2(M+1)\frac{\pi^2}{L^2}t} \frac{1}{1-{\rm e}^{-2\frac{\pi^2}{L^2}t}}.
%   \end{eqnarray*}
% }

\subsection{1D numerical experiments in the nonhomogeneous setting}
\label{subsec:1d-inhomog}

In the nonhomogeneous setting of Section \ref{sec:computationsteadystate},
we consider a given source $f$ and fluxes $\beta$ and $\gamma$ such that
the balance equation \eqref{eq:bilan} between the source and the fluxes is fulfilled.
We compute approximations of the steady state $\tilde u^\infty$ solution to \eqref{eq:heat_stat_nh}
(with a constraint on its mean value).
To do so, we implement the algorithm \eqref{eq:discrheat_nh}, which produces approximations
of the solution $\tilde u$ of the nonhomogeneous time-dependant heat equation \eqref{eq:heat_nh},
associated to some initial datum $\tilde u^0$ with the same mean value as $\tilde u^\infty$.
Our goal is to illustrate numerically the validity of Theorem \ref{th:steadystateuniform},
and in particular of the bound \eqref{eq:steadystateuniform},
to discuss the necessity of its hypothesis, and to demonstrate how it allows to compute
numerical approximations of the steady state $\tilde u^\infty$.
We consider the case $L=2$ and the continuous and piecewise affine source term
\begin{equation*}
  f : x \longmapsto
  \left\{
    \begin{matrix}
      1 & \text{ if } 0\leq x\leq 1/2,\\[2 mm]
      2x & \text{ if } 1/2< x \leq 2,
    \end{matrix}
  \right.
\end{equation*}
and the boundary conditions $\beta=1/2$ and $\gamma=-15/4$.
Since $$\int_0^L f(x){\rm d}x = (1/2+(2^2-(1/2)^2)) = 17/4=\beta-\gamma,$$
we have that \eqref{eq:bilan} holds.
The solution $\tilde u^\infty$ to \eqref{eq:heat_stat_nh}
with $\langle \tilde u^\infty\rangle = -\frac{193}{384}$ reads
\begin{equation*}
  \tilde u^\infty : x \longmapsto
  \left\{
    \begin{matrix}
      -\frac{(x-1/2)^2}{2} & \text{ if } 0\leq x\leq 1/2,\\[2 mm]
      -\frac{x^3}{3} - \frac14 x - \frac{1}{12} & \text{ if } 1/2< x \leq 2.
    \end{matrix}
  \right.
\end{equation*}
We define the function
\[
  w:x\longmapsto \frac{\gamma-\beta}{2L} x^2 + \beta x - \frac{\gamma-\beta}{6} L - \frac{\beta}{2}L,
\]
which satisfies $-\partial_x^2 w = 0$ over $(0,L)$, the boundary conditions $\partial_x w(0)=\beta$
and $\partial_x w(L)=\gamma$ (which correspond to the first two lines of \eqref{eq:hypothesesu0tilde}),
and has zero mean value.

We use two different initial data as $\tilde u^0$ and $\tilde \vh^0$:
\begin{itemize}
\item First, we consider $\tilde u^0=\langle \tilde u ^\infty\rangle {\mathds 1} + w$,
and $\tilde \vh^0=\Pidx \tilde u^0$.
In particular, in this case, $\tilde u^0$ satisfies the first line of the hypotheses
\eqref{eq:hypothesesu0tilde},
and the first term in the right hand side of \eqref{eq:steadystateuniform} vanishes.
\item Second, we consider $\tilde u^0=\langle \tilde u ^\infty\rangle {\mathds 1}$,
and $\tilde \vh^0=\Pidx \tilde u^0$.
In particular, in this case, $\tilde u^0$ {\it does not} satisfy the first line of the hypotheses
\eqref{eq:hypothesesu0tilde},
and the first term in the right hand side of \eqref{eq:steadystateuniform} still vanishes.
\end{itemize}

For the interpretation of the numerical results displayed in Figure \ref{fig:HeatInhomogPol},
we point out the inequality, that is valid for all $J\geq 2$ and $\dth>0$ such that
\eqref{eq:CFLheat} holds and all $n\in\N$,
  \begin{equation}
  \label{eq:decoupageBelge}
    \left\| \Pidx \tilde u^\infty - \tilde \vh^n\right\|_{\ell^2}
    \leq
    \left\| \Pidx \tilde u^\infty - \Pidx \tilde u (n\dth)\right\|_{\ell^2}
    +
    \left\| \Pidx \tilde u (n\dth) - \tilde \vh^n\right\|_{\ell^2}.
  \end{equation}
  The error displayed in Figure \ref{fig:HeatInhomogPol} corresponds to the left hand side of this
  inequality. In the right-hand side of \eqref{eq:decoupageBelge},
  \begin{itemize}
  \item  the first term tends to $0$ exponentially fast and independently of $J\geq 2$
  when $n\dth$ tends to $+\infty$ because $\tilde u$ solves \eqref{eq:heat_nh}, $\tilde u^\infty$
  solves \eqref{eq:heat_stat_nh} and $\langle \tilde u ^0\rangle = \langle \tilde u ^\infty\rangle$,
  \item thanks to \eqref{eq:steadystateuniform} of Theorem \ref{th:steadystateuniform},
  the second term is bounded {\it independently of} $n$, under the CFL condition
  \eqref{eq:CFLheat} (note that the first term in the right-hand side of
  \eqref{eq:steadystateuniform} vanishes in our two cases), by $\mathcal O(\dxh)$.
  \end{itemize}
 
  Therefore, for a fixed $n\dth$ and varying $\dxh$, under the CFL condition \eqref{eq:CFLheat},
  we should see (in logarithmic scales, when plotting the error as a function of $J\geq 2$)
  a straight line of slope $-1$ that starts to stall when the first term in the right hand side
  of \eqref{eq:decoupageBelge} becomes bigger than the second term.
  Moreover, all these errors (no matter the value of $n\dth$, in particular when it is big),
  in the regime when they are in $\mathcal O(\dxh)$ (straight lines of slope $-1$), remain
  under a common straight line of slope $-1$. This illustrates the fact that the constant
  $C$ in Theorem \ref{th:steadystateuniform} {\it does not} depend on $n\dth$ nor $J\geq 2$ provided
  that the CFL condition \eqref{eq:CFLheat} holds.

  These numerical results illustrate that the conclusion of Theorem \ref{th:steadystateuniform}
  holds true way beyond its hypotheses. First, observe that the source term $f$ is {\it not}
  in $\mathcal C^3([0,L])$.
  Second, the two initial data described above {\it do not}
  satisfy the hypothesis \eqref{eq:hypothesesu0tilde}.
  Indeed, the hypotheses \eqref{eq:hypothesesu0tilde} appear as technical hypotheses
  ensuring the simplicity of the proof in absence of initial
  layer in the solution of \eqref{eq:heat_nh}, so that $u(t)=\tilde u(t)-\tilde u^\infty$
  is a solution of the linear homogeneous heat equation \eqref{eq:heat} with an initial datum
  satisfying \eqref{eq:hypotheseu0}, that ensures that $u(0)$, $P u (0)$ and $P^2 u(0)$
  are in the domain of $P$ (see also Remark \ref{rem:domainu0} for the homogeneous setting).
  This point allows to consider virtually any initial datum in $L^2(0,L)$
  with the correct mean value to compute
  numerically approximations of $\tilde u^\infty$ using the scheme \eqref{eq:discrheat_nh}.
  This is of particular importance since computing the function $w$ above may not be accessible
  in higher dimensions and more complicated geometries, as is illustrated in the next Section.

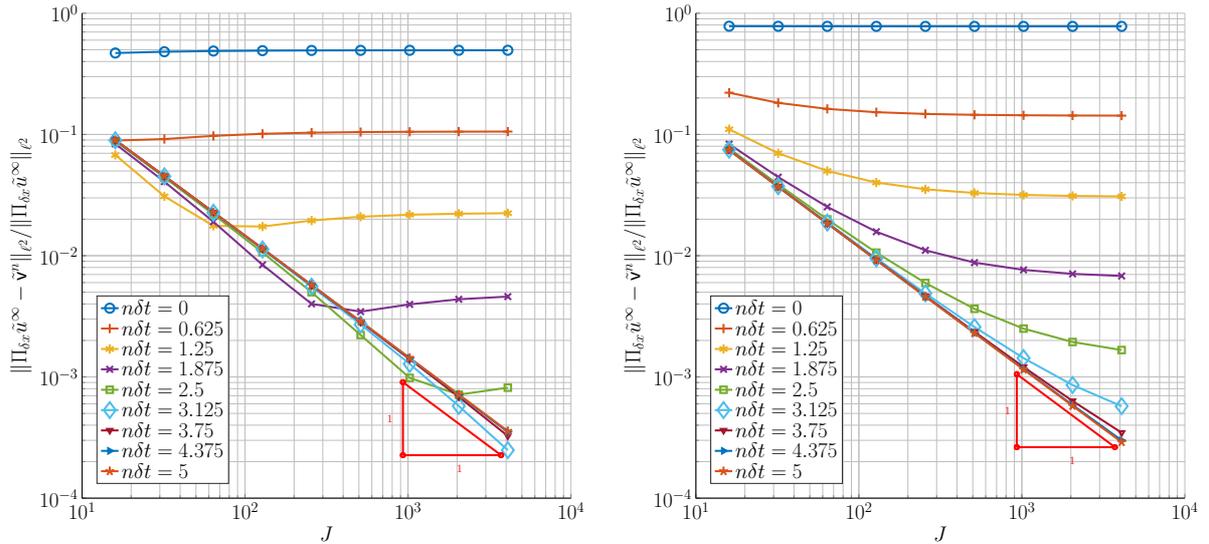
\begin{figure}
  \centering
  \begin{tabular}{cc}
      \resizebox{0.45 \textwidth}{!}{% This file was created by matlab2tikz.
%
%The latest updates can be retrieved from
%  http://www.mathworks.com/matlabcentral/fileexchange/22022-matlab2tikz-matlab2tikz
%where you can also make suggestions and rate matlab2tikz.
%
\definecolor{mycolor1}{rgb}{0.00000,0.44700,0.74100}%
\definecolor{mycolor2}{rgb}{0.85000,0.32500,0.09800}%
\definecolor{mycolor3}{rgb}{0.92900,0.69400,0.12500}%
\definecolor{mycolor4}{rgb}{0.49400,0.18400,0.55600}%
\definecolor{mycolor5}{rgb}{0.46600,0.67400,0.18800}%
\definecolor{mycolor6}{rgb}{0.30100,0.74500,0.93300}%
\definecolor{mycolor7}{rgb}{0.63500,0.07800,0.18400}%
\begin{tikzpicture}[x=0.75pt,y=0.75pt,font=\huge]

\begin{axis}[%
xmode=log,
xmin=10,
xmax=10000,
xminorticks=true,
xlabel={$J$},
ymode=log,
ymin=0.0001,
ymax=1,
yminorticks=true,
width=6.6in,
height=6.6in, 
at={(1.107in,1.711in)},
scale only axis,
clip=false,
ylabel={$\|\Pi_{\delta x}\tilde{u}^\infty-\tilde{\vh}^n\|_{\ell^2}/\|\Pi_{\delta x}\tilde{u}^\infty\|_{\ell^2}$},
axis background/.style={fill=white},
grid=both,
xtick={10,100,1000,10000},
axis x line*=bottom,
axis y line*=left,
legend style={legend cell align=left, align=left, draw=white!15!black},
legend pos=south west
]
\addplot [color=mycolor1, line width=2.0pt, mark size=4.5pt, mark=o, mark options={solid, mycolor1}]
  table[row sep=crcr]{%
16	0.469872232914211\\
32	0.481282910316216\\
64	0.48783525647357\\
128	0.491340754520728\\
256	0.493153215957968\\
512	0.49407468497516\\
1024 	0.494539268652024\\
2048	 0.494772527784075\\
4096	 0.494889399802008\\
};
\addlegendentry{$n\delta t=0$}

\addplot [color=mycolor2, line width=2.0pt, mark size=4.5pt, mark=+, mark options={solid, mycolor2}]
  table[row sep=crcr]{%
16	0.089139571081858\\
32	0.0916461153666249\\
64	0.0973802593739279\\
128	0.10128467230602\\
256	0.103477329437632\\
512	0.104631194654223\\
1024 	0.105222185215217\\
2048	 0.105521154194604\\
4096	 0.105671502010158\\
};
\addlegendentry{$n\delta t=0.625$}

\addplot [color=mycolor3, line width=2.0pt, mark size=4.5pt, mark=asterisk, mark options={solid, mycolor3}]
  table[row sep=crcr]{%
16	0.0677828459443521\\
32	0.0308745606905776\\
64	0.0176265056489903\\
128	0.0174056775890393\\
256	0.0194975657492115\\
512	0.0209613046820535\\
1024	 0.0217758678277493\\
2048	 0.0222013667501223\\
4096	 0.0224183875914577\\
};
\addlegendentry{$n\delta t=1.25$}

\addplot [color=mycolor4, line width=2.0pt, mark size=4.5pt, mark=x, mark options={solid, mycolor4}]
  table[row sep=crcr]{%
16	0.0835983904750048\\
32	0.0407721925002147\\
64	0.0190556019252683\\
128	0.00840617066665481\\
256	0.00400598992031899\\
512	0.00345720331809648\\
1024	 0.00396830697432754\\
2048 	0.00437160299331928\\
4096	 0.00459997198696192\\
};
\addlegendentry{$n\delta t=1.875$}

\addplot [color=mycolor5, line width=2.0pt, mark size=3.2pt, mark=square, mark options={solid, mycolor5}]
  table[row sep=crcr]{%
16	0.0883323103032969\\
32	0.0443018600948504\\
64	0.0219259483011996\\
128	0.010643124527995\\
256	0.00499501844308448\\
512	0.00221691950990418\\
1024	 0.000982124532647526\\
2048	 0.000714793115375582\\
4096 	0.000815502292188839\\
};
\addlegendentry{$n\delta t=2.5$}

\addplot [color=mycolor6, line width=2.0pt, mark size=7.8pt, mark=diamond, mark options={solid, mycolor6}]
  table[row sep=crcr]{%
16	0.0895652552145596\\
32	0.0451695633792421\\
64	0.0226368591791936\\
128	0.011273707966155\\
256	0.00556681451204611\\
512	0.00270827075531116\\
1024 	0.00128096053344898\\
2048	 0.000575848585561642\\
4096 	0.000248868796433517\\
};
\addlegendentry{$n\delta t=3.125$}

\addplot [color=mycolor7, line width=2.0pt, mark size=3.0pt, mark=triangle, mark options={solid, rotate=180, mycolor7}]
  table[row sep=crcr]{%
16	0.0898794064895067\\
32	0.0453746827105653\\
64	0.0227989977780909\\
128	0.0114166148035317\\
256	0.00569996599324547\\
512	0.00283509780529963\\
1024	 0.00140112301459448\\
2048 	0.000684010251128335\\
4096	 0.000325999450002809\\
};
\addlegendentry{$n\delta t=3.75$}

\addplot [color=mycolor1, line width=2.0pt, mark size=3.0pt, mark=triangle, mark options={solid, rotate=270, mycolor1}]
  table[row sep=crcr]{%
16	0.0899590551492127\\
32	0.0454227974711852\\
64	0.0228354703000909\\
128	0.0114481313409449\\
256	0.00572917352810055\\
512	0.00286313203649298\\
1024	 0.00142844517229649\\
2048	 0.000710692521302916\\
4096	 0.000351736211628458\\
};
\addlegendentry{$n\delta t=4.375$}

\addplot [color=mycolor2, line width=2.0pt, mark size=4.5pt, mark=star, mark options={solid, mycolor2}]
  table[row sep=crcr]{%
16	0.0899792242978689\\
32	0.0454340640612092\\
64	0.0228436508770572\\
128	0.0114550450636646\\
256	0.00573551414682558\\
512	0.00286919766402009\\
1024	 0.00143437077947486\\
2048	 0.000716536755784824\\
4096	 0.000357515031800837\\
};
\addlegendentry{$n\delta t=5$}

\addplot [color=red, line width=2.0pt, mark=o, mark options={solid, red}, forget plot]
  table[row sep=crcr]{%
930.909090909091	0.000904977441576426\\
3723.63636363636	0.000226244360394106\\
};
\addplot [color=red, line width=2.0pt, mark=o, mark options={solid, red}, forget plot]
  table[row sep=crcr]{%
930.909090909091	0.000904977441576426\\
930.909090909091	0.000226244360394106\\
};
\addplot [color=red, line width=2.0pt, mark=o, mark options={solid, red}, forget plot]
  table[row sep=crcr]{%
930.909090909091	0.000226244360394106\\
3723.63636363636	0.000226244360394106\\
};
\node[right, align=left, font=\color{red}]
at (axis cs:704.727,4.5249e-04) {1};
\node[right, align=left, font=\color{red}]
at (axis cs:1882.818,1.7403e-04) {1}; 
\end{axis}

\end{tikzpicture}%

%%% Local Variables: 
%%% mode: latex
%%% TeX-engine: default
%%% ispell-local-dictionary: "english"
%%% TeX-master: "fifstandalone.tex"
%%% End: }
    &
      \resizebox{0.45 \textwidth}{!}{% This file was created by matlab2tikz.
%
%The latest updates can be retrieved from
%  http://www.mathworks.com/matlabcentral/fileexchange/22022-matlab2tikz-matlab2tikz
%where you can also make suggestions and rate matlab2tikz.
%
\definecolor{mycolor1}{rgb}{0.00000,0.44700,0.74100}%
\definecolor{mycolor2}{rgb}{0.85000,0.32500,0.09800}%
\definecolor{mycolor3}{rgb}{0.92900,0.69400,0.12500}%
\definecolor{mycolor4}{rgb}{0.49400,0.18400,0.55600}%
\definecolor{mycolor5}{rgb}{0.46600,0.67400,0.18800}%
\definecolor{mycolor6}{rgb}{0.30100,0.74500,0.93300}%
\definecolor{mycolor7}{rgb}{0.63500,0.07800,0.18400}%
\begin{tikzpicture}[x=0.75pt,y=0.75pt,font=\huge]

\begin{axis}[%
xmode=log,
xmin=10,
xmax=10000,
xminorticks=true,
xlabel={$J$},
ymode=log,
ymin=0.0001,
ymax=1,
yminorticks=true,
width=6.6in,
height=6.6in, 
at={(1.107in,1.711in)},
scale only axis,
clip=false,
ylabel={$\|\Pi_{\delta x}\tilde{u}^\infty-\tilde{\vh}^n\|_{\ell^2}/\|\Pi_{\delta x}\tilde{u}^\infty\|_{\ell^2}$},
axis background/.style={fill=white},
grid=both,
xtick={10,100,1000,10000},
axis x line*=bottom,
axis y line*=left,
legend style={legend cell align=left, align=left, draw=white!15!black},
legend pos=south west
]
\addplot [color=mycolor1, line width=2.0pt, mark size=4.5pt, mark=o, mark options={solid, mycolor1}]
  table[row sep=crcr]{%
16	0.781546803783786\\
32	0.780550044992207\\
64	0.779865489502801\\
128	0.779473272442363\\
256	0.779264204961326\\
512	0.779156368775226\\
1024	0.779101617005451\\
2048	0.77907403167865\\
4096	0.779060186526008\\
};
\addlegendentry{$n\delta t=0$}

\addplot [color=mycolor2, line width=2.0pt, mark size=4.5pt, mark=+, mark options={solid, mycolor2}]
  table[row sep=crcr]{%
16	0.221035671960197\\
32	0.182250511687521\\
64	0.162548280139894\\
128	0.152646352308108\\
256	0.147688037459623\\
512	0.145207904321276\\
1024	 0.14396771682175\\
2048	 0.143347609278752\\
4096	 0.143037554210237\\
};
\addlegendentry{$n\delta t=0.625$}

\addplot [color=mycolor3, line width=2.0pt, mark size=4.5pt, mark=asterisk, mark options={solid, mycolor3}]
  table[row sep=crcr]{%
16	0.110417272075412\\
32	0.0700091916085892\\
64	0.0500082440084078\\
128	0.0401570960585108\\
256	0.0353050732019033\\
512	0.0329067819536653\\
1024 	0.0317163707030809\\
2048 	0.031123636377113\\
4096 	0.030827927990416\\
};
\addlegendentry{$n\delta t=1.25$}

\addplot [color=mycolor4, line width=2.0pt, mark size=4.5pt, mark=x, mark options={solid, mycolor4}]
  table[row sep=crcr]{%
16	0.0833334326345185\\
32	0.0444983570901598\\
64	0.0252823999890966\\
128	0.0157713611441798\\
256	0.0110798269978399\\
512	0.0087745717361459\\
1024	 0.00764199830145003\\
2048	 0.00708343151506154\\
4096	 0.00680661968601351\\
};
\addlegendentry{$n\delta t=1.875$}

\addplot [color=mycolor5, line width=2.0pt, mark size=3.2pt, mark=square, mark options={solid, mycolor5}]
  table[row sep=crcr]{%
16	0.0766388676010802\\
32	0.038717381622374\\
64	0.0199400248572739\\
128	0.0106035108269779\\
256	0.00595564773025614\\
512	0.00364614766525346\\
1024	 0.00250420457057708\\
2048	 0.00194264759934415\\
4096	 0.00166695285680221\\
};
\addlegendentry{$n\delta t=2.5$}

\addplot [color=mycolor6, line width=2.0pt, mark size=7.8pt, mark=diamond, mark options={solid, mycolor6}]
  table[row sep=crcr]{%
16	0.0749594457112208\\
32	0.0373837364308426\\
64	0.0187691419064519\\
128	0.0095064388661604\\
256	0.00488692187167422\\
512	0.00258101324165757\\
1024	 0.00143043116056854\\
2048	 0.000857677743473743\\
4096	 0.000573999187437713\\
};
\addlegendentry{$n\delta t=3.125$}

\addplot [color=mycolor7, line width=2.0pt, mark size=3.0pt, mark=triangle, mark options={solid, rotate=180, mycolor7}]
  table[row sep=crcr]{%
16	0.0745354291719944\\
32	0.0370728827182418\\
64	0.0185085430310187\\
128	0.00926896495717301\\
256	0.00466003985621245\\
512	0.00235837029886759\\
1024	 0.00120832733219231\\
2048 	0.000633667522219228\\
4096 	0.000346702119276289\\
};
\addlegendentry{$n\delta t=3.75$}

\addplot [color=mycolor1, line width=2.0pt, mark size=3.0pt, mark=triangle, mark options={solid, rotate=270, mycolor1}]
  table[row sep=crcr]{%
16	0.0744281725090721\\
32	0.0370002041104481\\
64	0.0184502385329866\\
128	0.00921710311986231\\
256	0.00461119242472842\\
512	0.0023109341526014\\
1024 	0.00116148794488144\\
2048 	0.000586944767162764\\
4096 	0.000299734867870547\\
};
\addlegendentry{$n\delta t=4.375$}

\addplot [color=mycolor2, line width=2.0pt, mark size=4.5pt, mark=star, mark options={solid, mycolor2}]
  table[row sep=crcr]{%
16	0.0744010280527914\\
32	0.0369831986399822\\
64	0.0184371769872969\\
128	0.00920575059216018\\
256	0.0046006306965376\\
512	0.00230075019601646\\
1024 	0.00115148431732567\\
2048 	0.000577020937456362\\
4096 	0.000289832546017119\\
};
\addlegendentry{$n\delta t=5$}

\addplot [color=red, line width=2.0pt, mark=o, mark options={solid, red}, forget plot]
  table[row sep=crcr]{%
930.909090909091	0.00105393653097134\\
3723.63636363636	0.000263484132742836\\
};
\addplot [color=red, line width=2.0pt, mark=o, mark options={solid, red}, forget plot]
  table[row sep=crcr]{%
930.909090909091	0.00105393653097134\\
930.909090909091	0.000263484132742836\\
};
\addplot [color=red, line width=2.0pt, mark=o, mark options={solid, red}, forget plot]
  table[row sep=crcr]{%
930.909090909091	0.000263484132742836\\
3723.63636363636	0.000263484132742836\\
};
\node[right, align=left, font=\color{red}]
at (axis cs:744.727,5.2697e-04) {1};
\node[right, align=left, font=\color{red}]
at (axis cs:1861.818,2.0268e-04) {1};
\end{axis}

\end{tikzpicture}%
%%% Local Variables: 
%%% mode: latex
%%% TeX-engine: default
%%% ispell-local-dictionary: "english"
%%% TeX-master: "fifstandalone.tex"
%%% End: }    
  \end{tabular}
  \caption{Numerical error as a function of $J$ for several values of $n\dth$
    when approximating the solution of the linear nonhomogeneous stationary heat equation
    \eqref{eq:heat_stat_nh} using the scheme \eqref{eq:discrheat_nh}
    under the CFL condition \eqref{eq:CFLheat} (logarithmic scales).
    Initial datum : $\tilde u^0=\langle \tilde u^\infty\rangle {\mathds 1}+w$ (left panel)
    and $\tilde u^0=\langle \tilde u^\infty\rangle {\mathds 1}$ (right panel).}
  \label{fig:HeatInhomogPol}
\end{figure}

% {\color{blue} voir {\tt Neumann3bis.m} dans {\tt MatlabG}.}

% {\color{blue} 20230601 : For the plots by Pauline, we may use that, for all $\dth>0$, $J\geq 2$
%   such that \eqref{eq:CFLheat} holds, and all $n\geq 1$,
%   \begin{equation*}
%     \left\| \Pidx \tilde u^\infty - \vh^n\right\|_{\ell^2}
%     \leq
%     \left\| \Pidx \tilde u^\infty - \Pidx \tilde u (n\dth)\right\|_{\ell^2}
%     +
%     \left\| \Pidx \tilde u (n\dth) - \vh^n\right\|_{\ell^2}.
%   \end{equation*}
%   The first term in the right-hand side is small for $n\dth$ big enough, due to the convergence
%   of $\tilde u(n\dth)$ towards $\tilde u^\infty$.
%   So, if $n\dth$ is big enough, the main contribution to the term plotted by Pauline
%   (in the left-hand side) is the second term in the right-hand side.
%   This term can be bounded {\it independantly of $n\geq 0$} using Theorem \ref{th:steadystateuniform}.
% }

%\subsection{2D numerical experiments in the homogeneous setting}

\subsection{2D numerical experiments in the nonhomogeneous setting}
\label{subsec:2d-inhomog}
We consider, for some $\alpha,\beta>0$ and $x_0,y_0\in\R$,
\begin{itemize}
\item the rectangular domain $\Omega=(0,2)\times(0,4)$, and its boundary
  $\Gamma=\overline{\Omega}\setminus\Omega=\Gamma_1\cup\Gamma_2$
  where $\Gamma_1=(\{0\}\times[0,4])\cup(\{2\}\times[0,4]\})$
  and $\Gamma_2=([0,2]\times\{0\})\cup ([0,2]\times\{4\})$,
\item the source term defined over $\Omega$
  $$f:(x,y)\longmapsto(2\alpha(1-2\alpha(x-x_0)^2)+2\beta(1-2\beta(y-y_0)^2))\,\exp^{-\alpha(x-x_0)^2-\beta(y-y_0)^2},$$
\item  the boundary conditions
  $ g_1 :(x,y)\longmapsto -2\alpha(x-x_0)\,\exp^{-\alpha(x-x_0)^2-\beta(y-y_0)^2}$ defined over $\Gamma_1$
  and
   $ g_2 :(x,y)\longmapsto -2\beta(y-y_0)\,\exp^{-\alpha(x-x_0)^2-\beta(y-y_0)^2}$ defined over $\Gamma_2$.
 \end{itemize}

 We aim at solving for $\tilde u^\infty\in H^2(\Omega)$ the nonhomogeneous stationary heat equation
 \begin{equation}
   \label{eq:heat_nh_2d}
     -(\partial_x^2+\partial_y^2) \tilde u^\infty = f, \qquad (x,y)\in\Omega,
 \end{equation}
 with the nonhomogeneous Neumann boundary conditions
 \begin{equation}
   \label{eq:heat_nh_2d_bc}
   \begin{cases}
     \partial_x \tilde u^\infty & = g_1,\qquad (x,y)\in\Gamma_1,\\
     \partial_y \tilde u^\infty & = g_2,\qquad (x,y)\in\Gamma_2.
   \end{cases}
 \end{equation}
 The conditions \eqref{eq:heat_nh_2d}-\eqref{eq:heat_nh_2d_bc} define $\tilde u^\infty$ in
 $H^2(\Omega)$ up to a constant. As in the one-dimensional case
 (see Section \ref{sec:computationsteadystate}), this problem can be alleviated by imposing
 the mean value of $\tilde u^\infty$.
 The source term $f$ and the heat fluxes $g_1,g_2$ are designed so that an exact solution
 of \eqref{eq:heat_nh_2d}-\eqref{eq:heat_nh_2d_bc} is
 $\tilde u^\infty :(x,y)\longmapsto  \exp^{-\alpha(x-x_0)^2-\beta(y-y_0)^2}$.
 We use a $2d$-version of the scheme \eqref{eq:discrheat_nh}
 to approximate this exact solution as if we did not
 know it, as we did in Section \ref{subsec:1d-inhomog} in the one-dimensional case.
  
 The numerical simulations given below were computed for the parameter $\beta=5$, starting
 from $\tilde \vh^0=0$.
 The Matlab code, that we wrote to obtain the following plots, was written starting from a
resolution of a 2D Laplace equation with a mixed Dirichlet-Neumann boundary
\cite{dirichletneumann2dmatlabcode}.
 We run a first simulation with $\alpha=15$, $x_0=1$ and $y_0=2$, so that
the Neumann condition at the boundary is very small, compared to the numerical errors.
We run another simulation with $\alpha=1$, $x_0=0$ and $y_0=4$,
so that the Neumann condition on the boundary
can no longer be neglected. The numerical results are displayed in Figure
\ref{fig:Heat2DInhomog}.
On the top panels of Figure \ref{fig:Heat2DInhomog}, the effect of the nonzero Neumann boundary
condition seems to be actually negligible : the numerical error seems to be bounded by
$\mathcal O(\dxh^2)$ when $n\dth$ is taken sufficially big.
On the bottom panels of Figure \ref{fig:Heat2DInhomog}, the effect of the nonzero Neumann boundary
condition can no longer be neglected and the scheme behaves as it does in dimension 1 (see Section
\ref{subsec:1d-inhomog}) in accordance with Theorem \ref{th:steadystateuniform} :
the numerical error seems to be bounded by $\mathcal O(\dxh^1)$ when $n\dth$ is taken sufficially big.
Note that, for $n\dth=5$ and $J=32$, the error is still big enough for the maximum of
$\tilde\vh^n$ to be close to $1.5$ (bottom left panel),
while it was close to $1.0$ in the centered case (top left panel).

\begin{figure}
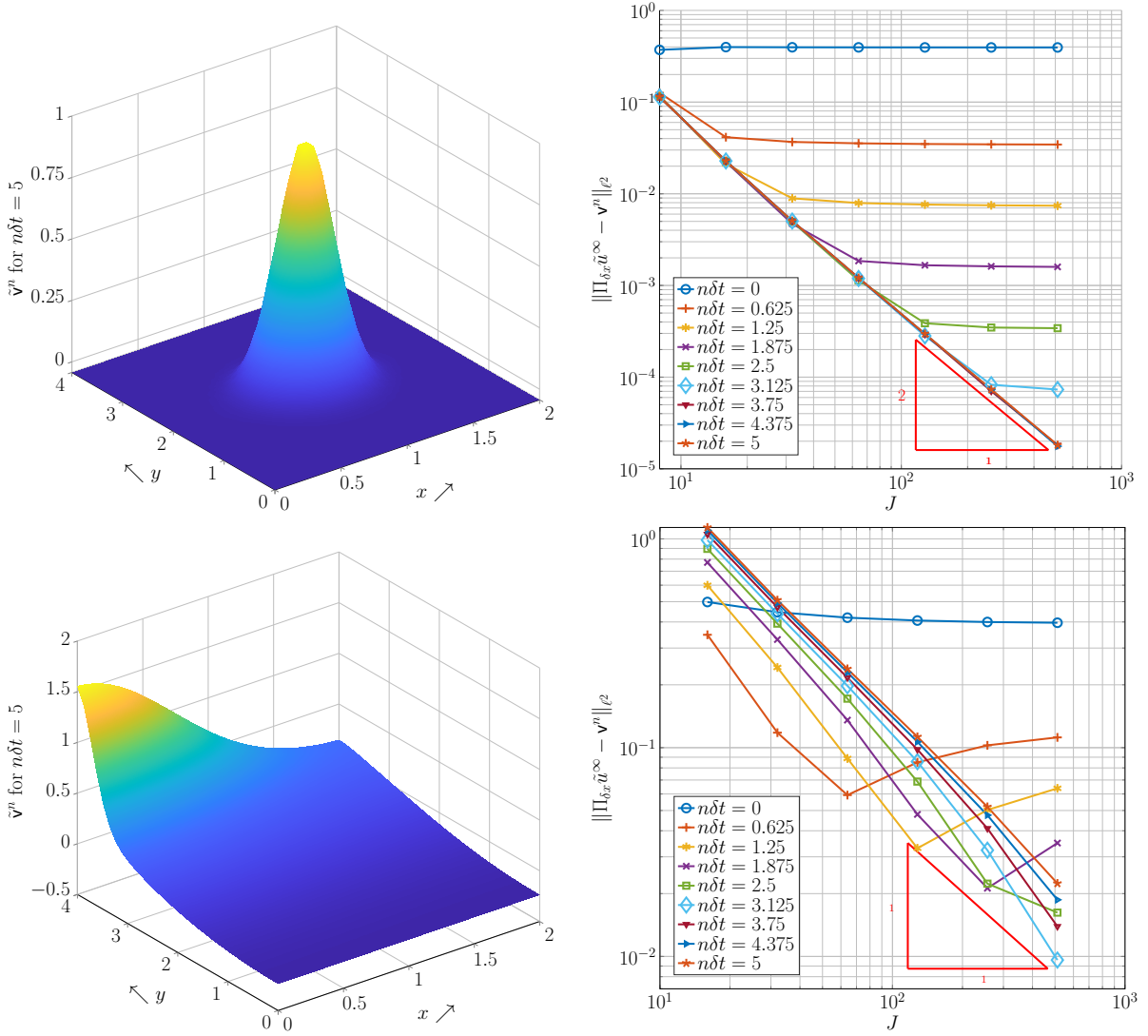

  \centering
  \begin{tabular}{cc}
      \resizebox{0.45 \textwidth}{!}{\input{sol_lapl2plsansbord.tex}}
    &
      \resizebox{0.45 \textwidth}{!}{% This file was created by matlab2tikz.
%
%The latest updates can be retrieved from
%  http://www.mathworks.com/matlabcentral/fileexchange/22022-matlab2tikz-matlab2tikz
%where you can also make suggestions and rate matlab2tikz.
%
\definecolor{mycolor1}{rgb}{0.00000,0.44700,0.74100}%
\definecolor{mycolor2}{rgb}{0.85000,0.32500,0.09800}%
\definecolor{mycolor3}{rgb}{0.92900,0.69400,0.12500}%
\definecolor{mycolor4}{rgb}{0.49400,0.18400,0.55600}%
\definecolor{mycolor5}{rgb}{0.46600,0.67400,0.18800}%
\definecolor{mycolor6}{rgb}{0.30100,0.74500,0.93300}%
\definecolor{mycolor7}{rgb}{0.63500,0.07800,0.18400}%
\begin{tikzpicture}[x=0.75pt,y=0.75pt,font=\huge]

\begin{axis}[%
xmode=log,
xmin=8,
xmax=1000,
xminorticks=true,
xlabel={$J$},
ymode=log,
ymin=1e-05,
ymax=1,
yminorticks=true,
width=6.6in,
height=6.6in, 
at={(1.107in,1.711in)},
scale only axis,
clip=false,
ylabel={$\|\Pi_{\delta x}\tilde{u}^\infty-\vh^n\|_{\ell^2}$},
axis background/.style={fill=white},
grid=both,
xtick={10,100,1000,10000},
axis x line*=bottom,
axis y line*=left,
legend style={legend cell align=left, align=left, draw=white!15!black},
legend pos=south west
]
\addplot [color=mycolor1, line width=2.0pt, mark size=4.5pt, mark=o, mark options={solid, mycolor1}]
  table[row sep=crcr]{%
8	0.37159829937789\\
16	0.398021128393758\\
32	0.39626314564299\\
64	0.395365587826394\\
128	0.39491222936106\\
256	0.394684406228613\\
512	0.394570208805409\\
};
\addlegendentry{$n\delta t=0$}

\addplot [color=mycolor2, line width=2.0pt, mark size=4.5pt, mark=+, mark options={solid, mycolor2}]
  table[row sep=crcr]{%
8	0.126328734433389\\
16	0.0414161690188043\\
32	0.0367631520184187\\
64	0.0355035610816057\\
128	0.0349105070054481\\
256	0.0346082178176621\\
512	0.0344546496783595\\
};
\addlegendentry{$n\delta t=0.625$}

\addplot [color=mycolor3, line width=2.0pt, mark size=4.5pt, mark=asterisk, mark options={solid, mycolor3}]
  table[row sep=crcr]{%
8	0.117749874503306\\
16	0.0211473547165235\\
32	0.00891989774553105\\
64	0.00792497917260604\\
128	0.00763030505975343\\
256	0.00748764936070322\\
512	0.00741372303396144\\
};
\addlegendentry{$n\delta t=1.25$}

\addplot [color=mycolor4, line width=2.0pt, mark size=4.5pt, mark=x, mark options={solid, mycolor4}]
  table[row sep=crcr]{%
8	0.115132773595221\\
16	0.022278130480336\\
32	0.00470565411098319\\
64	0.00185052947822473\\
128	0.00166115265231183\\
256	0.00161612611076647\\
512	0.00159418352280027\\
};
\addlegendentry{$n\delta t=1.875$}

\addplot [color=mycolor5, line width=2.0pt, mark size=3.2pt, mark=square, mark options={solid, mycolor5}]
  table[row sep=crcr]{%
8	0.114284369635891\\
16	0.0226863450947304\\
32	0.00495971691391719\\
64	0.00113688333440029\\
128	0.000387131446584145\\
256	0.000347726296102811\\
512	0.000341900161598757\\
};
\addlegendentry{$n\delta t=2.5$}

\addplot [color=mycolor6, line width=2.0pt, mark size=7.8pt, mark=diamond, mark options={solid, mycolor6}]
  table[row sep=crcr]{%
8	0.114007755734729\\
16	0.0228023867645503\\
32	0.00503823305117881\\
64	0.00119173037110381\\
128	0.000281651343214842\\
256	8.27241101330357e-05\\
512	7.32380724784586e-05\\
};
\addlegendentry{$n\delta t=3.125$}

\addplot [color=mycolor7, line width=2.0pt, mark size=3.0pt, mark=triangle, mark options={solid, rotate=180, mycolor7}]
  table[row sep=crcr]{%
8	0.113917457586349\\
16	0.0228334688828238\\
32	0.00505800010342127\\
64	0.00120704368816795\\
128	0.00029336735126503\\
256	7.0371749629412e-05\\
512	1.81241537427914e-05\\
};
\addlegendentry{$n\delta t=3.75$}

\addplot [color=mycolor1, line width=2.0pt, mark size=3.0pt, mark=triangle, mark options={solid, rotate=270, mycolor1}]
  table[row sep=crcr]{%
8	0.113887969686328\\
16	0.022841665322862\\
32	0.00506274106826255\\
64	0.00121065774188804\\
128	0.000296432265197342\\
256	7.28746972145951e-05\\
512	1.76314985646389e-05\\
};
\addlegendentry{$n\delta t=4.375$}

\addplot [color=mycolor2, line width=2.0pt, mark size=4.5pt, mark=star, mark options={solid, mycolor2}]
  table[row sep=crcr]{%
8	0.113878338927685\\
16	0.0228438179094507\\
32	0.00506386514739086\\
64	0.00121147872665773\\
128	0.000297130871838405\\
256	7.35018493536974e-05\\
512	1.81667069447972e-05\\
};
\addlegendentry{$n\delta t=5$}

\addplot [color=red, line width=2.0pt, forget plot]
  table[row sep=crcr]{%
116.363636363636	2.5646e-04\\
465.454545454545	1.60286350587627e-05\\
};
\addplot [color=red, line width=2.0pt, forget plot]
  table[row sep=crcr]{%
116.363636363636	2.5646e-04\\ %% ligne verticale
116.363636363636	1.60286350587627e-05\\
};
\addplot [color=red, line width=2.0pt, forget plot]
  table[row sep=crcr]{%
116.363636363636	1.60286350587627e-05\\ %% ligne du bas, ne pas toucher
465.454545454545	1.60286350587627e-05\\
};
\node[right, align=left, font=\color{red}]
at (axis cs:93.091,6.4057e-05) {\LARGE 2};
\node[right, align=left, font=\color{red}]
at (axis cs:232.727,1.2330e-05) {\bf 1};

\end{axis}
\end{tikzpicture} 
%%% Local Variables: 
%%% mode: latex
%%% TeX-engine: default
%%% ispell-local-dictionary: "english"
%%% TeX-master: "fifstandalone.tex"
%%% End: }\\
      \resizebox{0.45 \textwidth}{!}{\input{sol_lapl2plavecbord.tex}}
    &
      \resizebox{0.45 \textwidth}{!}{% This file was created by matlab2tikz.
%
%The latest updates can be retrieved from
%  http://www.mathworks.com/matlabcentral/fileexchange/22022-matlab2tikz-matlab2tikz
%where you can also make suggestions and rate matlab2tikz.
%
\definecolor{mycolor1}{rgb}{0.00000,0.44700,0.74100}%
\definecolor{mycolor2}{rgb}{0.85000,0.32500,0.09800}%
\definecolor{mycolor3}{rgb}{0.92900,0.69400,0.12500}%
\definecolor{mycolor4}{rgb}{0.49400,0.18400,0.55600}%
\definecolor{mycolor5}{rgb}{0.46600,0.67400,0.18800}%
\definecolor{mycolor6}{rgb}{0.30100,0.74500,0.93300}%
\definecolor{mycolor7}{rgb}{0.63500,0.07800,0.18400}%
\begin{tikzpicture}[x=0.75pt,y=0.75pt,font=\huge]

\begin{axis}[%
width=6.6in,
height=6.6in, 
at={(1.107in,1.711in)},
scale only axis,
clip=false,
xmode=log,
xmin=10,
xmax=1000,
xminorticks=true,
xlabel={$J$},
ymode=log,
ymin=0.007,
ymax=1.1338900152257,
yminorticks=true,
ylabel={$\|\Pi_{\delta x}\tilde{u}^\infty-\vh^n\|_{\ell^2}$},
axis background/.style={fill=white},
grid=both,
legend style={legend cell align=left, align=left, draw=white!15!black},
legend pos=south west
]
\addplot [color=mycolor1, line width=2.0pt, mark size=4.5pt, mark=o, mark options={solid, mycolor1}]
  table[row sep=crcr]{%
512	0.396551582276413\\
256	0.39981247412407\\
128	0.406333473323069\\
64	0.419378666720548\\
32	0.445528807324398\\
16	0.498398958098463\\
};
\addlegendentry{$n\delta t=0$}

\addplot [color=mycolor2, line width=2.0pt, mark size=4.5pt, mark=+, mark options={solid, mycolor2}]
  table[row sep=crcr]{%
512	0.111993259064665\\
256	0.102602222633025\\
128	0.0850315575010698\\
64	0.0592790083783305\\
32	0.117981680827428\\
16	0.347223651821571\\
};
\addlegendentry{$n\delta t=0.625$}

\addplot [color=mycolor3, line width=2.0pt, mark size=4.5pt, mark=asterisk, mark options={solid, mycolor3}]
  table[row sep=crcr]{%
512	0.0638635493285215\\
256	0.0503041532612307\\
128	0.0329788962878066\\
64	0.0887607396824905\\
32	0.242096250764876\\
16	0.599411616161737\\
};
\addlegendentry{$n\delta t=1.25$}

\addplot [color=mycolor4, line width=2.0pt, mark size=4.5pt, mark=x, mark options={solid, mycolor4}]
  table[row sep=crcr]{%
512	0.0349348493909923\\
256	0.0212248401717862\\
128	0.0478958525500308\\
64	0.13552148407713\\
32	0.329455247763966\\
16	0.770944809971482\\
};
\addlegendentry{$n\delta t=1.875$}

\addplot [color=mycolor5, line width=2.0pt, mark size=3.2pt, mark=square, mark options={solid, mycolor5}]
  table[row sep=crcr]{%
512	0.0162178920915726\\
256	0.0223212460114043\\
128	0.0688040420579387\\
64	0.171679250806816\\
32	0.392741740946763\\
16	0.895749462631003\\
};
\addlegendentry{$n\delta t=2.5$}

\addplot [color=mycolor6, line width=2.0pt, mark size=7.8pt, mark=diamond, mark options={solid, mycolor6}]
  table[row sep=crcr]{%
512	0.00961083234296083\\
256	0.0322777122090838\\
128	0.0857255557552885\\
64	0.197912480141507\\
32	0.438088789430349\\
16	0.986653509724613\\
};
\addlegendentry{$n\delta t=3.125$}

\addplot [color=mycolor7, line width=2.0pt, mark size=3.0pt, mark=triangle, mark options={solid, rotate=180, mycolor7}]
  table[row sep=crcr]{%
512	0.0139255589254814\\
256	0.0410336677882473\\
128	0.0980216972724071\\
64	0.216457079262096\\
32	0.470235465815437\\
16	1.05245932239049\\
};
\addlegendentry{$n\delta t=3.75$}

\addplot [color=mycolor1, line width=2.0pt, mark size=3.0pt, mark=triangle, mark options={solid, rotate=270, mycolor1}]
  table[row sep=crcr]{%
512	0.0186832625351599\\
256	0.0475002851089325\\
128	0.106667121248145\\
64	0.229412179783171\\
32	0.492872784897403\\
16	1.09986006723474\\
};
\addlegendentry{$n\delta t=4.375$}

\addplot [color=mycolor2, line width=2.0pt, mark size=4.5pt, mark=star, mark options={solid, mycolor2}]
  table[row sep=crcr]{%
512	0.0223777486359491\\
256	0.0520581758267126\\
128	0.112667807588811\\
64	0.238408163311374\\
32	0.508750486336932\\
16	1.1338900152257\\
};
\addlegendentry{$n\delta t=5$}

\addplot [color=red, line width=2.0pt, forget plot]
  table[row sep=crcr]{%
116.363636363636	0.0349484812471303\\
465.454545454545	0.00873712031178257\\
};
\addplot [color=red, line width=2.0pt, forget plot]
  table[row sep=crcr]{%
116.363636363636	0.0349484812471303\\
116.363636363636	0.00873712031178257\\
};
\addplot [color=red, line width=2.0pt, forget plot]
  table[row sep=crcr]{%
116.363636363636	0.00873712031178257\\
465.454545454545	0.00873712031178257\\
};
\node[right, align=left, font=\color{red}]
at (axis cs:93.091,0.017) {1};
\node[right, align=left, font=\color{red}]
at (axis cs:232.727,0.0078) {1};
\end{axis}
\end{tikzpicture}%}\\
  \end{tabular}
  \caption{On the left panels, numerical solution $\tilde\vh^n$ obtained at $n\dth=5$ and $J=32$.
    On the right panels, numerical error $\left\|\Pidx \tilde u^\infty-\tilde\vh^n\right\|$
    as a function of $J$ for several values of $n\dth$.
    On the top panels : $(\alpha,x_0,y_0)=(15,1,2)$ so that the derivatives of the solution
    on the boundary are negligible.
    On the bottom panels : $(\alpha,x_0,y_0)=(1,0,4)$ so that the derivatives of the solution
    on the boundary are not negligible.
    %{\color{blue} 20231106 : The bottom panels have to be replaced with the actual numerical results.}
 }
  \label{fig:Heat2DInhomog}
\end{figure} 

 %  \begin{figure}
 %  \centering 
 %  %\input{matlabG/erreur2d.tex} 
 %  \end{figure}
 % % \input{MatlabP2/lap2ddoublexp.tex}

 % {\color{blue} 20231018 : Pauline fait remarquer qu'on devrait dire un mot de l'analogie
 % entre notre m\'ethode et la r\'esolution par transformation de Laplace.}
\section{Appendix}
\label{sec:appendix}

\subsection{Proof of Proposition \ref{prop:sumsumIplusPdelta}}
\label{subsec:proofsumsumIplusPdelta}

\begin{proof}
  Since for all $\ell\in\{1,\cdots,J-1\}$, $\lambda_\ell\neq 0$. For $\dth>0$, this implies
  that $1+\dth\lambda_\ell\neq 1$ for $\ell\in\{1,\cdots,J-1\}$.
  Assuming that $J\geq 2$ and $\dth>0$ satisfy the CFL condition \eqref{eq:CFLheat},
  we have $|1+\dth\lambda_\ell|\leq 1$.
  Therefore, using \eqref{eq:lambdaell}, we may compute
  \begin{eqnarray*}
    \sum_{\ell=1}^{J-1} \left| \dth \sum_{k=0}^{n-1} (1+\dth \lambda_{\ell})^k\right|^2
    & = & \sum_{\ell=1}^{J-1} \left|\dth \frac{1-(1+\dth\lambda_\ell)^n}{1-(1+\dth\lambda_\ell)}\right|^2\\
                                                                                                         & \leq & \sum_{\ell=1}^{J-1} \left| \frac{2\dth}{-\dth\lambda_\ell}\right|^2\\
                                                                                                         & \leq & 4 \sum_{\ell=1}^{J-1} \frac{1}{\lambda_\ell^2}\\
    & \leq & 4 \sum_{\ell=1}^{J-1} \frac{\dxh^4}{16 \sin\left(\frac{\ell \pi}{2J}\right)^4}.
  \end{eqnarray*}
  Using the concavity of the sine function, one has classically that
  for $s\in (0,\pi/2)$, $2s/\pi\leq\sin(s)$.
  This implies
  \begin{eqnarray*}
    \sum_{\ell=1}^{J-1} \left| \dth \sum_{k=0}^{n-1} (1+\dth \lambda_{\ell})^k\right|^2
    & \leq & \frac14 \dxh^4 \sum_{\ell=1}^{J-1} \frac{\pi^4}{2^4 \left(\frac{\ell \pi}{2J}\right)^4}\\
    & \leq & \frac14 \dxh^4 J^4 \sum_{\ell=1}^{J-1} \frac{1}{\ell^4}.
  \end{eqnarray*}
  Since $\dxh J=L\times(J/(J-1))\leq 2L$ and the sum in the right-hand side above is bounded
  by $\zeta(4)=\pi^4/90$, the proof is complete.
\end{proof}

\begin{remark}
  For the conclusion of Proposition \ref{prop:sumsumIplusPdelta} to hold,
  the proof above indicates that it is sufficient that $\sum_{\ell=1}^{J-1} 1/\lambda_\ell^2$ can
  be bounded independently of $J\geq 2$.
  This hypothesis, in which $\dth$ plays no role,
  is related to a spectral gap condition on $\Pd$ and how well the eigenvalues of $\Pd$ approximate
  the spectrum of $P$. Indeed, the nonzero eigenvalues of $P$ are $-(p\pi/L)^2$ for $p\geq 1$
  and the series $(1/(-(p\pi/L)^2)^2)_{p\geq 1}$ is also convergent.
\end{remark}

  % {\color{blue} + Dire que la somme $\sum_{\ell=1}^{J-1}1/\lambda_\ell^2$ doit \^etre born\'ee
  %   ind\'ependamment de $J$ \`a $L>0$ fix\'e pour avoir Proposition \ref{prop:sumsumIplusPdelta},
  %   et dire que c'est aussi une hypoth\`ese de trou spectral.}

\subsection{Two lemmas of numerical quadrature}

\begin{lemma}[Numerical integration over the spectrum of $\Pd$]
  \label{lem:teknikexpsin}
  Let $L>0$ be fixed. There exists a constant $C>0$ such that for all $J\geq 2$,
  $m\in\N\setminus\{0\}$, and all $\alpha>0$,    
  \begin{equation}
    \label{eq:teknikexpsin}
    \dxh \sum_{\ell=1}^{J-1} {\rm e}^{-\alpha m \sin^2\left(\frac{\ell\pi}{J}\right)}
    \leq C \frac{1}{\sqrt{m\alpha}}. 
  \end{equation}
\end{lemma}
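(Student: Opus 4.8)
The plan is to reduce the weighted sum to a Gaussian-type sum, compare it with a Gaussian integral, and then absorb the discretisation weight.

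First I would symmetrise. Since $\ell\mapsto J-\ell$ leaves $\sin^2(\ell\pi/J)$ invariant, one has
\[
\sum_{\ell=1}^{J-1} {\rm e}^{-\alpha m \sin^2\left(\frac{\ell\pi}{J}\right)}
\leq 2\sum_{\ell=1}^{\lfloor J/2\rfloor} {\rm e}^{-\alpha m \sin^2\left(\frac{\ell\pi}{J}\right)},
\]
the middle index (present only when $J$ is even) being counted once on the left and at least once on the right, so nothing is lost. For $1\leq \ell\leq \lfloor J/2\rfloor$ we have $\ell\pi/J\in(0,\pi/2]$, so by concavity of the sine on $[0,\pi/2]$ together with $\sin(\pi/2)=1$ we get the classical chord bound $\sin(\ell\pi/J)\geq 2\ell/J$, hence $\sin^2(\ell\pi/J)\geq 4\ell^2/J^2$.

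Next I would pass from the Gaussian sum to a Gaussian integral. Since $x\mapsto {\rm e}^{-4\alpha m x^2/J^2}$ is decreasing on $[0,\infty)$,
\[
\sum_{\ell=1}^{\lfloor J/2\rfloor} {\rm e}^{-4\alpha m \ell^2/J^2}
\leq \sum_{\ell=1}^{+\infty} {\rm e}^{-4\alpha m \ell^2/J^2}
\leq \int_0^{+\infty} {\rm e}^{-4\alpha m x^2/J^2}\,{\rm d}x
= \frac{J\sqrt{\pi}}{4\sqrt{\alpha m}},
\]
where the last equality comes from the substitution $u=2x\sqrt{\alpha m}/J$ and $\int_0^{+\infty}{\rm e}^{-u^2}\,{\rm d}u=\sqrt{\pi}/2$. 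Combining the two displays gives $\sum_{\ell=1}^{J-1} {\rm e}^{-\alpha m \sin^2(\ell\pi/J)} \leq J\sqrt{\pi}/(2\sqrt{\alpha m})$.

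Finally I would multiply by $\dxh=L/(J-1)$ and use $J/(J-1)\leq 2$ for $J\geq 2$, obtaining
\[
\dxh \sum_{\ell=1}^{J-1} {\rm e}^{-\alpha m \sin^2\left(\frac{\ell\pi}{J}\right)}
\leq \frac{L}{J-1}\cdot\frac{J\sqrt{\pi}}{2\sqrt{\alpha m}}
\leq \frac{L\sqrt{\pi}}{\sqrt{m\alpha}},
\]
so \eqref{eq:teknikexpsin} holds with $C=L\sqrt{\pi}$, a constant depending only on $L$ (and not on $J$, $m$ or $\alpha$). There is no deep obstacle here; the only points requiring a little care are the even/odd-$J$ bookkeeping in the symmetrisation step, so that no term is dropped, and the mismatch between the weight $\dxh=L/(J-1)$ and the $J$-periodicity of $\sin^2(\ell\pi/J)$ — but this discrepancy is harmless since it is controlled by the bounded factor $J/(J-1)\leq 2$.
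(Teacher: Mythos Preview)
Your proof is correct and follows essentially the same route as the paper's: symmetrise using $\sin(\ell\pi/J)=\sin((J-\ell)\pi/J)$, apply the chord bound $\sin(\ell\pi/J)\geq 2\ell/J$ on $(0,\pi/2]$, compare the resulting Gaussian sum with $\int_0^\infty {\rm e}^{-u^2}\,{\rm d}u$, and absorb $\dxh=L/(J-1)$ via $J/(J-1)\leq 2$. Your unified treatment of the two parities via $\lfloor J/2\rfloor$ is slightly more streamlined than the paper's case split, but the argument is the same.
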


\begin{proof}
  Let us first observe that
  \begin{equation}
    \label{eq:simsin}
    \forall \ell\in\{1,\cdots,J-1\},\qquad \sin\left(\frac{\ell \pi}{J}\right) = \sin\left(\frac{(J-\ell)\pi}{J}\right).
  \end{equation}
  Hence, if $J-1$ is even, then
  \begin{equation}
    \label{eq:sumJm1pair}
    \dxh \sum_{\ell=1}^{J-1} {\rm e}^{-\alpha m \sin^2\left(\frac{\ell\pi}{J}\right)} =
    2\dxh \sum_{\ell=1}^{\frac{J-1}{2}} {\rm e}^{-\alpha m \sin^2\left(\frac{\ell\pi}{J}\right)},
  \end{equation}
  and, if $J-1$ is odd, then
  \begin{equation}
    \label{eq:sumJm1impair}
    \dxh \sum_{\ell=1}^{J-1} {\rm e}^{-\alpha m \sin^2\left(\frac{\ell\pi}{J}\right)} =
    2\dxh \sum_{\ell=1}^{\frac{J}{2}-1} {\rm e}^{-\alpha m \sin^2\left(\frac{\ell\pi}{J}\right)}
    +\dxh {\rm e}^{-\alpha m \sin^2\left(\frac{\frac{J}{2}\pi}{J}\right)}.
  \end{equation}
  Assume that $J-1$ is even. In this case, for $\ell\in\{1,\cdots,(J-1)/2\}$, we have
  $\ell\pi/J\in(0,\pi/2)$, and hence
  \begin{equation}
    \label{eq:concavesin}
    \frac{2}{\pi} \frac{\ell\pi}{J} \leq \sin\left(\frac{\ell\pi}{J}\right).
  \end{equation}
  This implies, using \eqref{eq:sumJm1pair},
  \begin{eqnarray*}
    \dxh \sum_{\ell=1}^{J-1} {\rm e}^{-\alpha m \sin^2\left(\frac{\ell\pi}{J}\right)}
    & \leq &
             2 \frac{L}{J-1} \sum_{\ell=1}^{\frac{J-1}{2}}
             {\rm e}^{-\alpha m \left(\frac{2\ell}{J}\right)^2}\\
     & \leq &
              L\frac{J}{J-1} \sum_{\ell=1}^{\frac{J-1}{2}}
              \frac{2}{J} {\rm e}^{-\alpha m \left(\frac{2\ell}{J}\right)^2}\\
    & \leq &
              L\frac{J}{J-1} \sum_{\ell=1}^{\frac{J-1}{2}}
             \int_{(\ell-1)/J}^{{\ell/J}}{\rm e}^{-\alpha m x^2}\dd x\\
     & \leq &
              L\frac{J}{J-1} \int_{0}^{\frac{1}{2}}{\rm e}^{-\alpha m x^2}\dd x\\
    & \leq & \frac{L}{\sqrt{\alpha m}}\frac{J}{J-1} \int_{0}^{\frac{\sqrt{\alpha m}}{4}}{\rm e}^{-u^2}\dd u\\
    & \leq & \frac{C}{\sqrt{\alpha m}},
  \end{eqnarray*}
  with $C=2L\int_0^{+\infty}{\rm e}^{-u^2}\dd u$.
  
  Assume that $J-1$ is odd. In this case for all $\ell\in\{1,\cdots,J/2\}$,
  we have $\ell\pi/J\in(0,\pi/2]$. Hence, \eqref{eq:concavesin} is valid for such $\ell$.
  Using \eqref{eq:sumJm1impair}, we infer
    \begin{eqnarray*}
    \dxh \sum_{\ell=1}^{J-1} {\rm e}^{-\alpha m \sin^2\left(\frac{\ell\pi}{J}\right)}
    & \leq &
             2 \frac{L}{J-1} \sum_{\ell=1}^{\frac{J}{2}-1}
             {\rm e}^{-\alpha m \left(\frac{2\ell}{J}\right)^2}
      + 2 \frac{L}{J-1} {\rm e}^{-\alpha m}\\
     & \leq &
              L\frac{J}{J-1} \sum_{\ell=1}^{\frac{J}{2}}
              \frac{2}{J} {\rm e}^{-\alpha m \left(\frac{2\ell}{J}\right)^2}\\
    & \leq &
              L\frac{J}{J-1} \sum_{\ell=1}^{\frac{J}{2}}
             \int_{(\ell-1)/J}^{{\ell/J}}{\rm e}^{-\alpha m x^2}\dd x\\
     & \leq &
              L\frac{J}{J-1} \int_{0}^{\frac{1}{2}}{\rm e}^{-\alpha m x^2}\dd x\\
    & \leq & \frac{L}{\sqrt{\alpha m}}\frac{J}{J-1} \int_{0}^{\frac{\sqrt{\alpha m}}{4}}{\rm e}^{-u^2}\dd u\\
    & \leq & \frac{C}{\sqrt{\alpha m}},
    \end{eqnarray*}
    with the same $C$ as above.
  \end{proof}

  \begin{remark}
    Note that the bound \eqref{eq:teknikexpsin} allows to carry on the computations of order
    since it provides (for $\alpha=\dth/\dxh^2$) a
    {\it uniform} bound on the sum in the left-hand side that
    \begin{itemize}
      \item tends to $0$ as $m$ tends to $+\infty$
      \item does {\it not} depend on $\dth$ except via the CFL number $\alpha$.
    \end{itemize}
    In some sense, it is {\it uniform} in $\dth,\dxh$ in the CFL region (and still tends to $0$
    as $m$ tends do $+\infty$).
    In contrast, using just a spectral gap in the eigenvalues of $\Pd$ would lead to a bound
    of the form
    \begin{eqnarray*}
      \dxh \sum_{\ell=1}^{J-1} |1+\dth\lambda_\ell|^m
      & \leq & \dxh \sum_{\ell=1}^{J-1} \left(1-\frac{\pi^2}{L^2}\dth\right)^m\\
      & \leq & L \left(1-\frac{\pi^2}{L^2}\dth\right)^m\\
      & \leq & L {\rm e}^{-\frac{\pi^2}{L^2}m\dth},
    \end{eqnarray*}
    which, when we take the supremum in $\dth, \dxh$ in the CFL region yields $L\times 1$
    and no longer tends to $0$ when $m$ tends to $\infty$.
  \end{remark}

\begin{lemma}
  \label{lem:teknikConvolExpmoins}
  Let $L>0$ be fixed. There exists $C>0$ such that for all $\dth\in(0,1)$, $p\geq 1$, $n\geq 1$,
  \begin{equation*}
    \dth^2 \sum_{\stackrel{(k_1,k_2)\in\{0,\cdots,n-1\}^2}{2n-2-k_1-k_2\geq 1}} {\rm e}^{-p^2\frac{\pi^2}{L^2}(k_1+k_2)\dth} \frac{1}{\sqrt{(2n-2-k_1-k_2)\dth}}\leq C.
  \end{equation*}
\end{lemma}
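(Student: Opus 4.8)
The plan is to turn the double sum into a one–dimensional sum by grouping the pairs $(k_1,k_2)$ having a fixed value of $k_1+k_2$, and then to control that sum by splitting it at its midpoint and estimating the two halves by two different elementary devices. First I dispose of the trivial case $n=1$ (for which the index set is empty) and set $\mu:=p^{2}\pi^{2}/L^{2}\ge\pi^{2}/L^{2}=:a>0$, so that every bound below depends only on $a$, hence only on $L$. The substitution $l_i=n-1-k_i$, a bijection of $\{0,\dots,n-1\}$ onto itself, sends $2n-2-k_1-k_2$ to $l_1+l_2$ and $k_1+k_2$ to $2n-2-l_1-l_2$, so the sum becomes $\dth^{2}\sum e^{-\mu(2n-2-l_1-l_2)\dth}\big((l_1+l_2)\dth\big)^{-1/2}$ over $(l_1,l_2)\in\{0,\dots,n-1\}^{2}$ with $l_1+l_2\ge1$. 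Grouping by $s=l_1+l_2$ and using that the number of such pairs is $s+1$ for $1\le s\le n-1$ and $2n-1-s$ for $n\le s\le2n-2$, I write it as $S_{\mathrm I}+S_{\mathrm{II}}$ with $S_{\mathrm I}=\dth^{2}\sum_{s=1}^{n-1}(s+1)e^{-\mu(2n-2-s)\dth}(s\dth)^{-1/2}$ and $S_{\mathrm{II}}=\dth^{2}\sum_{s=n}^{2n-2}(2n-1-s)e^{-\mu(2n-2-s)\dth}(s\dth)^{-1/2}$.

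For $S_{\mathrm I}$, the condition $s\le n-1$ forces $2n-2-s\ge n-1$, so $e^{-\mu(2n-2-s)\dth}\le e^{-\mu(n-1)\dth}$; combined with $(s+1)/\sqrt s\le2\sqrt s$ and $\sum_{s\le n-1}\sqrt s\le\tfrac23n^{3/2}$ this yields $S_{\mathrm I}\le\tfrac43(n\dth)^{3/2}e^{-\mu(n-1)\dth}\le\tfrac43(n\dth)^{3/2}e^{-\mu n\dth/2}$ for $n\ge2$, which is bounded by $\tfrac43\sup_{y\ge0}y^{3/2}e^{-\mu y/2}=\tfrac43(3/\mu)^{3/2}e^{-3/2}\le\tfrac43(3/a)^{3/2}e^{-3/2}$. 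For $S_{\mathrm{II}}$ I reindex with $r=2n-2-s\in\{0,\dots,n-2\}$, so $2n-1-s=r+1$ and $s=2n-2-r\ge n$, whence $(s\dth)^{-1/2}\le(n\dth)^{-1/2}$; then $1/\sqrt n\le1/\sqrt{r+1}$ collapses $(r+1)/\sqrt n\le\sqrt{r+1}$, leaving $S_{\mathrm{II}}\le\dth^{3/2}\sum_{r\ge0}\sqrt{r+1}\,e^{-\mu r\dth}$. Here I distinguish $\mu\dth\ge1$ and $\mu\dth<1$: in the first case $e^{-\mu r\dth}\le e^{-r}$ makes the series a fixed finite constant, multiplied by $\dth^{3/2}<1$; in the second case I write $\dth^{3/2}\sqrt{r+1}\,e^{-\mu r\dth}=e^{\mu\dth}\,\dth\,\sqrt{(r+1)\dth}\,e^{-\mu(r+1)\dth}$ and, since $t\mapsto\sqrt t\,e^{-\mu t}$ is unimodal, use the standard comparison $\dth\sum_{k\ge1}\sqrt{k\dth}\,e^{-\mu k\dth}\le\int_0^{\infty}\sqrt t\,e^{-\mu t}\,\dd t+\dth\,e^{-1/2}(2\mu)^{-1/2}=\tfrac{\sqrt\pi}{2}\mu^{-3/2}+\dth\,e^{-1/2}(2\mu)^{-1/2}$; together with $e^{\mu\dth}<e$, $\dth<1$ and $\mu\ge a$, this is again bounded in terms of $a$. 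Adding the two bounds completes the proof.

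I expect the main obstacle to be precisely this calibration. A naive estimate, bounding $(2n-2-k_1-k_2)^{-1/2}$ by $((n-1)\dth)^{-1/2}$, or the exponential by $1$, fails because both $n\dth$ and $\mu\dth$ are unbounded, and the quantity is finite only once the exponential weight is allowed to compensate simultaneously the diverging $\sqrt s$ mass and the small-denominator blow-up; choosing the split so that each piece reduces either to the one–variable extremal problem for $y\mapsto y^{3/2}e^{-\mu y/2}$ or to a Riemann sum for $\int_0^\infty\sqrt t\,e^{-\mu t}\,\dd t$ (hence the case distinction $\mu\dth\gtrless1$) is the crux. Finally, since the lemma is also invoked for a companion estimate in the proof of Proposition~\ref{prop:estimfinaleL1}, I would record there the elementary bound $\dth\sum_{k=0}^{n-1}e^{-\mu k\dth}\le\dth/(1-e^{-\mu\dth})$, which is at most a constant depending on $L$ by $1-e^{-t}\ge(1-e^{-1})\min(t,1)$ together with $\dth<1$ and $\mu\ge a$.
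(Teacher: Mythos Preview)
Your proof is correct and takes a genuinely different route from the paper's. The paper splits the index set $\mathcal{E}_n$ according to whether $(2n-2-k_1-k_2)\dth\geq 1$ or $<1$: on the first piece the square-root denominator is at least $1$, and a product of geometric series bounds the sum by $\big(\dth/(1-e^{-\pi^2\dth/L^2})\big)^2$; on the second piece $(k_1+k_2)\dth>(2n-2)\dth-1$ forces the exponential to be uniformly small, and a two-dimensional integral comparison over a triangle $\mathcal{T}_n(\dth)$ handles the singular factor, yielding a bound of the form $e^{\pi^2/L^2}\,((2n-2)\dth)^{3/2}e^{-\pi^2(2n-2)\dth/L^2}$.

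By contrast, you first substitute $l_i=n-1-k_i$, group by $s=l_1+l_2$, and split at the midpoint $s=n$. This collapses everything to one-dimensional sums: your $S_{\mathrm I}$ lands directly on the extremal problem for $y\mapsto y^{3/2}e^{-\mu y/2}$, while your $S_{\mathrm{II}}$ reduces (after the crude bound $(r+1)/\sqrt n\le\sqrt{r+1}$) to a Riemann sum for $\int_0^\infty\sqrt t\,e^{-\mu t}\,\dd t$, with the case distinction $\mu\dth\gtrless1$ handling the discrete-to-continuous passage. Your approach avoids the two-dimensional integration over triangles and is a bit more elementary; the paper's threshold split is perhaps more immediately suggested by the structure of the summand. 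Both arguments ultimately rest on the same mechanism --- letting the exponential absorb the $3/2$-power growth --- but organize the bookkeeping differently. Your closing remark on the companion bound $\dth\sum_k e^{-\mu k\dth}\le\dth/(1-e^{-\mu\dth})$ matches exactly what the paper uses for the second term in \eqref{eq:resteL1}.
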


\begin{proof}
  Because of the monotonicity and positivity of the terms in the sum, it is sufficient to prove
  the result for $p=1$. Hence, we assume $p=1$.
  The sum is empty if $n=1$, so any positive $C$ will work. We assume $n\geq 2$.
  Let us denote by
  \begin{equation*}
    \mathcal E_n=\{(k_1,k_2)\in\{0,\cdots,n-1\}^2\quad |\quad 0\leq k_1+k_2\leq 2n-3\},
  \end{equation*}
  which we split into
  \begin{equation*}
    \mathcal E^1_n(\dth)=\{(k_1,k_2)\in \mathcal E\quad |\quad (2n-2-k_1-k_2)\dth\geq 1\},
  \end{equation*}
  and
  \begin{equation*}
    \mathcal E^2_n(\dth)=\{(k_1,k_2)\in \mathcal E\quad |\quad (2n-2-k_1-k_2)\dth< 1\}.
  \end{equation*}
  Setting
  \begin{equation*}
    \mathcal T_n(\dth)=\{(s,t)\in\R^2\quad |\quad s> 0,\ t> 0,\ s+t < 2(n-1)\dth\},
  \end{equation*}
  and $f_{(n-1)\dth}(s,t)={\rm e}^{-\frac{\pi^2}{L^2}(s+t)}\frac{1}{\sqrt{2(n-1)\dth-(s+t)}}$,
  we can carry estimates as follows:
  \begin{eqnarray}\nonumber
    \lefteqn{\dth^2\sum_{(k_1,k_2)\in\mathcal E_n}f_{(n-1)\dth}(k_1\dth,k_2\dth)}\\ \nonumber
    & \leq & \dth^2\sum_{(k_1,k_2)\in\mathcal E^1_n(\dth)}f_{(n-1)\dth}(k_1\dth,k_2\dth)
             + \dth^2\sum_{(k_1,k_2)\in\mathcal E^2_n(\dth)}f_{(n-1)\dth}(k_1\dth,k_2\dth)\\ \label{eq:lem11}
     & \leq & \dth^2\sum_{(k_1,k_2)\in\mathcal E^1_n(\dth)}{\rm e}^{-\frac{\pi^2}{L^2}(k_1+k_2)\dth}
              + \dth^2\sum_{(k_1,k_2)\in\mathcal E^2_n(\dth)}f_{(n-1)\dth}(k_1\dth,k_2\dth).
    %\\ 
    %& \leq & C + \dth^2\sum_{(k_1,k_2)\in\mathcal E^2_n(\dth)}f_{(n-1)\dth}(k_1\dth,k_2\dth),
  \end{eqnarray}
  For the first term in the right-hand side of \eqref{eq:lem11}, we have
  \begin{eqnarray*}
    \dth^2\sum_{(k_1,k_2)\in\mathcal E^1_n(\dth)}{\rm e}^{-\frac{\pi^2}{L^2}(k_1+k_2)\dth}
    & \leq & \dth^2\sum_{k_1=0}^{n-1}\sum_{k_2=0}^{n-1} {\rm e}^{-\frac{\pi^2}{L^2}(k_1+k_2)\dth}\\
    & \leq & \left(\dth \sum_{k_1=0}^{n-1} {\rm e}^{-\frac{\pi^2}{L^2}k_1\dth} \right)
             \left(\dth \sum_{k_2=0}^{n-1} {\rm e}^{-\frac{\pi^2}{L^2}k_2\dth} \right)\\
    & \leq & \left(\frac{\dth}{1-{\rm e}^{-\frac{\pi^2}{L^2}\dth}}\right)^2.
  \end{eqnarray*}
  This last term does not depend on $n\geq 2$ and is bounded independently of $\dth\in(0,1)$.
  For the second term in \eqref{eq:lem11},
  observe that, for $(k_1,k_2)\in\mathcal E^2_n(\dth)$, we have $(2n-2)\dth-1<(k_1+k_2)\dth$,
  and hence
  \begin{equation*}
    0\leq f_{(n-1)\dth}(k_1\dth,k_2\dth) \leq {\rm e}^{-\frac{\pi^2}{L^2}\left((2n-2)\dth-1\right)}
    \frac{1}{\sqrt{(2n-2)\dth-k_1\dth-k_2\dth}}.
  \end{equation*}
  Since we also have, using the monotonicity of the function $t\mapsto 1/\sqrt{(2n-2)\dth-t}$
  over $(-\infty,(2n-2)\dth)$,
  \begin{equation*}
    \frac{\dth^2}{2} \frac{1}{\sqrt{(2n-2)\dth-k_1\dth-k_2\dth}} \leq
    \int_{(k_1\dth,k_2\dth)+\mathcal T_{3/2}(\dth)} \frac{1}{\sqrt{(2n-2)\dth-s-t}} \dd t \dd s,
  \end{equation*}
  we may write
  \begin{eqnarray*}
      \lefteqn{\dth^2\sum_{(k_1,k_2)\in\mathcal E^2_n(\dth)}f_{(n-1)\dth}(k_1\dth,k_2\dth)}\\
    & \leq & 2\,{\rm e}^{\frac{\pi^2}{L^2}-\frac{\pi^2}{L^2}(2n-2)\dth}\sum_{(k_1,k_2)\in\mathcal E^2_n(\dth)}
             \int_{(k_1\dth,k_2\dth)+\mathcal T_{3/2}(\dth)} \frac{1}{\sqrt{(2n-2)\dth-s-t}} \dd t \dd s\\
    & \leq &  2\,{\rm e}^{\frac{\pi^2}{L^2}-\frac{\pi^2}{L^2}(2n-2)\dth}\sum_{(k_1,k_2)\in\mathcal E}
             \int_{(k_1\dth,k_2\dth)+\mathcal T_{3/2}(\dth)} \frac{1}{\sqrt{(2n-2)\dth-s-t}} \dd t \dd s\\
    & \leq &  2\,{\rm e}^{\frac{\pi^2}{L^2}-\frac{\pi^2}{L^2}(2n-2)\dth}
             \int_{\mathcal T_n(\dth)} \frac{1}{\sqrt{(2n-2)\dth-(s+t)}} \dd t \dd s\\
    & \leq &  2\,{\rm e}^{\frac{\pi^2}{L^2}-\frac{\pi^2}{L^2}(2n-2)\dth}
             \int_0^{2(n-1)\dth} \int_\vh^{2(n-1)\dth}\frac{1}{\sqrt{(2n-2)\dth-u}} \dd u \dd v\\
    & \leq &  2\,{\rm e}^{\frac{\pi^2}{L^2}-\frac{\pi^2}{L^2}(2n-2)\dth}
             \int_0^{2(n-1)\dth} 2\sqrt{(2n-2)\dth-v} \dd v\\
    & \leq &  4\,{\rm e}^{\frac{\pi^2}{L^2}-\frac{\pi^2}{L^2}(2n-2)\dth}
             \frac23 \left((2n-2)\dth\right)^{3/2}\\
    & \leq &  \frac83\,{\rm e}^{\frac{\pi^2}{L^2}-\frac{\pi^2}{L^2}(2n-2)\dth}
              \left((2n-2)\dth\right)^{3/2}.
  \end{eqnarray*}
  This last term is bounded independently of $n\geq 2$ and $\dth>0$ by
  $C=(8{\rm e}^{\frac{\pi^2}{L^2}}/3)\times \sup_{x\in(0,+\infty)}x^{3/2}{\rm e}^{-\frac{\pi^2}{L^2}x}$.
  This concludes the proof of the lemma.
\end{proof}

\begin{lemma}
  \label{lem:quadrature}
  Let $L>0$ be fixed. There exists a constant $C>0$ such that for all $J\geq 2$
  and all $v\in H^1(0,L)$,
  \begin{equation*}
    \|\Pidx v\|_{\ell^2}^2 \leq C \|v\|_{H^1}^2.
  \end{equation*}
\end{lemma}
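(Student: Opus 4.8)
The plan is to reduce the nodal sampling bound to a finite sum of elementary one–dimensional trace inequalities, one per mesh cell. First, since we work in dimension one, $H^1(0,L)\hookrightarrow C([0,L])$ continuously, so the nodal values $v(x_j)$ defining $\Pidx v$ make sense for every $v\in H^1(0,L)$; I will systematically use the (absolutely) continuous representative of $v$.

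The workhorse is the following local estimate. For any subinterval $[a,b]\subset[0,L]$ and any $c\in[a,b]$, the identity $v(c)=v(t)+\int_t^c v'(s)\,\dd s$, valid for all $t\in[a,b]$, together with the Cauchy--Schwarz inequality, gives $|v(c)|^2\leq 2|v(t)|^2+2(b-a)\int_a^b|v'|^2\,\dd s$; averaging in $t$ over $[a,b]$ yields
\[
  |v(c)|^2\leq\frac{2}{b-a}\int_a^b|v|^2\,\dd t+2(b-a)\int_a^b|v'|^2\,\dd s.
\]
I would apply this with $[a,b]=[x_j,x_{j+1}]$ and $c=x_j$ for $0\leq j\leq J-2$, and with $[a,b]=[x_{J-2},x_{J-1}]$, $c=x_{J-1}$ for the last node; then every cell is used at most twice, so summing over $j$ and using $b-a=\dxh$,
\[
  \sum_{j=0}^{J-1}|v(x_j)|^2\leq\frac{4}{\dxh}\int_0^L|v|^2\,\dd x+4\dxh\int_0^L|v'|^2\,\dd x.
\]

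It then remains to divide by $J$ and absorb the mesh-dependent constants. Using $\dxh=L/(J-1)$ we have $J\dxh=LJ/(J-1)\geq L$, hence $1/(J\dxh)\leq 1/L$, and $\dxh/J=L/(J(J-1))\leq L/2$ since $J\geq 2$; therefore
\[
  \|\Pidx v\|_{\ell^2}^2=\frac1J\sum_{j=0}^{J-1}|v(x_j)|^2\leq\frac{4}{L}\int_0^L|v|^2\,\dd x+2L\int_0^L|v'|^2\,\dd x\leq C\,\|v\|_{H^1}^2,
\]
the last inequality because $\int_0^L(|v|^2+|\partial_x v|^2)\,\dd x$ is controlled by $\|v\|_{H^1}^2$ up to a constant depending only on $L$ (in the normalisation used throughout the paper), which absorbs the factors $4/L$ and $2L$.

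No step presents a real difficulty; the only mild care needed is the bookkeeping at the right endpoint $x_{J-1}$, which is the left end of no cell, and the tracking of the $1/L$ factor in the $L^2$ inner product when passing to the final constant. I note in passing that one could instead use the cruder bound $\|\Pidx v\|_{\ell^2}^2\leq\max_j|v(x_j)|^2\leq\|v\|_{L^\infty(0,L)}^2\leq C(L)\|v\|_{H^1}^2$ via the Sobolev embedding $H^1(0,L)\hookrightarrow C([0,L])$; I prefer the cell-wise argument above because it yields an explicit constant and is manifestly uniform in $J$.
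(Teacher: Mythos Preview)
Your proof is correct and follows essentially the same strategy as the paper: a cell-wise trace inequality based on the fundamental theorem of calculus, summed over the mesh with separate bookkeeping for the last node $x_{J-1}$. The only cosmetic difference is in how the local estimate is obtained: the paper writes $v^2(x)=v^2(x_j)+\int_{x_j}^x(v^2)'$, integrates over the cell, and bounds the remainder via $2|v\,\partial_x v|\leq v^2+(\partial_x v)^2$, whereas you write $v(c)=v(t)+\int_t^c v'$, square with Cauchy--Schwarz, and average in $t$; both routes produce a bound of the form $|v(x_j)|^2\leq C_1\dxh^{-1}\int_{\text{cell}}v^2+C_2\,\dxh\int_{\text{cell}}(v')^2$ and the rest of the argument is identical.
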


\begin{proof}
  Let $v\in H^1(0,L)$ be fixed. For $j\in\{0,\cdots,J-2\}$, and $x\in [x_j,x_{j+1}]$,
  \begin{equation}
    \label{eq:rectgauche}
    v^2(x)=v^2(x_j)+\int_{x_j}^x \partial_x v^2(s)\dd s.
  \end{equation}
  Integrating over $[x_j,x_{j+1}]$, we obtain
  \begin{equation*}
    \int_{x_j}^{x_{j+1}} v^2(x) \dd x = \dxh v^2(x_j) + 2 \int_{x_j}^{x_{j+1} } \int_{x_j}^x v(s)\partial_x v(s)\dd s \dd x.
  \end{equation*}
  In particular, for all $j\in\{0,\cdots,J-2\}$,
  \begin{eqnarray}
    \nonumber
    \left| \int_{x_j}^{x_{j+1}} v^2(x) \dd x -  \dxh v^2(x_j) \right| & \leq &
    \int_{x_j}^{x_{j+1}} \int_{x_j}^{x_{j+1}} \left(v^2(s) + (\partial_x v)^2(s)\right)\dd s \dd x\\ \label{eq:tekniko}
    & \leq & \dxh \int_{x_j}^{x_{j+1}} \left(v^2(s) + (\partial_x v)^2(s)\right)\dd s.
  \end{eqnarray}
    Moreover, we have, for $x\in [x_{J-2},x_{J-1}]$,
    \begin{equation*}
      v^2(x) = v^2(x_{J-1})+2 \int_{x_{J-1}}^{x} v(x)\partial_x v(s) \dd s.
    \end{equation*}
    Integrating over $[x_{J-2},x_{J-1}]$, we obtain
    \begin{equation*}
      \int_{x_{J-2}}^{x_{J-1}}v^2(x) \dd x =
      \dxh v^2(x_{J-1})+2 \int_{x_{J-2}}^{x_{J-1}}\int_{x_{J-1}}^{x} v(x)\partial_x v(s) \dd s \dd x.
    \end{equation*}
    This yields
    \begin{equation}
      \label{eq:teknikoo}
    \left| \int_{x_{J-2}}^{x_{J-1}} v^2(x) \dd x -  \dxh v^2(x_{J-1}) \right| \leq
    \dxh \int_{x_{J-2}}^{x_{J-1}} \left(v^2(s) + (\partial_x v)^2(s)\right)\dd s.
  \end{equation}
  
  Summing \eqref{eq:tekniko} with respect to $j$ in $\{0,\cdots,J-2\}$ and \eqref{eq:teknikoo},
  and then dividing by $L$, we obtain by triangle inequality
  \begin{equation*}
    \left|\frac{1}{L}\int_0^L v^2(x)\dd x +
      \frac{1}{L} \int_{x_{J-2}}^{x_{J-1}}v^2(x)\dd x- \frac{1}{J-1} \sum_{j=0}^{J-1} v^2(x_j)\right|
      \leq \frac{2}{L} \dxh \int_0^L \left(v^2(x) + (\partial_x v)^2(x)\right)\dd x.
    \end{equation*}
    This implies, by inverse triangle inequality,
    \begin{eqnarray*}
       \|\Pidx v\|_{\ell^2}^2 & = & \frac{1}{J} \sum_{j=0}^{J-1} v^2(x_j) \\
                     & \leq & \frac{J-1}{J} \frac{1}{J-1} \sum_{j=0}^{J-1} v^2(x_j) \\
    & \leq & \frac{J-1}{J}\, 2\, \left(1+\dxh\right)\|v\|_{H^1}^2.
    \end{eqnarray*}

  Since $(J-1)/J\leq 1$ and $\dxh=L/(J-1)$ is bounded independently of $J\geq 2$,
  this proves the lemma.
\end{proof}

\begin{remark}
  In the context of the discretization of the homogeneous Fokker--Planck equation
  \begin{equation}
    \label{eq:FPhomog}
    \partial_t u = - (-\partial_v+v)\partial_v u,
  \end{equation}
  with homogeneous Neumann boundary conditions over a finite interval $(0,L)$, one obtains
  a discrete (in velocity) problem of the form
  \begin{equation}
    \label{eq:FPhomodiscr}
    \partial_t u = \Pd u,
  \end{equation}
  where $\Pd$ typically is a nonpositive symmetric square matrix of size $J\geq 2$.
  Provided one can show that one can number the eigenvalues $(\lambda_\ell)_{0\leq \ell\leq J-1}$
  of $\Pd$ in such a way that one has
  \begin{equation}
    \label{eq:estimvpFPdiscr}
    \forall \dth,\dvh>0,\quad \forall \ell\in\{0,\cdots,J-1\},
    \qquad |1+\dth \lambda_\ell| \leq {\rm e}^{-\frac{\dth}{\dvh^2} g(\ell/J)},
  \end{equation}
  for some nonnegative continuous function $g$ over $[0,1]$ (that may now depend on $\dvh$)
  such that $g(0)=0$ and
  \begin{equation}
    \label{eq:symFPdiscr}
    \forall \dvh>0,\quad \forall j\in\{1,\cdots,\lfloor J/2\rfloor \},\qquad
    c\left(\frac{\ell}{J}\right)^2\leq g\left(\frac{\ell}{J}\right) \leq g \left(\frac{J-\ell}{J}\right),
  \end{equation}
  for some $c>0$ (that does {\it not} depend on $\dvh$), one gets an analogue of Theorem
  \ref{th:mainresult} for the explicit Euler method applied to the discretized (in velocity)
  Fokker--Planck equation \eqref{eq:FPhomodiscr} when compared to the projection of the exact
  solution of \eqref{eq:FPhomog} : the order of that method is uniform in time.
  Indeed, the error analysis is the same for all the terms and follows the same lines.
  In particular, for the terms in $\mathcal L^1$, the hypothesis \eqref{eq:estimvpFPdiscr}
  plays the role of Proposition \ref{prop:encadrevpIplusPdelta} with $g(x)=\sin^2(\pi x)$,
  and \eqref{eq:estimvpFPdiscr} ensures that an analogue of \eqref{eq:teknikexpsin} makes
  a similar result to Lemma \ref{lem:teknikexpsin} true.
\end{remark}

\subsection{Comparison with an existing longtime numerical analysis framework}
\label{subsec:comparaisonWu}

This section is devoted to explaining the reasons why the longtime analysis of the
numerical scheme \eqref{eq:discrheat} applied
to the linear heat equation with Neumann boundary conditions
\eqref{eq:heat} does not fit usual longtime numerical analysis frameworks.
We take for example the framework developed in \cite{wu1999stability}.
For readability, we use the convention, in this section, that in all the equalities and inequalities,
left-hand sides use the notations of \cite{wu1999stability} and right-hand sides correspond to
the notations of our paper.

Looking at (2.1) in \cite{wu1999stability}, we have $f\equiv 0$, $g\equiv 0$, $A=P$.
Moreover, we have $\tau=\dth$ and  $h=\dxh$.
Using the notation of (2.2), we have $B_{h,\tau}=\Id$, $C_{h,\tau} = \Id + \dth \Pd$, $g_{h,\tau}^n=0$.
In (2.4), we have $L_{h,\tau}(p_h(u(n\tau)))=(\varepsilon_1^n+\varepsilon_2^n)/\dth$
(with the notation introduced just before our error expansion formula \eqref{eq:errorheat}).
With standard regularity assumptions implied by our Hypothesis \eqref{eq:hypotheseu0}, we have
that $\|\varepsilon_2^n/\dth\|_{\ell^2}$ is a $\mathcal O(\dth)$
where the constant in the $\mathcal O$ does not depend on $n$, $\dth$ and $\dxh$
under the CFL condition \eqref{eq:CFLheat}.
However, the lack of consistency described in Section \ref{subsec:lackconsistency}
shows that $\|\Ld^1 u(n\dth)\|_{\ell^2}$ behaves as $\mathcal O(\dxh^{1/2})$
(only 2 nonzero terms of order 1 at the
boundary) and $\|\dxh^2 \Ld^2 u(n\dth)\|_{\ell^2}$ is of order $\mathcal O(\dxh^2)$,
where the constants do not depend either on $n$, $\dth$ and $\dxh$
under the CFL condition \eqref{eq:CFLheat}.
Moreover, for a general solution, these orders cannot be improved in general.
This implies that $\|\varepsilon_1^n/\dth\|_{\ell^2}$ behaves as $\mathcal O(\dxh^{1/2})$.
Using (2.6) in \cite{wu1999stability}, and the computations above, we infer that
$S_{h,\tau}\geq C\dxh^{1/2}$ for some positive $C$.
In particular, applying Theorem 2.1 of \cite{wu1999stability}
yields a uniform bound on $e^n_{h,\tau}$ that is, at best, $\mathcal O(\dxh^{1/2})$
(see relation (2.12) in \cite{wu1999stability}).
This is much coarser than our result which provides a uniform bound of size $\mathcal O(\dxh)$
(see \eqref{eq:estimmainresult} in Theorem \ref{th:mainresult}).

{\bf Ethical statement:} {G.D. is supported by the Inria project-team PARADYSE
and the Labex CEMPI (ANR-11-LABX-0007-01).
Conflict of Interest: The authors declare that they have no conflict of interest.}

\bibliographystyle{plain}
\bibliography{dl_ind_heat}

\begin{thebibliography}{10}

\bibitem{BL05}
Pavel~B. Bochev and Richard~B. Lehoucq.
\newblock On the finite element solution of the pure {N}eumann problem.
\newblock {\em SIAM Rev.}, 47:50--66, 2005.

\bibitem{CHLM22}
Claire Chainais-Hillairet, Maxime Herda, Simon Lemaire, and Julien Moatti.
\newblock Long-time behaviour of hybrid finite volume schemes for
  advection–diffusion equations: linear and nonlinear approaches.
\newblock {\em Numerische Mathematik}, 151:963--1016, 2022.

\bibitem{Stig89}
Chuan-Miao Chen, Stig Larsson, and Nai-Ying Zhang.
\newblock {Error Estimates of Optimal Order for Finite Element Methods with
  Interpolated Coefficients for the Nonlinear Heat Equation}.
\newblock {\em IMA Journal of Numerical Analysis}, 9(4):507--524, 10 1989.

\bibitem{DEGGH18}
J{\'e}r{\^o}me Droniou, Robert Eymard, Thierry Gallou{\"e}t, Cindy Guichard,
  and Rapha{\`e}le Herbin.
\newblock {\em The Gradient Discretisation Method}.
\newblock Math{\'e}matiques et Applications. Springer Cham, 2018.

\bibitem{DNS18}
J{\'e}r{\^o}me Droniou, Neela Nataraj, and Devika Shylaja.
\newblock Numerical analysis for the pure {N}eumann control problem using the
  gradient discretisation method.
\newblock {\em Computational Methods in Applied Mathematics}, 18(4):609--637,
  2018.

\bibitem{DHL2020}
Guillaume Dujardin, Fr{\'e}d{\'e}ric H{\'e}rau, and Pauline Lafitte.
\newblock Coercivity, hypocoercivity, exponential time decay and simulations
  for discrete {F}okker--{P}lanck equations.
\newblock {\em Numerische Mathematik}, 144(3):615--647, 2020.

\bibitem{HM23}
Ronald~D. Haynes and Khaled Mohammad.
\newblock Fully discrete schwarz waveform relaxation analysis for the heat
  equation on a finite spatial domain.
\newblock {\em ESAIM: M2AN}, 57(4):2397--2426, 2003.

\bibitem{IMT20}
Hussain~A. Ibdah, Cecilia~F. Mondaini, and Edriss~S. Titi.
\newblock {Fully discrete numerical schemes of a data assimilation algorithm:
  uniform-in-time error estimates}.
\newblock {\em IMA Journal of Numerical Analysis}, 40(4):2584--2625, 11 2019.

\bibitem{ILR20}
M\'elanie Inglard, Fr\'ed\'eric Lagouti\`ere, and Hans~Henrik Rugh.
\newblock Ghost solutions with centered schemes for one-dimensional transport
  equations with {Neumann} boundary conditions.
\newblock {\em Annales de la Facult\'e des sciences de Toulouse :
  Math\'ematiques}, Ser. 6, 29(4):927--950, 2020.

\bibitem{JLZ19}
Bangti Jin, Raytcho Lazarov, and Zhi Zhou.
\newblock Numerical methods for time-fractional evolution equations with
  nonsmooth data: A concise overview.
\newblock {\em Computer Methods in Applied Mechanics and Engineering},
  346:332--358, 2019.

\bibitem{KPY04}
Heinz-Otto Kreiss, N.~Anders Petersson, and Jacob Ystr\"{o}m.
\newblock Difference approximations of the {N}eumann problem for the second
  order wave equation.
\newblock {\em SIAM Journal on Numerical Analysis}, 42(3):1292--1323, 2004.

\bibitem{leveque2007finite}
Randall~J. LeVeque.
\newblock {\em Finite difference methods for ordinary and partial differential
  equations: steady-state and time-dependent problems}.
\newblock SIAM, 2007.

\bibitem{MT13}
Sylvain Maire and Etienne Tanr{\'e}.
\newblock {M}onte {C}arlo approximations of the {N}eumann problem.
\newblock {\em Monte Carlo Methods and Applications}, 19(3):201--236, 2013.

\bibitem{MT18}
Cecilia~F. Mondaini and Edriss~S. Titi.
\newblock Uniform-in-time error estimates for the postprocessing galerkin
  method applied to a data assimilation algorithm.
\newblock {\em SIAM Journal on Numerical Analysis}, 56(1):78--110, 2018.

\bibitem{sainsaulieu1996calcul}
Lionel Sainsaulieu.
\newblock {\em Calcul scientifique: cours et exercices corrigés pour le 2e
  cycle et les écoles d'ingénieurs}.
\newblock Masson, 1996.

\bibitem{SZS92}
J.~M. Sanz-Serna and A.~M. Stuart.
\newblock {A note on uniform in time error estimates for approximations to
  reaction-diffusion equations}.
\newblock {\em IMA Journal of Numerical Analysis}, 12(3):457--462, 07 1992.

\bibitem{dirichletneumann2dmatlabcode}
Suraj Shankar.
\newblock 2{D} {L}aplace equation.
\newblock
  \url{https://fr.mathworks.com/matlabcentral/fileexchange/38091-2d-laplace-equation?s_tid=FX_rc3_behav}.
\newblock Updated 10 Sep 2012.

\bibitem{Shen90}
Jie Shen.
\newblock Long time stability and convergence for fully discrete nonlinear
  galerkin methods.
\newblock {\em Applicable Analysis}, 38:201--229, 01 1990.

\bibitem{thomas1995numerical}
James~William Thomas.
\newblock {\em Numerical partial differential equations: finite difference
  methods}, volume~22.
\newblock Springer Science \& Business Media, 1995.

\bibitem{wu1999stability}
Hai-jun Wu and Rong-hua Li.
\newblock The stability and convergence of computing long-time behaviour.
\newblock {\em Journal of Computational Mathematics}, pages 397--418, 1999.

\end{thebibliography}

\Closesolutionfile{preuves}
\end{document}